\documentclass[11pt, reqno]{amsart}
\usepackage{amssymb,latexsym,amsmath,amsfonts,mathdots,enumitem}
\usepackage{latexsym}
\usepackage[mathscr]{eucal}
\usepackage{colortbl,xcolor}
\usepackage{lmodern}
\usepackage{sansmathaccent}
\usepackage[latin1]{inputenc}
\usepackage{tikz}
\usepackage{physics}
\usetikzlibrary{shapes,arrows}
\allowdisplaybreaks
\usetikzlibrary{matrix,calc,shapes,arrows,positioning}
\pdfmapfile{+sansmathaccent.map}

\numberwithin{equation}{section}
\theoremstyle{plain}
\newtheorem{exam}{Example}

\newtheorem{thm}{Theorem}[section]

\newtheorem{lem}[thm]{Lemma}
\newtheorem{prop}[thm]{Proposition}
\theoremstyle{definition}
\newtheorem{defn}[thm]{Definition}
\newtheorem{rem}[thm]{Remark}

\numberwithin{equation}{section}

\def\beq{\begin{eqnarray}}
	\def\eeq{\end{eqnarray}}
\def\beqa{\begin{eqnarray*}}
	\def\eeqa{\end{eqnarray*}}

\def\beqn{\begin{equation}}
	\def\eeqn{\end{equation}}

\def\mg#1{}

\renewcommand{\epsilon}{\varepsilon}
\renewcommand{\phi}{\varphi}

\renewcommand{\bf}[1]{\textbf{#1}}
\renewcommand{\it}[1]{\textit{#1}}
\renewcommand{\sc}[1]{\textsc{#1}}
\renewcommand{\sf}[1]{\textsf{#1}}

\numberwithin{equation}{section}
\allowdisplaybreaks[4] 

\setlist[enumerate]{font=\upshape,noitemsep, topsep=0pt} 
\setlist[itemize]{noitemsep, topsep=0pt}

\begin{document}
	
	\title{On the Dilation Theory and Canonical Decomposition of $\mathbf{\Theta}_n$-Contractions }
	\author{Aparna Gupta, \, Avijit Pal, \, and \, Bhaskar Paul}
	\subjclass[2010]{47A15, 47A20, 47A25, 47A45}
	
	\keywords{$\mathbf{\Theta}_n$-contraction, $\mathbf{\Theta}_n$-unitary, $\mathbf{\Theta}_n$-isometry, Conditional dilation, $\mu$-synthesis domains, Distinguish boundary, Completely non-unitary contraction}
	
	\maketitle
\begin{abstract}
This paper studies the domain $\mathbf{\Theta}_n$ from the perspective of operator theory.  We obtain several characterizations of $\mathbf{\Theta}_n$-contractions (respectively, $\mathbf{\Theta}_n$-unitaries and $\mathbf{\Theta}_n$-isometries) and establish their relationships with $\Gamma_n$-contractions (respectively, $\Gamma_n$-unitaries and $\Gamma_n$-isometries), tetrablock contractions (respectively, tetrablock unitaries and tetrablock isometries), and $\mathbf{\Theta}_{n+1}$-contractions (respectively, $\mathbf{\Theta}_{n+1}$-unitaries and $\mathbf{\Theta}_{n+1}$-isometries). We prove that every $\mathbf{\Theta}_n$-contraction admits a canonical decomposition into the direct sum of a $\mathbf{\Theta}_n$-unitary and a completely non-unitary $\mathbf{\Theta}_n$-contraction. We further develop a dilation theory for $\mathbf{\Theta}_n$-contractions by obtaining necessary and sufficient conditions for the existence of minimal $\mathbf{\Theta}_n$-isometric dilations. As an application, we show that the minimal $\Gamma_n$-isometric dilation arises as a special case of the minimal $\mathbf{\Theta}_n$-isometric dilation. Finally, we identify a class of $\mathbf{\Theta}_2$-contractions that always admit $\mathbf{\Theta}_2$-isometric extensions.
\end{abstract}	
	
\section{Introduction}\label{Intro}
The symmetrized polydisc has emerged as one of the central domains in multivariable operator theory. Since the pioneering work of Agler and Young, it has provided a natural framework in which complex geometry, spectral set theory, dilation theory, function theory, and operator models interact. Over the last two decades, this interaction has led to remarkable developments, including canonical models, functional calculi, von Neumann type inequalities, and dilation theorems for $\Gamma_n$-contractions.
Motivated by these developments, several domains closely related to the symmetrized polydisc have recently been introduced in an attempt to extend this operator-theoretic framework. Among them is the family of generalized symmetrized domains $\mathbf{\Theta}_n$, introduced in \cite{Biswas 2,Ghosh}. These domains contain the symmetrized polydisc as a special case and share many of its geometric features. Their function theory has already begun to emerge, but very little is known from the perspective of operator theory.

Thus, the present paper extends several fundamental aspects of the operator theory of the symmetrized polydisc to the considerably broader family of generalized symmetrized domains. Our main objective is to develop the operator theory associated with $\mathbf{\Theta}_n$, including characterizations of $\mathbf{\Theta}_n$-unitaries and $\mathbf{\Theta}_n$-isometries, canonical decomposition theorems, fundamental operator tuples, and dilation theory. In this sense, the present work may be viewed as a natural continuation of the operator theory of the symmetrized polydisc.

Let $\mathbb D$ denote the open unit disc in $\mathbb C$. The symmetrization map
$
\mathbf s=(s_1,\ldots,s_n):\mathbb C^n\to\mathbb C^n$
is defined by
\[
s_i(z_1,\ldots,z_n)
=
\sum_{1\le k_1<\cdots<k_i\le n}
z_{k_1}\cdots z_{k_i},
\qquad
1\le i\le n.
\]
Its image
$
G_n=\mathbf s(\mathbb D^n)$
is the symmetrized polydisc, while
$
\Gamma_n=\mathbf s(\overline{\mathbb D}^{\,n})$
denotes its closure.
A natural generalization of the symmetrization map was introduced in \cite{Ghosh}. Let $p$ be a fixed divisor of $m$, let $\theta_0=1$, and define
\[
\theta_i(z_1,\ldots,z_n)
=
s_i(z_1^m,\ldots,z_n^m),
\qquad
1\le i\le n-1,
\]
and
\[
\theta_n(z_1,\ldots,z_n)
=
(z_1\cdots z_n)^q,
\qquad
q=\frac{m}{p}.
\]
The images of $\mathbb D^n$ and $\overline{\mathbb D}^{\,n}$ under
$
\theta=(\theta_1,\ldots,\theta_n)$
are denoted by $\mathbf{\Theta}_n$ and $\overline{\mathbf{\Theta}}_n$, respectively. Observe that
$
\mathbf{\Theta}_1=\mathbb D,$
and that
$
\mathbf{\Theta}_n=G_n$
when $m=p=1$. It was shown in \cite{Biswas 2,Ghosh} that
$
(\theta_1,\ldots,\theta_n)\in\overline{\mathbf{\Theta}}_n$
if and only if every zero of
\begin{equation}\label{P}
P(z)
=
z^n-\theta_1z^{n-1}
+\cdots
+(-1)^n\theta_n^p
\end{equation}
lies in $\overline{\mathbb D}$.

A fundamental tool for studying operator tuples associated with a complex domain is the notion of a spectral set. Let $\Omega\subset\mathbb C^n$ be compact and let $\mathcal O(\Omega)$ denote the algebra of functions that are holomorphic on a neighbourhood of $\Omega$. A commuting tuple
$
\mathbf T=(T_1,\ldots,T_n)$
is said to have $\Omega$ as a spectral set if
$
\sigma(\mathbf T)\subseteq\Omega$
and
\[
\|f(\mathbf T)\|
\le
\sup_{z\in\Omega}|f(z)|
\]
for every $f\in\mathcal O(\Omega)$.
The importance of spectral sets originates in the celebrated theorem of von Neumann, which asserts that the closed unit disc is a spectral set for every contraction.
\begin{thm}[von Neumann {\cite[Chapter~1, Corollary~1.2]{Nagy}}]
Let $T$ be a contraction. Then
\[
\|p(T)\|
\le
\sup_{|z|\le1}|p(z)|
\]
for every polynomial $p$.
\end{thm}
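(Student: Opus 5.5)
The plan is to prove von Neumann's inequality through the Sz.-Nagy unitary dilation, with the spectral theorem for unitaries doing the actual estimate.

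\emph{Step 1: construct a unitary dilation.} Let $T$ act on $\mathcal H$. I would first build an isometric dilation and then a unitary one.

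Put $D_T=(I-T^*T)^{1/2}$. This is well defined because $\|T\|\le 1$ gives $I-T^*T\ge 0$.

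On $\mathcal K_+=\mathcal H\oplus\mathcal H\oplus\cdots$ define
\[
V(h_0,h_1,h_2,\ldots)=(Th_0,\,D_Th_0,\,h_1,\,h_2,\ldots).
\]
The identity $\|Th_0\|^2+\|D_Th_0\|^2=\|h_0\|^2$ shows that $V$ is an isometry. Since the first coordinate of $V$ depends only on $h_0$, we get $P_{\mathcal H}V^k|_{\mathcal H}=T^k$ for all $k\ge 0$.

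Next, extend $V$ to a unitary $U$ on a larger space $\mathcal K\supseteq\mathcal K_+$. One concrete choice is the Sch\"affer matrix on $\ell^2(\mathbb Z,\mathcal H)$, whose central entries are
\[
\begin{pmatrix} T & D_{T^*}\\ D_T & -T^*\end{pmatrix}.
\]
This uses the intertwining relation $TD_T=D_{T^*}T$. Alternatively, one can take the Wold decomposition of $V$ and extend its shift part to a bilateral shift.

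Either way, $\mathcal K_+$ is invariant under $U$ and $U|_{\mathcal K_+}=V$. Hence
\[
P_{\mathcal H}U^k|_{\mathcal H}=T^k \quad\text{for all } k\ge 0,
\]
and by linearity $p(T)=P_{\mathcal H}\,p(U)|_{\mathcal H}$ for every polynomial $p$.

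\emph{Step 2: estimate $p(U)$.} By the spectral theorem, $U$ is unitary with $\sigma(U)\subseteq\mathbb T$. Therefore $p(U)$ is normal and
\[
\|p(U)\|=\sup_{z\in\sigma(U)}|p(z)|\le\sup_{|z|=1}|p(z)|.
\]
By the maximum modulus principle, the right-hand side equals $\sup_{|z|\le1}|p(z)|$.

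\emph{Step 3: conclude.} Compressing to $\mathcal H$ does not increase the norm, so
\[
\|p(T)\|\le\|p(U)\|\le\sup_{|z|\le1}|p(z)|.
\]

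\emph{Main obstacle.} The only real work is checking that the Sch\"affer operator is unitary. The entries $V^*V=I$ and $VV^*=I$ both reduce to the identities $TD_T=D_{T^*}T$ and $T^*T+D_T^2=I$.

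The intertwining identity itself follows by writing $D_T=f(T^*T)$ and $D_{T^*}=f(TT^*)$ with $f(x)=(1-x)^{1/2}$. Then $T\,q(T^*T)=q(TT^*)\,T$ holds for every polynomial $q$, and approximating $f$ uniformly by polynomials on $[0,1]$ gives the identity.
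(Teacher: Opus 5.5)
Your proof is correct and follows the standard route. The paper does not prove this statement itself but quotes it from the literature, where von Neumann's inequality is obtained as a corollary of Sz.-Nagy's unitary dilation theorem exactly as you do: write $p(T)=P_{\mathcal H}\,p(U)|_{\mathcal H}$ for a unitary dilation $U$ (e.g.\ Sch\"affer's), then use $\|p(U)\|=\sup_{z\in\sigma(U)}|p(z)|\le\sup_{|z|\le1}|p(z)|$.

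There is one notational slip. In your last paragraph the identities to verify are $U^*U=I$ and $UU^*=I$ for the Sch\"affer unitary $U$, since your $V$ on $\mathcal K_+$ is only an isometry. Also, $UU^*=I$ additionally uses $TT^*+D_{T^*}^2=I$ and the adjoint relation $D_TT^*=T^*D_{T^*}$.
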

Closely related to this result is the dilation theorem of Sz.-Nagy, which states that every contraction admits a unitary power dilation.
\begin{thm}[Sz.-Nagy {\cite{paulsen}}]
Every contraction admits a unitary power dilation.
\end{thm}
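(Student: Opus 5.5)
We must prove Sz.-Nagy's theorem: for every contraction $T$ on a Hilbert space $\mathcal H$ there are a Hilbert space $\mathcal K\supseteq\mathcal H$ and a unitary $U$ on $\mathcal K$ with $T^n=P_{\mathcal H}U^n|_{\mathcal H}$ for all $n\ge 0$. The plan is the classical two-step construction: first build an isometric dilation by hand, then extend the isometry to a unitary. Throughout, $D_T=(I-T^*T)^{1/2}$ and $D_{T^*}=(I-TT^*)^{1/2}$ denote the defect operators; these are positive since $\|T\|\le 1$.

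\textbf{Step 1 (isometric dilation).} Let $\mathcal K_+=\mathcal H\oplus\mathcal H\oplus\cdots=\ell^2(\mathbb N,\mathcal H)$ and define
\[
V(h_0,h_1,h_2,\ldots)=(Th_0,\,D_Th_0,\,h_1,\,h_2,\ldots).
\]
We have $\|Th_0\|^2+\|D_Th_0\|^2=\langle (T^*T+I-T^*T)h_0,h_0\rangle=\|h_0\|^2$, so $V$ is an isometry. Identify $\mathcal H$ with the first coordinate. The space $\{0\}\oplus\mathcal H\oplus\cdots$ is invariant under $V$, so $\mathcal H$ is co-invariant. Equivalently, $V^*\mathcal H\subseteq\mathcal H$ with $V^*|_{\mathcal H}=T^*$. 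By induction this gives $P_{\mathcal H}V^n|_{\mathcal H}=T^n$: the first coordinate of $V^n(h,0,\ldots)$ is $T^nh$.

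\textbf{Step 2 (unitary extension of an isometry).} Let $V$ be an isometry on $\mathcal K_+$. Set $\mathcal K=\mathcal K_+\oplus\mathcal K_+$ and define
\[
U=\begin{pmatrix} V & I-VV^*\\ 0 & V^*\end{pmatrix}.
\]
A direct check using $V^*V=I$, together with the fact that $I-VV^*$ is the projection onto $\ker V^*$, gives $U^*U=UU^*=I$. Since $U$ is upper triangular, $P_{\mathcal K_+}U^n|_{\mathcal K_+}=V^n$. Compressing further to $\mathcal H$ yields $P_{\mathcal H}U^n|_{\mathcal H}=P_{\mathcal H}V^n|_{\mathcal H}=T^n$ for all $n\ge0$, which proves the theorem.

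\textbf{Main obstacle.} The only delicate point is the unitarity verification in Step 2. The off-diagonal terms are $V^*(I-VV^*)=0$ and $(I-VV^*)V=0$. The corner computations need $(I-VV^*)^2+VV^*=I$ and $V^*V=I$, and both hold because $I-VV^*$ is a projection. Everything else is bookkeeping.

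If the minimal dilation is wanted, it is obtained by restricting $U$ to $\bigvee_{n\in\mathbb Z}U^n\mathcal H$.
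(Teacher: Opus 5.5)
Your proof is correct and complete. $V$ is an isometry by the defect identity, and the first coordinate of $V^n(h,0,0,\ldots)$ is $T^nh$. $U$ is unitary by exactly the computations you list. Since $\mathcal K_+\oplus\{0\}$ is invariant under $U$ with $U|_{\mathcal K_+}=V$, compressing $U^n$ to $\mathcal H$ returns $T^n$, and negative powers follow by taking adjoints.

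The paper gives no proof of this statement; it is quoted as background from Paulsen's book, so there is no internal argument to match. If I recall that source correctly, it obtains the theorem operator-algebraically rather than by your explicit construction. One checks that $p+\bar q\mapsto p(T)+q(T)^*$ is a positive map on $C(\mathbb T)$, using $(I-re^{it}T)^{-1}+(I-re^{-it}T^*)^{-1}-I=(I-re^{-it}T^*)^{-1}(I-r^2T^*T)(I-re^{it}T)^{-1}\ge 0$ for $0\le r<1$. Positivity is automatically complete positivity on the commutative algebra $C(\mathbb T)$. Stinespring's theorem then gives a representation $\pi$, and $U=\pi(z)$ is the required unitary.

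Your route is elementary and constructive. Your Step 1 is precisely the Sch\"affer isometry the paper itself uses for $V_n$ in Theorem~\ref{Conditional Dilation}. There the tail is $\ell^2(\mathcal D_{T_n})$ rather than your $\ell^2(\mathcal H)$, which makes that isometric dilation minimal.

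The Stinespring route is less concrete, but it is the one that carries over to several variables. Combined with Arveson's extension theorem, it shows that a normal dilation with spectrum in the distinguished boundary exists exactly when the set is a complete spectral set. That is the real issue behind the paper's open problem on $\mathbf{\Theta}_n$-isometric lifts. An explicit matrix construction like yours only works once suitable operators, such as the $A^{(i)}_k$ of Theorem~\ref{Conditional Dilation}, are available.
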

These two theorems form the foundation of modern dilation theory and have inspired analogous theories for many domains, including the symmetrized polydisc, the tetrablock, and the pentablock. It is therefore natural to ask whether the generalized domains $\mathbf{\Theta}_n$ admit a similar operator theory.
Following \cite{Biswas 2}, we introduce the basic operator classes associated with $\mathbf{\Theta}_n$: $\mathbf{\Theta}_n$-contractions, $\mathbf{\Theta}_n$-unitaries, $\mathbf{\Theta}_n$-isometries, $\mathbf{\Theta}_n$-co-isometries, and pure $\mathbf{\Theta}_n$-isometries.
\begin{defn}\label{Defition 1}
\begin{enumerate}
\item  Let $\mathbf{T}=(T_1,\ldots,T_n)$ be a commuting $n$-tuple of bounded operators on a Hilbert space $\mathcal H$. We say that $\mathbf{T}$ is a \emph{$\mathbf{\Theta}_n$-contraction} if $\overline{\mathbf{\Theta}}_n$ is a spectral set for $\mathbf{T}.$
\item Let $\mathbf{N}=(N_1,\ldots,N_n)$ be a commuting $n$-tuple of bounded normal operators on a Hilbert space $\mathcal H$. We say that $\mathbf{N}$ is a \emph{$\mathbf{\Theta}_n$-unitary} if its joint spectrum is contained in the distinguished boundary $b\mathbf{\Theta}_n$.

\item Let $\mathbf{V}=(V_1,\ldots,V_n)$ be a commuting $n$-tuple of bounded operators on a Hilbert space $\mathcal H$. We say that $\mathbf{V}$ is a \emph{$\mathbf{\Theta}_n$-isometry} if there exist a Hilbert space $\mathcal K \supseteq \mathcal H$ and a $\mathbf{\Theta}_n$-unitary $\mathbf{N}=(N_1,\ldots,N_n)$ on $\mathcal K$ such that
$
V_i=N_i|_{\mathcal H}, 1\le i\le n.$
We say that $\mathbf{V}$ is a \emph{$\mathbf{\Theta}_n$-co-isometry} if
$
\mathbf{V}^*=(V_1^*,\ldots,V_n^*)$
is a $\mathbf{\Theta}_n$-isometry. A $\mathbf{\Theta}_n$-isometry $\mathbf{V}$ is said to be a \emph{pure $\mathbf{\Theta}_n$-isometry} if $V_n$ is a pure isometry.

\end{enumerate}
\end{defn}
A contraction $T$ on a Hilbert space $\mathcal H$ is said to be \emph{completely non-unitary} (briefly, \emph{c.n.u.}) if it has no non-trivial reducing subspace on which it acts as a unitary operator. The unilateral shift is the standard example of a c.n.u. contraction. 
This leads to the following definition. \begin{defn}\label{c.n.u.} A $\mathbf{\Theta}_n$-contraction $\mathbf T=(T_1,\ldots,T_n)$ is called a \emph{c.n.u. $\mathbf{\Theta}_n$-contraction} if $T_n$ is a completely non-unitary contraction. \end{defn} 
One of the principal tools in the study of $\Gamma_n$-contractions is the theory of fundamental operators \cite{S. Pal, A. Pal}. These operators encode the deviation of a $\Gamma_n$-contraction from being a $\Gamma_n$-isometry and play a central role in the construction of functional models, canonical decompositions, and dilation theorems. A natural question is whether an analogous theory exists for $\mathbf{\Theta}_n$-contractions. This question motivates the system of operator equations introduced below. To introduce these equations, let $T$ be a contraction on a Hilbert space $\mathcal H$. The associated defect operator and defect space are defined by $ D_T=(I-T^*T)^{1/2}, ~\mathcal D_T=\overline{\operatorname{Ran}}D_T, $
respectively. For a $\Gamma_n$-contraction $(S_1,\ldots,S_n)$, the fundamental equations are 
\begin{equation}\label{Gamma_n Fundamental} \begin{aligned} S_i-S_{n-i}^*S_n=D_{S_n}E_iD_{S_n},S_{n-i}-S_i^*S_n=D_{S_n}E_{n-i}D_{S_n}, \quad E_i,E_{n-i}\in\mathcal B(\mathcal D_{S_n}), \end{aligned} \end{equation} 
for $1\le i\le n-1$.  Motivated by this observation, we introduce the following system of operator equations associated with a $\mathbf{\Theta}_n$-contraction.
\begin{equation}\label{Fundamental} \begin{aligned} T_i-T_{n-i}^*T_n^p &= \sum_{l=0}^{p-1} T_n^{*\,p-1-l} D_{T_n}A_l^{(i)}D_{T_n} T_n^{p-1}  + \sum_{l=0}^{p-2} T_n^{*\,p-2-l} D_{T_n}A_l^{(i)}D_{T_n} T_n^{p-2} +\cdots \\ & + \sum_{l=0}^{1} T_n^{*\,1-l} D_{T_n}A_l^{(i)}D_{T_n} T_n + D_{T_n}A_0^{(i)}D_{T_n}, \qquad 1\le i\le n-1. \end{aligned} \end{equation} 
We refer to \eqref{Fundamental} as the \emph{fundamental equations} of $\mathbf T$, and $$(A_0^{(i)},\ldots,A_p^{(i)}), ~~1\le i\le n-1, $$ are the  \emph{tuples  of fundamental operators} of $\mathbf T$. Observe that when $m=p=1$, these equations reduce precisely to the fundamental equations for $\Gamma_n$-contractions. It was shown in  \cite[Theorem 4.4]{A. Pal} that the fundamental operators for $\Gamma_n$-contractions are unique.   The existence and uniqueness theorem for $\Gamma_n$-contractions  naturally raises the question of whether an analogous result holds for the broader class of $\mathbf{\Theta}_n$-contractions. Since the fundamental operators of a $\Gamma_n$-contraction are uniquely determined, it is natural to ask whether the same phenomenon holds for $\mathbf{\Theta}_n$-contractions. More precisely, are the operator tuples
$$
(A_0^{(i)},\ldots,A_p^{(i)}),  1\le i\le n-1,$$
uniquely determined by the $\mathbf{\Theta}_n$-contraction? To the best of our knowledge, this problem remains open.

Let $\mathcal E$ denotes a separable Hilbert space, and $\mathcal B(\mathcal E)$ denotes the algebra of bounded linear operators on $\mathcal E$. We write $H^2(\mathcal E)$ for the Hardy space of analytic $\mathcal E$-valued functions on the unit disc $\mathbb D$, and $L^2(\mathcal E)$ for the Hilbert space of square-integrable $\mathcal E$-valued functions on the unit circle $\mathbb T$. Furthermore, $H^\infty(\mathcal B(\mathcal E))$ and $L^\infty(\mathcal B(\mathcal E))$ denote the spaces of bounded analytic and bounded measurable $\mathcal B(\mathcal E)$-valued functions, respectively. For $\phi\in L^\infty(\mathcal B(\mathcal E))$, the Toeplitz operator with symbol $\phi$ is defined by \[ T_\phi f=P_+(\phi f), \qquad f\in H^2(\mathcal E), \] where $P_+:L^2(\mathcal E)\to H^2(\mathcal E)$ is the orthogonal projection. In particular, $T_z=M_z$ is the unilateral shift and $T_{\bar z}=M_z^*$ is the backward shift on $H^2(\mathcal E)$. For convenience, throughout the paper we write
\[
k(i)=\binom{n-1}{i}+\binom{n-1}{n-i},
\quad
\gamma_i=\frac{n-i}{n},\quad 1\leq i \leq n-1.
\]
The paper is organized as follows: In Section~\ref{Char}, we establish several characterizations and structural properties of $\mathbf{\Theta}_n$-contractions. Section~\ref{Unitary, Isometry} is devoted to the study of $\mathbf{\Theta}_n$-unitaries and $\mathbf{\Theta}_n$-isometries, where we also investigate their connections with $\Gamma_n$-, $\mathbf{\Theta}_{n+1}$-, and $\mathbb{E}$-unitaries and isometries. In Section~\ref{Decomposition}, we prove a canonical decomposition theorem for $\mathbf{\Theta}_n$-contractions. The final section develops the dilation theory of $\mathbf{\Theta}_n$-contractions, culminating in our main results on $\mathbf{\Theta}_n$-isometric dilations. We establish necessary conditions for the existence of a
$\mathbf{\Theta}_n$-isometric dilation.
These results strongly suggest that $\mathbf{\Theta}_n$-isometric lifts should exist in complete generality. This leads to the following fundamental open problem.

\medskip
\noindent
\textbf{Open Problem.} Does every $\mathbf{\Theta}_n$-contraction admit a $\mathbf{\Theta}_n$-isometric lift for arbitrary integers $m\ge2$, $p\ge1$, and $n\ge2$?

\section{Characterizations of $\mathbf{\Theta}_n$-Contractions}\label{Char}

The generalized symmetrized domains $\mathbf{\Theta}_n$, the symmetrized polydisc, and the tetrablock are closely related (see \cite[Lemmas~2.1--2.8]{Paul}). These relationships provide valuable insights into the operator theory associated with these domains. In this section, we establish several characterizations of $\mathbf{\Theta}_n$-contractions and investigate their connections with $\Gamma_n$-contractions, $\mathbf{\Theta}_{n+1}$-contractions, and $\mathbb{E}$-contractions. We also introduce certain operator-valued functions on $\mathcal H$ that play a key role in the proofs of the main results. We begin with a useful characterization of the symmetrized polydisc. The following lemma is a slight extension of \cite[Lemma~2.3]{S. Pal}. Since its proof is identical to that of the equivalence $(1)\Leftrightarrow(3)$ in \cite[Lemma~2.3]{S. Pal}, we omit it.

\begin{lem}\label{Lem Gamma_n}
Let $(s_1,\dots,s_n)\in\mathbb C^n$. Then the following are equivalent:
\begin{enumerate}
\item $(s_1,\dots,s_n)\in\Gamma_n$.

\item $(\alpha s_1,\dots,\alpha^n s_n)\in\Gamma_n$ for every $\alpha\in\overline{\mathbb D}$.
\end{enumerate}
\end{lem}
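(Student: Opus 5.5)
The plan is to argue directly from the definition $\Gamma_n=\mathbf s(\overline{\mathbb D}^{\,n})$ and the homogeneity of the elementary symmetric polynomials. The implication (2)$\Rightarrow$(1) is immediate: take $\alpha=1\in\overline{\mathbb D}$ in (2).

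For (1)$\Rightarrow$(2), suppose $(s_1,\dots,s_n)\in\Gamma_n$. Choose $(z_1,\dots,z_n)\in\overline{\mathbb D}^{\,n}$ with $s_i=s_i(z_1,\dots,z_n)$ for $1\le i\le n$. Fix $\alpha\in\overline{\mathbb D}$. The polynomial $s_i$ is homogeneous of degree $i$, since each monomial $z_{k_1}\cdots z_{k_i}$ has degree $i$. Hence
\[
s_i(\alpha z_1,\dots,\alpha z_n)=\alpha^i s_i(z_1,\dots,z_n)=\alpha^i s_i,\qquad 1\le i\le n.
\]
Because $|\alpha z_j|\le|z_j|\le1$, the point $(\alpha z_1,\dots,\alpha z_n)$ lies in $\overline{\mathbb D}^{\,n}$. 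Therefore
\[
(\alpha s_1,\alpha^2 s_2,\dots,\alpha^n s_n)=\mathbf s(\alpha z_1,\dots,\alpha z_n)\in\Gamma_n.
\]

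An equivalent route uses the root characterization. The point $(s_1,\dots,s_n)$ lies in $\Gamma_n$ exactly when all roots of $z^n-s_1z^{n-1}+\cdots+(-1)^ns_n$ lie in $\overline{\mathbb D}$. Replacing $s_i$ by $\alpha^is_i$ replaces this polynomial by one whose roots are $\alpha\lambda_j$, where the $\lambda_j$ are the original roots. This version is not needed, since the direct argument above is complete.

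No step is a real obstacle. The only point to verify is the degree-$i$ homogeneity of $s_i$, which is immediate from the defining formula.
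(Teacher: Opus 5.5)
Your proof is correct and is essentially the standard argument the paper defers to by citation (it omits the proof as identical to the cited lemma of Bisai--Pal). You write the point as $\mathbf s(z_1,\dots,z_n)$ with $z_j\in\overline{\mathbb D}$, use the degree-$i$ homogeneity of $s_i$ to identify $(\alpha s_1,\dots,\alpha^n s_n)$ with $\mathbf s(\alpha z_1,\dots,\alpha z_n)\in\Gamma_n$, and take $\alpha=1$ for the converse.
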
	
\begin{lem}\label{Lem 0}
Let $(\theta_1,\ldots,\theta_n)\in\overline{\mathbf{\Theta}}_n$. Then
\[
\left(\alpha^i\frac{\theta_i}{k(i)},
      \alpha^{n-i}\frac{\theta_{n-i}}{k(i)},
      \alpha^n\theta_n^p\right)
      \in\overline{\mathbb E}
\]
for every $\alpha\in\overline{\mathbb D}$ and $1\le i\le n-1$.
\end{lem}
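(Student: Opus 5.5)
The plan is to reduce the statement to the symmetrized polydisc and then pass from $\Gamma_n$ to the tetrablock.

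\emph{Step 1: reduction to $\Gamma_n$.} Let $(\theta_1,\ldots,\theta_n)\in\overline{\mathbf{\Theta}}_n$. By the characterization of $\overline{\mathbf{\Theta}}_n$ recalled in the introduction, every zero of the polynomial $P$ in \eqref{P} lies in $\overline{\mathbb D}$. The coefficients of $P$ are, up to sign, $\theta_1,\ldots,\theta_{n-1},\theta_n^p$. So if $\lambda_1,\ldots,\lambda_n\in\overline{\mathbb D}$ are the zeros of $P$, Vieta's formulas give
\[
(\theta_1,\ldots,\theta_{n-1},\theta_n^p)=\mathbf s(\lambda_1,\ldots,\lambda_n)\in\Gamma_n .
\]

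\emph{Step 2: scaling.} Apply Lemma \ref{Lem Gamma_n} to the point $(\theta_1,\ldots,\theta_{n-1},\theta_n^p)$. For every $\alpha\in\overline{\mathbb D}$ this gives
\[
(s_1,\ldots,s_n):=(\alpha\theta_1,\alpha^2\theta_2,\ldots,\alpha^{n-1}\theta_{n-1},\alpha^n\theta_n^p)\in\Gamma_n .
\]
Concretely, $s_j=\mathbf s_j(\alpha\lambda_1,\ldots,\alpha\lambda_n)$. It therefore suffices to prove the following $\Gamma_n$-statement: for every $(s_1,\ldots,s_n)\in\Gamma_n$ and every $1\le i\le n-1$,
\[
\left(\frac{s_i}{k(i)},\frac{s_{n-i}}{k(i)},s_n\right)\in\overline{\mathbb E}.
\]
This is the type of relation between $\Gamma_n$ and the tetrablock collected in \cite[Lemmas~2.1--2.8]{Paul}, and I would invoke it directly from there. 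Substituting $s_i=\alpha^i\theta_i$, $s_{n-i}=\alpha^{n-i}\theta_{n-i}$ and $s_n=\alpha^n\theta_n^p$ then yields exactly the claim.

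\emph{Step 3: self-contained proof of the $\Gamma_n$-statement.} I would use the standard characterization
\[
(x_1,x_2,x_3)\in\overline{\mathbb E}\iff (x_1+zx_2,\,zx_3)\in\Gamma_2\ \text{ for all } z\in\mathbb T,
\]
equivalently $|x_1+zx_2-\overline{(x_1+zx_2)}\,zx_3|\le 1-|x_3|^2$ together with $|x_1+zx_2|\le 2$. For the point in Step 2 this requires
\[
\left(\frac{s_i+zs_{n-i}}{k(i)},\,z s_n\right)\in\Gamma_2,\qquad z\in\mathbb T,\quad s=\mathbf s(w),\ w\in\overline{\mathbb D}^{\,n}.
\]
By continuity and the maximum principle it is enough to treat $w\in\mathbb T^n$.

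On the torus $\overline{s_n}\,s_j=\overline{s_{n-j}}$, since $\bar w_k=1/w_k$. Hence the quantity $x_1-\bar x_1x_3$ vanishes identically, and only $|x_1|\le 2$, i.e.\ $|s_i+zs_{n-i}|\le 2k(i)$, needs checking. For $w\in\mathbb T^n$ each of $s_i$ and $s_{n-i}$ is a sum of at most $\binom ni$ unimodular terms, and $\binom ni=\binom{n-1}{i}+\binom{n-1}{i-1}=\binom{n-1}{i}+\binom{n-1}{n-i}=k(i)$. This gives $|s_i+zs_{n-i}|\le 2k(i)$, as required.

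\emph{Main obstacle.} The delicate point is the reduction to the torus in Step 3. One must confirm that the $\Gamma_2$-membership condition is preserved when passing from boundary points to interior points of $\overline{\mathbb D}^{\,n}$. My first attempt would be to fix $z\in\mathbb T$ and $w_2,\ldots,w_n$, and treat the map
\[
w_1\mapsto \left(\frac{s_i+zs_{n-i}}{k(i)},\,zs_n\right)
\]
as an affine map into $\mathbb C^2$. Polynomial convexity of $\Gamma_2$, through its Schur--Cohn description, would then let one apply the maximum principle one variable at a time. If this becomes messy, I would instead cite the corresponding lemma of \cite{Paul} directly.
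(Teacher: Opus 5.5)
Your Steps 1--2 followed by citing the $\Gamma_n$-to-$\overline{\mathbb E}$ relation from \cite{Paul} are exactly the paper's proof: pass to $(\theta_1,\ldots,\theta_{n-1},\theta_n^p)\in\Gamma_n$ via \cite[Lemma~2.2]{Paul}, scale by Lemma~\ref{Lem Gamma_n}, then apply \cite[Lemma~2.8]{Paul}. Your optional Step 3 is also sound, and the obstacle you flag is not a real gap: $k(i)=\binom{n}{i}$, the identity $\overline{s_n}\,s_j=\overline{s_{n-j}}$ on $\mathbb T^n$ makes the Schur--Cohn term vanish, and the passage from $\mathbb T^n$ to $\overline{\mathbb D}^{\,n}$ follows from polynomial convexity of $\Gamma_2$ applied to the polynomial map $w\mapsto\bigl((s_i(w)+zs_{n-i}(w))/k(i),\,zs_n(w)\bigr)$.
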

\begin{proof}
Since $(\theta_1,\ldots,\theta_n)\in\overline{\mathbf{\Theta}}_n$, it follows from \cite[Lemma~2.2]{Paul} that
$
(\theta_1,\ldots,\theta_{n-1},\theta_n^p)\in\Gamma_n.$
Applying Lemma~\ref{Lem Gamma_n}, we obtain
$
(\alpha\theta_1,\ldots,\alpha^{n-1}\theta_{n-1},
\alpha^n\theta_n^p)\in\Gamma_n$
for every $\alpha\in\overline{\mathbb D}$. The conclusion now follows immediately from \cite[Lemma~2.8]{Paul}.
\end{proof}
\begin{rem}
It follows from the characterization of the closed tetrablock
$\overline{\mathbb E}$ (see \cite[Theorem~2.4]{Abouhajar}) that the rational functions
\begin{equation}\label{Psi, Upsilon}
\Psi(z,x_1,x_2,x_3)
=
\frac{x_1-zx_3}{1-zx_2},
\quad
\Upsilon(z,x_1,x_2,x_3)
=
\frac{x_2-zx_3}{1-zx_1},
\end{equation}
are contractive on $\overline{\mathbb E}\times\mathbb D$. Consequently, Lemma~\ref{Lem 0} implies that
\[
\left|
\frac{\alpha^i\theta_i-zk(i)\alpha^n\theta_n^p}
{k(i)-z\alpha^{n-i}\theta_{n-i}}
\right|
\le1
\quad 
\text{and}\quad
\left|
\frac{\alpha^{n-i}\theta_{n-i}-zk(i)\alpha^n\theta_n^p}
{k(i)-z\alpha^i\theta_i}
\right|
\le1,
\]
for every $(\theta_1,\ldots,\theta_n)\in\overline{\mathbf{\Theta}}_n$,
$\alpha\in\overline{\mathbb D}$, $z\in\mathbb D$, and
$1\le i\le n-1$.
Since $\Psi$ and $\Upsilon$ are continuous on
$\overline{\mathbb E}\times\overline{\mathbb D}$, the above inequalities remain valid for every $z\in\overline{\mathbb D}$. In particular, taking $z=1$, we obtain
\[
\left|
\frac{\alpha^i\theta_i-k(i)\alpha^n\theta_n^p}
{k(i)-\alpha^{n-i}\theta_{n-i}}
\right|
\le1
\quad
\text{and}\quad
\left|
\frac{\alpha^{n-i}\theta_{n-i}-k(i)\alpha^n\theta_n^p}
{k(i)-\alpha^i\theta_i}
\right|
\le1.
\]
\end{rem}
The following result is an immediate consequence of the definition of a
$\mathbf{\Theta}_n$-contraction, and hence its proof is omitted.	
\begin{prop}\label{Estimate T_i, T_n}
Let $(T_1,\ldots,T_n)$ be a $\mathbf{\Theta}_n$-contraction. Then
\[
\|T_i\|\le k(i), \quad 1\le i\le n-1, \quad \text{and}\quad
\|T_n\|\le1.
\]
\end{prop}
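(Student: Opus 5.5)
The plan is to apply the spectral-set inequality $\|f(\mathbf T)\|\le\sup_{\overline{\mathbf{\Theta}}_n}|f|$ to the coordinate functions $f(\theta_1,\ldots,\theta_n)=\theta_i$, which are polynomials and hence belong to $\mathcal O(\overline{\mathbf{\Theta}}_n)$. Since $f(\mathbf T)=T_i$, the estimate reduces to computing, or at least bounding, $\sup_{\overline{\mathbf{\Theta}}_n}|\theta_i|$ for each $i$.

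For $1\le i\le n-1$, I would write a point of $\overline{\mathbf{\Theta}}_n$ as $\theta(z)$ with $z\in\overline{\mathbb D}^{\,n}$. Then $\theta_i(z)=s_i(z_1^m,\ldots,z_n^m)$ is a sum of $\binom{n}{i}$ monomials, each of modulus at most $1$. This gives
\[
\sup_{\overline{\mathbf{\Theta}}_n}|\theta_i|\le\binom{n}{i}=\binom{n-1}{i}+\binom{n-1}{i-1}.
\]
By the symmetry $\binom{n-1}{i-1}=\binom{n-1}{n-i}$, the right-hand side equals $k(i)$, so $\|T_i\|\le k(i)$.

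For $i=n$, we have $|\theta_n(z)|=|z_1\cdots z_n|^q\le1$, so $\|T_n\|\le1$.

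The only point needing care is the Pascal identity that rewrites $\binom{n}{i}$ as $k(i)$. There is no real obstacle. An alternative route would use Lemma~\ref{Lem 0} together with the known bounds for tetrablock coordinates, but the direct route above is simpler.
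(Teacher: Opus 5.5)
Your proof is correct and is exactly the argument the paper has in mind. The paper omits it as ``an immediate consequence of the definition'': apply the spectral-set inequality to the coordinate polynomials, and use $\sup_{\overline{\mathbf{\Theta}}_n}|\theta_i|\le\binom{n}{i}=k(i)$ and $|\theta_n|\le 1$ on $\overline{\mathbb D}^{\,n}$.
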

We next introduce two operator-valued functions that play an important role throughout the paper. Let $(T_1,\ldots,T_n)$ be a commuting $n$-tuple of bounded operators on a Hilbert space $\mathcal H$. For each $1\le i\le n-1$, define
\begin{equation}\label{Phi_1}
\begin{aligned}
\Phi^{(i)}_1(\alpha^iT_i,\alpha^{n-i}T_{n-i},\alpha^nT_n^p)
&=k(i)^2\bigl(I-|\alpha|^{2n}T_n^{*p}T_n^p\bigr)
+\bigl(|\alpha|^{2i}T_i^*T_i-|\alpha|^{2(n-i)}T_{n-i}^*T_{n-i}\bigr)\\
&\quad-k(i)\alpha^i\bigl(T_i-|\alpha|^{2(n-i)}T_{n-i}^*T_n^p\bigr)-k(i)\overline{\alpha}^{\,i}\bigl(T_i^*-|\alpha|^{2(n-i)}T_{n-i}T_n^{*p}\bigr),
\end{aligned}
\end{equation}
and
\begin{equation}\label{Phi_2}
\begin{aligned}
\Phi^{(i)}_2(\alpha^iT_i,\alpha^{n-i}T_{n-i},\alpha^nT_n^p)
&=k(i)^2\bigl(I-|\alpha|^{2n}T_n^{*p}T_n^p\bigr)
+\bigl(|\alpha|^{2(n-i)}T_{n-i}^*T_{n-i}-|\alpha|^{2i}T_i^*T_i\bigr)\\
&\quad-k(i)\alpha^{\,n-i}\bigl(T_{n-i}-|\alpha|^{2i}T_i^*T_n^p\bigr)-k(i)\overline{\alpha}^{\,n-i}\bigl(T_{n-i}^*-|\alpha|^{2i}T_iT_n^{*p}\bigr).
\end{aligned}
\end{equation}
The following lemma establishes a fundamental connection between $\mathbf{\Theta}_n$-contractions and $\Gamma_n$-contractions.
\begin{lem}\label{Lem 1}
Let $\mathbf T=(T_1,\ldots,T_n)$ be a commuting $n$-tuple of bounded operators on a Hilbert space $\mathcal H$. If $\mathbf T$ is a $\mathbf{\Theta}_n$-contraction, then
$
(T_1,\ldots,T_{n-1},T_n^p)$
is a $\Gamma_n$-contraction.
\end{lem}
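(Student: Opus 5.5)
The plan is to transport the spectral-set property through the polynomial map
\[
\pi:\mathbb C^n\to\mathbb C^n,\qquad \pi(x_1,\ldots,x_n)=(x_1,\ldots,x_{n-1},x_n^p),
\]
using the elementary fact \cite[Lemma~2.2]{Paul} that $\pi(\overline{\mathbf{\Theta}}_n)\subseteq\Gamma_n$. Indeed, for $(\theta_1,\ldots,\theta_n)\in\overline{\mathbf{\Theta}}_n$ the polynomial $P$ in \eqref{P} has all its zeros in $\overline{\mathbb D}$, and its coefficients are exactly $\pi(\theta)$. Put $\mathbf S=(T_1,\ldots,T_{n-1},T_n^p)=\pi(\mathbf T)$. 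This is clearly a commuting tuple, so two things remain: the spectral inclusion and the von Neumann-type inequality.

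First, for the spectrum I will use the polynomial spectral mapping theorem for the Taylor joint spectrum (any reasonable joint spectrum satisfying the spectral mapping property for polynomial maps will do). It gives
\[
\sigma(\mathbf S)=\pi(\sigma(\mathbf T))\subseteq\pi(\overline{\mathbf{\Theta}}_n)\subseteq\Gamma_n .
\]

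Second, for the norm estimate, let $f$ be holomorphic on a neighbourhood $U$ of $\Gamma_n$. Then $g=f\circ\pi$ is holomorphic on $\pi^{-1}(U)$, which is an open neighbourhood of $\overline{\mathbf{\Theta}}_n$ because $\pi$ is continuous and maps $\overline{\mathbf{\Theta}}_n$ into $\Gamma_n\subseteq U$. By the composition rule for the holomorphic (Taylor) functional calculus, $g(\mathbf T)=f(\pi(\mathbf T))=f(\mathbf S)$. Since $\mathbf T$ is a $\mathbf{\Theta}_n$-contraction, this yields
\[
\|f(\mathbf S)\|=\|g(\mathbf T)\|\le\sup_{\overline{\mathbf{\Theta}}_n}|f\circ\pi|\le\sup_{\Gamma_n}|f|.
\]

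The main obstacle is justifying the composition rule $(f\circ\pi)(\mathbf T)=f(\pi(\mathbf T))$ in the multivariable functional calculus. To sidestep it, I would use the fact that $\Gamma_n$ is polynomially convex: it is the image of the polynomially convex set $\overline{\mathbb D}^n$ under the proper polynomial map $\mathbf s$. By the Oka–Weil theorem, it then suffices to verify the inequality for polynomials $f$, and for polynomials $f(\mathbf S)=(f\circ\pi)(\mathbf T)$ holds trivially by substitution. The step from polynomials to all $f\in\mathcal O(\Gamma_n)$ is then an approximation argument: approximate $f$ uniformly on a neighbourhood of $\Gamma_n$ by polynomials, and pass to the limit using continuity of the Taylor functional calculus together with the spectral inclusion established above.
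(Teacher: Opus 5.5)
Your proof is correct and follows the paper's route. The paper uses the same map $\pi_p(z)=(z_1,\ldots,z_{n-1},z_n^p)$, the inclusion $\pi_p(\overline{\mathbf{\Theta}}_n)\subseteq\Gamma_n$ from \cite[Lemma~2.2]{Paul}, and the same chain of inequalities for polynomials, and then simply declares $\Gamma_n$ a spectral set; your spectral-mapping inclusion and the Oka--Weil passage from polynomials to $\mathcal O(\Gamma_n)$ just make those implicit steps explicit.

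One small correction: polynomial convexity of $\Gamma_n$ does not follow merely from $\Gamma_n$ being the image of a polynomially convex set under a proper polynomial map. For example, $z\mapsto z^2$ sends a closed half-annulus onto a full annulus, which is not polynomially convex. The standard argument uses that $\overline{\mathbb D}^n$ is symmetric, so a separating polynomial can be symmetrized and written as a polynomial in $\mathbf s$. The fact itself is well known, so your proof is unaffected.
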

\begin{proof}
Since $\mathbf T$ is a $\mathbf{\Theta}_n$-contraction, $\overline{\mathbf{\Theta}}_n$ is a spectral set for $\mathbf T$. Define
\[
\pi_p:\mathbb C^n\longrightarrow\mathbb C^n,
\qquad
\pi_p(z_1,\ldots,z_n)
=(z_1,\ldots,z_{n-1},z_n^p).
\]
By \cite[Lemma~2.2]{Paul},
$
\pi_p(\overline{\mathbf{\Theta}}_n)\subseteq\Gamma_n.$
Therefore, for every polynomial $\widetilde p$ in $n$ variables,
\[
\begin{aligned}
\|\widetilde p(T_1,\ldots,T_{n-1},T_n^p)\|
&=\|(\widetilde p\circ\pi_p)(T_1,\ldots,T_n)\|\\
&\le
\|\widetilde p\circ\pi_p\|_{\infty,\overline{\mathbf{\Theta}}_n}\\
&=
\|\widetilde p\|_{\infty,\pi_p(\overline{\mathbf{\Theta}}_n)}\\
&\le
\|\widetilde p\|_{\infty,\Gamma_n}.
\end{aligned}
\]
Hence $\Gamma_n$ is a spectral set for $
(T_1,\ldots,T_{n-1},T_n^p),$
and consequently this tuple is a $\Gamma_n$-contraction.
\end{proof}	
The next result shows how a $\mathbf{\Theta}_{n+1}$-contraction can be constructed from a given $\mathbf{\Theta}_n$-contraction.
\begin{lem}\label{Lem 2}
Let $\mathbf{T}=(T_1,\ldots,T_n)$ be a commuting $n$-tuple of bounded operators on a Hilbert space $\mathcal H$. If $\mathbf T$ is a $\mathbf{\Theta}_n$-contraction, then
\[
(\alpha^mI+T_1,\,
\alpha^mT_1+T_2,\,
\ldots,\,
\alpha^mT_{n-1}+T_n^p,\,
\alpha^{m/p}T_n)
\]
is a $\mathbf{\Theta}_{n+1}$-contraction for every $\alpha\in\overline{\mathbb D}$.
\end{lem}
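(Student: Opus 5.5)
The plan is to argue exactly as in the proof of Lemma~\ref{Lem 1}: realize the new tuple as the image of $\mathbf T$ under a polynomial map that carries $\overline{\mathbf{\Theta}}_n$ into $\overline{\mathbf{\Theta}}_{n+1}$, and then transfer the spectral set property. Fix $\alpha\in\overline{\mathbb D}$ and define $F_\alpha:\mathbb C^n\to\mathbb C^{n+1}$ by
\[
F_\alpha(x_1,\ldots,x_n)=\bigl(\alpha^m+x_1,\ \alpha^mx_1+x_2,\ \ldots,\ \alpha^mx_{n-1}+x_n^p,\ \alpha^{q}x_n\bigr),\qquad q=\tfrac{m}{p}.
\]
Since $q$ is a positive integer, $F_\alpha$ is a polynomial map. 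The tuple in the statement is then precisely $F_\alpha(T_1,\ldots,T_n)$.

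\textbf{Step 1: $F_\alpha(\overline{\mathbf{\Theta}}_n)\subseteq\overline{\mathbf{\Theta}}_{n+1}$.} Let $(\theta_1,\ldots,\theta_n)=\theta(z_1,\ldots,z_n)$ with $z_j\in\overline{\mathbb D}$, and put $z_{n+1}=\alpha\in\overline{\mathbb D}$. Write $\theta'$ for the map defining $\mathbf{\Theta}_{n+1}$. Using the identity $s_i(w_1,\ldots,w_n,w_{n+1})=s_i(w_1,\ldots,w_n)+w_{n+1}s_{i-1}(w_1,\ldots,w_n)$ with $w_j=z_j^m$, we get:
\begin{enumerate}
\item for $1\le i\le n-1$, $\theta'_i(z_1,\ldots,z_{n+1})=\theta_i+\alpha^m\theta_{i-1}$, where $\theta_0=1$;
\item for $i=n$, $\theta'_n=s_n(z_1^m,\ldots,z_n^m)+\alpha^m\theta_{n-1}=(z_1\cdots z_n)^m+\alpha^m\theta_{n-1}=\theta_n^p+\alpha^m\theta_{n-1}$, because $qp=m$;
\item finally, $\theta'_{n+1}=(z_1\cdots z_n\alpha)^q=\alpha^q\theta_n$.
\end{enumerate}
Hence $F_\alpha(\theta)=\theta'(z_1,\ldots,z_n,\alpha)\in\overline{\mathbf{\Theta}}_{n+1}$.

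\textbf{Step 2: transfer the spectral set property.} By the polynomial spectral mapping theorem for the Taylor joint spectrum,
\[
\sigma\bigl(F_\alpha(\mathbf T)\bigr)=F_\alpha(\sigma(\mathbf T))\subseteq F_\alpha(\overline{\mathbf{\Theta}}_n)\subseteq\overline{\mathbf{\Theta}}_{n+1}.
\]
For any polynomial $f$ in $n+1$ variables, $f\circ F_\alpha$ is a polynomial in $n$ variables. Therefore
\[
\|f(F_\alpha(\mathbf T))\|=\|(f\circ F_\alpha)(\mathbf T)\|\le\sup_{\overline{\mathbf{\Theta}}_n}|f\circ F_\alpha|\le\sup_{\overline{\mathbf{\Theta}}_{n+1}}|f|.
\]

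\textbf{Expected obstacle.} The only delicate point is passing from polynomials to all of $\mathcal O(\overline{\mathbf{\Theta}}_{n+1})$, as the definition of spectral set requires. For this I would use that $\overline{\mathbf{\Theta}}_{n+1}$ is polynomially convex. It is the image of the polynomially convex set $\overline{\mathbb D}^{\,n+1}$ under the proper polynomial map $\theta'$, whose fibres are orbits of a finite group; alternatively, this follows from the root-location characterization \eqref{P}. Once polynomial convexity is available, the Oka--Weil theorem shows that polynomial estimates suffice, exactly as was implicitly used in Lemma~\ref{Lem 1}. The algebraic identity in Step 1 is routine once $z_{n+1}=\alpha$ is chosen.
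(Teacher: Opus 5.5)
Your proof is correct and follows the paper's argument. The paper uses the same map $\pi_\alpha$ and the same composition estimate, but cites \cite[Lemma~2.5]{Paul} for $\pi_\alpha(\overline{\mathbf{\Theta}}_n)\subseteq\overline{\mathbf{\Theta}}_{n+1}$, whereas you verify this inclusion directly by setting $z_{n+1}=\alpha$ and using $s_i(w_1,\ldots,w_{n+1})=s_i(w_1,\ldots,w_n)+w_{n+1}s_{i-1}(w_1,\ldots,w_n)$. You also treat explicitly the joint spectrum (via the spectral mapping theorem) and the passage from polynomials to $\mathcal O(\overline{\mathbf{\Theta}}_{n+1})$ (via polynomial convexity and Oka--Weil), steps the paper leaves implicit here and only addresses later in Theorem~\ref{Theta_n Contraction Char}.
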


\begin{proof}
Fix $\alpha\in\overline{\mathbb D}$. Since $\mathbf T$ is a $\mathbf{\Theta}_n$-contraction, $\overline{\mathbf{\Theta}}_n$ is a spectral set for $\mathbf T$. Define
$
\pi_\alpha:\mathbb C^n\longrightarrow\mathbb C^{n+1}$
by
\[
\pi_\alpha(z_1,\ldots,z_n)
=
(\alpha^m+z_1,\,
\alpha^mz_1+z_2,\,
\ldots,\,
\alpha^mz_{n-1}+z_n^p,\,
\alpha^{m/p}z_n).
\]
By \cite[Lemma~2.5]{Paul},
$
\pi_\alpha(\overline{\mathbf{\Theta}}_n)
\subseteq
\overline{\mathbf{\Theta}}_{n+1}.$
Hence, for every polynomial $q$ in $n+1$ variables,
\[
\begin{aligned}
&\|q(\alpha^mI+T_1,\alpha^mT_1+T_2,\ldots,
\alpha^mT_{n-1}+T_n^p,\alpha^{m/p}T_n)\|  \\
&\qquad
=
\|(q\circ\pi_\alpha)(T_1,\ldots,T_n)\|   \\
&\qquad
\le
\|q\circ\pi_\alpha\|_{\infty,\overline{\mathbf{\Theta}}_n}
=
\|q\|_{\infty,\pi_\alpha(\overline{\mathbf{\Theta}}_n)}
\le
\|q\|_{\infty,\overline{\mathbf{\Theta}}_{n+1}}.
\end{aligned}
\]
Therefore,
$
(\alpha^mI+T_1,\,
\alpha^mT_1+T_2,\,
\ldots,\,
\alpha^mT_{n-1}+T_n^p,\,
\alpha^{m/p}T_n)$
is a $\mathbf{\Theta}_{n+1}$-contraction.
\end{proof}
The following lemma establishes a connection between $\mathbf{\Theta}_n$-contractions and $\mathbb E$-contractions.
\begin{lem}\label{Lem 3}
Let $\mathbf T=(T_1,\ldots,T_n)$ be a commuting $n$-tuple of bounded operators on a Hilbert space $\mathcal H$. If $\mathbf T$ is a $\mathbf{\Theta}_n$-contraction, then
$
\left(
\frac{T_i}{k(i)},
\frac{T_{n-i}}{k(i)},
T_n^p
\right)$
is an $\mathbb E$-contraction for every $1\le i\le n-1$.
\end{lem}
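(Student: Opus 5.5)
We argue exactly as in Lemmas~\ref{Lem 1} and \ref{Lem 2}, by pushing the spectral set $\overline{\mathbf{\Theta}}_n$ forward under a polynomial map. Fix $1\le i\le n-1$ and define
\[
\rho_i:\mathbb C^n\longrightarrow\mathbb C^3,\qquad
\rho_i(z_1,\ldots,z_n)=\left(\frac{z_i}{k(i)},\frac{z_{n-i}}{k(i)},z_n^p\right).
\]
The key geometric input is the inclusion $\rho_i(\overline{\mathbf{\Theta}}_n)\subseteq\overline{\mathbb E}$. This is the case $\alpha=1$ of Lemma~\ref{Lem 0}. Equivalently, one can invoke \cite[Lemma~2.2]{Paul} to get $(\theta_1,\ldots,\theta_{n-1},\theta_n^p)\in\Gamma_n$, and then apply \cite[Lemma~2.8]{Paul}.

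With this inclusion, the operator estimate is routine. The map $\rho_i$ is a polynomial map, so for every polynomial $q$ in three variables the composition $q\circ\rho_i$ is a polynomial in $n$ variables. It satisfies
\[
(q\circ\rho_i)(T_1,\ldots,T_n)=q\!\left(\frac{T_i}{k(i)},\frac{T_{n-i}}{k(i)},T_n^p\right).
\]
Since $\overline{\mathbf{\Theta}}_n$ is a spectral set for $\mathbf T$, we obtain
\[
\left\|q\!\left(\frac{T_i}{k(i)},\frac{T_{n-i}}{k(i)},T_n^p\right)\right\|
\le\|q\circ\rho_i\|_{\infty,\overline{\mathbf{\Theta}}_n}
=\|q\|_{\infty,\rho_i(\overline{\mathbf{\Theta}}_n)}
\le\|q\|_{\infty,\overline{\mathbb E}}.
\]

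It remains to upgrade this polynomial von Neumann inequality to $\overline{\mathbb E}$ being a spectral set. The three operators commute because $\mathbf T$ is commuting. By the spectral mapping theorem for the Taylor joint spectrum,
\[
\sigma\!\left(\frac{T_i}{k(i)},\frac{T_{n-i}}{k(i)},T_n^p\right)=\rho_i(\sigma(\mathbf T))\subseteq\rho_i(\overline{\mathbf{\Theta}}_n)\subseteq\overline{\mathbb E}.
\]
The closed tetrablock is polynomially convex, so polynomials suffice to test the spectral-set condition, just as in the paper's proofs of Lemmas~\ref{Lem 1} and \ref{Lem 2}.

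The only step with real content is the inclusion $\rho_i(\overline{\mathbf{\Theta}}_n)\subseteq\overline{\mathbb E}$. The paper has already secured it through Lemma~\ref{Lem 0} and the cited results of \cite{Paul}, so I expect no genuine obstacle. If one wanted a self-contained check, one could instead verify the contractivity of $\Psi$ and $\Upsilon$ from \eqref{Psi, Upsilon} at the point $\rho_i(\theta)$. That route is essentially the computation already recorded in the Remark following Lemma~\ref{Lem 0}.
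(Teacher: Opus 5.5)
Your proposal is correct and matches the paper's proof essentially step for step. The paper uses the same map $\pi_i(z)=\bigl(z_i/k(i),z_{n-i}/k(i),z_n^p\bigr)$, takes $\pi_i(\overline{\mathbf{\Theta}}_n)\subseteq\overline{\mathbb E}$ from \cite[Lemma~2.8]{Paul}, and runs the same chain of polynomial norm inequalities; your extra step placing the Taylor spectrum in $\overline{\mathbb E}$ (spectral mapping theorem plus polynomial convexity of $\overline{\mathbb E}$) is a correct detail that the paper leaves implicit.
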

\begin{proof}
Since $\mathbf T$ is a $\mathbf{\Theta}_n$-contraction, $\overline{\mathbf{\Theta}}_n$ is a spectral set for $\mathbf T$. Define
$
\pi_i:\mathbb C^n\longrightarrow\mathbb C^3$
by
\[
\pi_i(z_1,\ldots,z_n)
=
\left(
\frac{z_i}{k(i)},
\frac{z_{n-i}}{k(i)},
z_n^p
\right),
\qquad
1\le i\le n-1.
\]
By \cite[Lemma~2.8]{Paul},
$
\pi_i(\overline{\mathbf{\Theta}}_n)
\subseteq
\overline{\mathbb E}.$
Therefore, for every polynomial $q$ in three variables,
\[
\begin{aligned}
\left\|
q\left(
\frac{T_i}{k(i)},
\frac{T_{n-i}}{k(i)},
T_n^p
\right)
\right\|
&=
\|(q\circ\pi_i)(T_1,\ldots,T_n)\|  \\
&\le
\|q\circ\pi_i\|_{\infty,\overline{\mathbf{\Theta}}_n}  \\
&=
\|q\|_{\infty,\pi_i(\overline{\mathbf{\Theta}}_n)}  \\
&\le
\|q\|_{\infty,\overline{\mathbb E}}.
\end{aligned}
\]
Hence
$
\left(
\frac{T_i}{k(i)},
\frac{T_{n-i}}{k(i)},
T_n^p
\right)$
is an $\mathbb E$-contraction.
\end{proof}
The following lemma is an immediate consequence of Lemma~\ref{Lem 0} and Lemma~\ref{Lem 3}, and hence its proof is omitted.
\begin{lem}\label{Lem 4}
Let $\mathbf T=(T_1,\ldots,T_n)$ be a commuting $n$-tuple of bounded operators on a Hilbert space $\mathcal H$. If $\mathbf T$ is a $\mathbf{\Theta}_n$-contraction, then
$
\left(
\alpha^i\frac{T_i}{k(i)},
\alpha^{n-i}\frac{T_{n-i}}{k(i)},
\alpha^nT_n^p
\right)$
is an $\mathbb E$-contraction for every $\alpha\in\overline{\mathbb D}$ and every $1\le i\le n-1$.
\end{lem}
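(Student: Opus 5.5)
The approach is to repeat the spectral-set argument of Lemma~\ref{Lem 3}, but with a map that already carries the scaling by $\alpha$. Fix $\alpha\in\overline{\mathbb D}$ and $1\le i\le n-1$, and define $\pi_{i,\alpha}:\mathbb C^n\to\mathbb C^3$ by
\[
\pi_{i,\alpha}(z_1,\ldots,z_n)=\left(\alpha^i\frac{z_i}{k(i)},\ \alpha^{n-i}\frac{z_{n-i}}{k(i)},\ \alpha^n z_n^p\right).
\]
This is a polynomial map. Lemma~\ref{Lem 0} says precisely that $\pi_{i,\alpha}(\overline{\mathbf{\Theta}}_n)\subseteq\overline{\mathbb E}$. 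That inclusion is the only geometric input needed, and it is already available.

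Next, let $q$ be a polynomial in three variables. Then $q\circ\pi_{i,\alpha}$ is a polynomial in $n$ variables, and functional calculus for commuting tuples gives
\[
q\left(\alpha^i\tfrac{T_i}{k(i)},\alpha^{n-i}\tfrac{T_{n-i}}{k(i)},\alpha^nT_n^p\right)=(q\circ\pi_{i,\alpha})(T_1,\ldots,T_n).
\]
Since $\overline{\mathbf{\Theta}}_n$ is a spectral set for $\mathbf T$, the norm of this operator is at most $\|q\circ\pi_{i,\alpha}\|_{\infty,\overline{\mathbf{\Theta}}_n}=\|q\|_{\infty,\pi_{i,\alpha}(\overline{\mathbf{\Theta}}_n)}\le\|q\|_{\infty,\overline{\mathbb E}}$. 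This is the same chain of inequalities as in Lemmas~\ref{Lem 1}--\ref{Lem 3}.

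To conclude that the triple is an $\mathbb E$-contraction, I also need its joint spectrum to lie in $\overline{\mathbb E}$. By the spectral mapping theorem for the Taylor spectrum,
\[
\sigma\left(\alpha^i\tfrac{T_i}{k(i)},\alpha^{n-i}\tfrac{T_{n-i}}{k(i)},\alpha^nT_n^p\right)=\pi_{i,\alpha}(\sigma(\mathbf T))\subseteq\pi_{i,\alpha}(\overline{\mathbf{\Theta}}_n)\subseteq\overline{\mathbb E}.
\]
Because $\overline{\mathbb E}$ is polynomially convex, the polynomial von Neumann inequality then upgrades to the full spectral-set condition. This is the standard convention used implicitly in the preceding lemmas.

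The only delicate point is this passage from polynomials to the full spectral-set definition, and it is handled exactly as in Lemma~\ref{Lem 3}, so I expect no genuine obstacle. An alternative route would be to apply Lemma~\ref{Lem 3} first and then show that $(\alpha^i X,\alpha^{n-i}Y,\alpha^n Z)$ remains an $\mathbb E$-contraction. That would require a scaling lemma for the tetrablock, which is not stated earlier in the paper, so the direct composition above is preferable.
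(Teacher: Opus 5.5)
Your proposal is correct and matches the paper's intended argument. The paper omits the proof, calling it an immediate consequence of Lemma~\ref{Lem 0} (which gives the inclusion $\pi_{i,\alpha}(\overline{\mathbf{\Theta}}_n)\subseteq\overline{\mathbb E}$) and the composition argument of Lemma~\ref{Lem 3}; this is exactly your proof with the $\alpha$-scaled map $\pi_{i,\alpha}$, and your explicit joint-spectrum check and polynomial-convexity upgrade just spell out a step that Lemma~\ref{Lem 3} leaves implicit.
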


The next proposition plays a fundamental role in the characterization of $\mathbf{\Theta}_n$-unitaries and $\mathbf{\Theta}_n$-isometries, as well as in the canonical decomposition of a $\mathbf{\Theta}_n$-contraction into the direct sum of a $\mathbf{\Theta}_n$-unitary and a completely non-unitary $\mathbf{\Theta}_n$-contraction.

\begin{prop}\label{Prop 1}
Let $(T_1,\ldots,T_n)$ be a $\mathbf{\Theta}_n$-contraction. Then, for every $\alpha\in\overline{\mathbb D}$ and every $1\le i\le n-1$,
\[
\Phi^{(i)}_1(\alpha^iT_i,\alpha^{n-i}T_{n-i},\alpha^nT_n^p)\ge0
\quad \text{and}\quad
\Phi^{(i)}_2(\alpha^iT_i,\alpha^{n-i}T_{n-i},\alpha^nT_n^p)\ge0.
\]
\end{prop}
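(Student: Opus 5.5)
The plan is to reduce the statement to the tetrablock via Lemma~\ref{Lem 4}, and then use the standard tetrablock contraction characterization through the functions $\Psi$ and $\Upsilon$ of \eqref{Psi, Upsilon}.

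Fix $\alpha\in\overline{\mathbb D}$ and $1\le i\le n-1$. By Lemma~\ref{Lem 4}, the triple
\[
(A_1,A_2,A_3)=\left(\alpha^i\tfrac{T_i}{k(i)},\ \alpha^{n-i}\tfrac{T_{n-i}}{k(i)},\ \alpha^nT_n^p\right)
\]
is an $\mathbb E$-contraction, so $\overline{\mathbb E}$ is a spectral set for it. Since $\|A_1\|$ and $\|A_2\|$ are at most $1$, the operators $I-zA_1$ and $I-zA_2$ are invertible for $z\in\mathbb D$. The spectral set property, applied to the contractive functions $\Psi(z,\cdot)$ and $\Upsilon(z,\cdot)$, gives the operator norm bounds
\[
\|(A_1-zA_3)(I-zA_2)^{-1}\|\le 1,\qquad \|(A_2-zA_3)(I-zA_1)^{-1}\|\le1 .
\]
Rigorously, for fixed $z\in\mathbb D$ these functions are holomorphic on a neighbourhood of $\overline{\mathbb E}$, because $|x_1|,|x_2|\le1$ there, so they belong to $\mathcal O(\overline{\mathbb E})$.

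The first bound is equivalent to
\[
(I-zA_2)^*(I-zA_2)-(A_1-zA_3)^*(A_1-zA_3)\ge0 .
\]
Let $z\to1$ radially; the inequality persists by norm continuity. This yields
\[
I-A_1^*A_1+A_2^*A_2-A_3^*A_3-(A_2-A_1^*A_3)-(A_2-A_1^*A_3)^*\ge0 .
\]
Now multiply by $k(i)^2$, substitute the $A_j$, and use commutativity, noting that $A_1^*A_3=\overline{\alpha}^{\,i}\alpha^n|\cdot|$-type products simplify to $\alpha^{n-i}|\alpha|^{2i}T_i^*T_n^p/k(i)$. The result is exactly $\Phi_2^{(i)}\ge0$. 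The same argument applied to the bound involving $\Upsilon$ gives $\Phi_1^{(i)}\ge0$.

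The main obstacle is bookkeeping in the final step. One must check that the expanded expression matches \eqref{Phi_1} and \eqref{Phi_2} precisely, including the signs of the $T_i^*T_i$ and $T_{n-i}^*T_{n-i}$ terms and the powers of $|\alpha|$. In particular, $\alpha^i\overline{\alpha}^{\,n-i}\alpha^n$ should combine to $\alpha^i|\alpha|^{2(n-i)}$ after the cross terms are collected. If the $z=1$ limit is delicate, for instance in justifying invertibility at $z=1$, the fallback is to avoid inverses entirely. In that case one would use the equivalent formulation $(I-zA_2)^*(I-zA_2)-(A_1-zA_3)^*(A_1-zA_3)\ge0$ for $|z|<1$, obtained from the spectral set property, and pass to the limit in that form.
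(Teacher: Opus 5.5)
Your proposal is correct and follows essentially the paper's route: Lemma~\ref{Lem 4} to get an $\mathbb E$-contraction, the contractivity of $\Psi(z,\cdot)$ and $\Upsilon(z,\cdot)$, clearing denominators to obtain an inverse-free operator inequality, and letting $z\to1$. Your pairing is the accurate one: $\Psi$ yields $\Phi^{(i)}_2$ and $\Upsilon$ yields $\Phi^{(i)}_1$, whereas the paper's text attaches $\Psi$ to $\Phi^{(i)}_1$; also, in your side remark the relevant combination is $\overline{\alpha}^{\,n-i}\alpha^n=\alpha^i|\alpha|^{2(n-i)}$, without the extra factor $\alpha^i$.
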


\begin{proof}
By Lemma~\ref{Lem 4},
$
\left(
\alpha^i\frac{T_i}{k(i)},
\alpha^{n-i}\frac{T_{n-i}}{k(i)},
\alpha^nT_n^p
\right)$
is an $\mathbb E$-contraction for every $\alpha\in\overline{\mathbb D}$ and every $1\le i\le n-1$. Therefore, by the defining property of the rational function $\Psi$ in \eqref{Psi, Upsilon},
\[
\left\|
\Psi\left(
z,
\alpha^i\frac{T_i}{k(i)},
\alpha^{n-i}\frac{T_{n-i}}{k(i)},
\alpha^nT_n^p
\right)
\right\|
\le1,
\qquad z\in\mathbb D.
\]
A straightforward computation shows that this inequality is equivalent to
\[
\Phi^{(i)}_1
(\alpha^iT_i,
z\alpha^{n-i}T_{n-i},
z\alpha^nT_n^p)
\ge0,
\qquad z\in\mathbb D.
\]
Since $\Phi^{(i)}_1$ depends continuously on $z$, the inequality remains valid for every $z\in\overline{\mathbb D}$. Taking $z=1$, we obtain
\[
\Phi^{(i)}_1
(\alpha^iT_i,
\alpha^{n-i}T_{n-i},
\alpha^nT_n^p)
\ge0.
\]
Applying the same argument to the rational function $\Upsilon$ yields
\[
\Phi^{(i)}_2
(\alpha^iT_i,
\alpha^{n-i}T_{n-i},
\alpha^nT_n^p)
\ge0.
\]
This completes the proof.
\end{proof}
The following theorem provides an equivalent characterization of
$\mathbf{\Theta}_n$-contractions in terms of the von Neumann inequality.
\begin{thm}\label{Theta_n Contraction Char}
Let $\mathbf T=(T_1,\ldots,T_n)$ be a commuting $n$-tuple of bounded operators on a Hilbert space $\mathcal H$. Then the following are equivalent.
\begin{enumerate}
\item $\mathbf T$ is a $\mathbf{\Theta}_n$-contraction.

\item For every polynomial $p$ in $n$ variables,
\[
\|p(T_1,\ldots,T_n)\|
\le
\|p\|_{\infty,\overline{\mathbf{\Theta}}_n}.
\]
\end{enumerate}
\end{thm}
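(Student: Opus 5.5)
The direction (1)$\Rightarrow$(2) is immediate. If $\overline{\mathbf{\Theta}}_n$ is a spectral set for $\mathbf T$, then every polynomial lies in $\mathcal O(\overline{\mathbf{\Theta}}_n)$, so $\|p(\mathbf T)\|\le\|p\|_{\infty,\overline{\mathbf{\Theta}}_n}$.

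For (2)$\Rightarrow$(1) I will use that $\overline{\mathbf{\Theta}}_n$ is polynomially convex and argue in two steps. The first step is polynomial convexity. Since $\overline{\mathbf{\Theta}}_n=\theta(\overline{\mathbb D}^{\,n})$ with $\theta$ a proper polynomial map that is invariant under the finite group generated by permutations of the coordinates and multiplication of each $z_j$ by an $m$-th root of unity, I will use the standard fact that the image of a polynomially convex compact set under such a proper polynomial map is polynomially convex. Concretely, take $w\notin\overline{\mathbf{\Theta}}_n$. Then $\theta^{-1}(w)$ is a finite set disjoint from $\overline{\mathbb D}^{\,n}$. For each preimage point, the polynomial convexity of $\overline{\mathbb D}^{\,n}$ gives a polynomial that is large at that point and small on the polydisc. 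Averaging a suitable power of a product of these over the group yields an invariant polynomial, which can be written as $\widetilde p\circ\theta$ and separates $w$ from $\overline{\mathbf{\Theta}}_n$. Alternatively, one can use the root characterization via \eqref{P}: $w\in\overline{\mathbf{\Theta}}_n$ iff all zeros of $P$ lie in $\overline{\mathbb D}$ (and $|\theta_n|\le 1$). This mirrors the corresponding argument for $\Gamma_n$ in \cite{S. Pal}.

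The second step is the spectral inclusion. Assuming (2), take $\lambda\notin\overline{\mathbf{\Theta}}_n$. Polynomial convexity gives a polynomial $q$ with $|q(\lambda)|>\|q\|_{\infty,\overline{\mathbf{\Theta}}_n}$. By (2), $\|q(\mathbf T)\|\le \|q\|_{\infty,\overline{\mathbf{\Theta}}_n}$, so the spectral radius of $q(\mathbf T)$ is strictly less than $|q(\lambda)|$. The spectral mapping theorem for the Taylor spectrum gives $q(\sigma(\mathbf T))=\sigma(q(\mathbf T))$, hence $\lambda\notin\sigma(\mathbf T)$. Therefore $\sigma(\mathbf T)\subseteq\overline{\mathbf{\Theta}}_n$.

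The third step is to pass from polynomials to $\mathcal O(\overline{\mathbf{\Theta}}_n)$. Let $f$ be holomorphic on a neighbourhood $U$ of $\overline{\mathbf{\Theta}}_n$. By the Oka--Weil theorem, which applies because the set is polynomially convex, there are polynomials $p_k\to f$ uniformly on a polynomially convex compact neighbourhood $K\subset U$ of $\overline{\mathbf{\Theta}}_n$. Since $\sigma(\mathbf T)\subseteq\overline{\mathbf{\Theta}}_n\subset \operatorname{int}K$, continuity of the Taylor holomorphic functional calculus gives $p_k(\mathbf T)\to f(\mathbf T)$ in norm. Hence
\[\|f(\mathbf T)\|=\lim_k\|p_k(\mathbf T)\|\le\lim_k\|p_k\|_{\infty,\overline{\mathbf{\Theta}}_n}=\|f\|_{\infty,\overline{\mathbf{\Theta}}_n}.\]

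I expect the main obstacle to be establishing the polynomial convexity of $\overline{\mathbf{\Theta}}_n$. I would try the proper-map and group-invariance argument first, since it mirrors what is known for $\Gamma_n$. The remaining steps are standard.
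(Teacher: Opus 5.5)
Your argument is correct and follows the paper's proof: a separating polynomial plus the spectral mapping theorem gives $\sigma(\mathbf T)\subseteq\overline{\mathbf{\Theta}}_n$, and then Oka--Weil approximation with the holomorphic functional calculus gives the spectral-set inequality. The paper simply takes polynomial convexity of $\overline{\mathbf{\Theta}}_n$ as known, and your approximation on a polynomially convex compact neighbourhood $K$ is a more careful version of the step the paper states loosely. If you want to prove polynomial convexity yourself, use your second route rather than the group-averaging sketch: for $p>1$, $\theta_n$ is not invariant under multiplying each $z_j$ by an $m$-th root of unity, and a product of separating polynomials may vanish on the fibre. The root characterization says exactly that $\overline{\mathbf{\Theta}}_n=\pi_p^{-1}(\Gamma_n)$ with $\pi_p(z)=(z_1,\ldots,z_{n-1},z_n^p)$, and the compact preimage of the polynomially convex set $\Gamma_n$ under a polynomial map is polynomially convex.
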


\begin{proof}
The implication $(1)\Rightarrow(2)$ follows immediately from the definition of a $\mathbf{\Theta}_n$-contraction.

Conversely, assume that {\rm(2)} holds. We first show that $
\sigma(\mathbf T)\subseteq\overline{\mathbf{\Theta}}_n.$
Suppose, to the contrary, that there exists
$
x=(x_1,\ldots,x_n)\in\sigma(\mathbf T)
\setminus
\overline{\mathbf{\Theta}}_n.$
Since $\overline{\mathbf{\Theta}}_n$ is polynomially convex, the Oka--Weil separation theorem yields a polynomial $p$ such that
$
|p(x)|
>
\|p\|_{\infty,\overline{\mathbf{\Theta}}_n}.$
By the spectral mapping theorem,
\[
\sigma\bigl(p(T_1,\ldots,T_n)\bigr)
=
p\bigl(\sigma(\mathbf T)\bigr).
\]
Hence,
\[
r\bigl(p(T_1,\ldots,T_n)\bigr)
\ge
|p(x)|
>
\|p\|_{\infty,\overline{\mathbf{\Theta}}_n},
\]
where $r(\cdot)$ denotes the spectral radius. Since
\[
r\bigl(p(T_1,\ldots,T_n)\bigr)
\le
\|p(T_1,\ldots,T_n)\|,
\]
this contradicts {\rm(2)}. Therefore,
$
\sigma(\mathbf T)\subseteq\overline{\mathbf{\Theta}}_n.$
Finally, since $\overline{\mathbf{\Theta}}_n$ is polynomially convex, the Oka--Weil theorem \cite[Theorem~5.1]{Gamelin} implies that every function holomorphic on a neighbourhood of $\overline{\mathbf{\Theta}}_n$ can be uniformly approximated on $\overline{\mathbf{\Theta}}_n$ by polynomials. Hence, by the holomorphic functional calculus for commuting operator tuples \cite[Theorem~9.9]{Vasilescu}, the inequality in {\rm(2)} extends from polynomials to all functions holomorphic on a neighbourhood of $\overline{\mathbf{\Theta}}_n$. Consequently, $\overline{\mathbf{\Theta}}_n$ is a spectral set for $\mathbf T$, and therefore $\mathbf T$ is a $\mathbf{\Theta}_n$-contraction.
\end{proof}	
	
\section{$\mathbf{\Theta}_n$-Unitaries and $\mathbf{\Theta}_n$-Isometries}\label{Unitary, Isometry}

In this section, we characterize $\mathbf{\Theta}_n$-unitaries and $\mathbf{\Theta}_n$-isometries. We also establish their relationships with $\Gamma_n$-unitaries (respectively, $\Gamma_n$-isometries), $\mathbf{\Theta}_{n+1}$-unitaries (respectively, $\mathbf{\Theta}_{n+1}$-isometries), and $\mathbb E$-unitaries (respectively, $\mathbb E$-isometries).

\subsection{Characterization of $\mathbf{\Theta}_n$-Unitaries}

\begin{thm}\label{Thm 1}
Let $\mathbf N=(N_1,\ldots,N_n)$ be a commuting $n$-tuple of bounded operators on a Hilbert space $\mathcal H$. Then the following statements are equivalent.

\begin{enumerate}

\item
$\mathbf N$ is a $\mathbf{\Theta}_n$-unitary.

\item
There exist commuting unitary operators $U_1,\ldots,U_n$ on $\mathcal H$ such that
\[
N_i=
\sum_{1\le k_1<\cdots<k_i\le n}
U_{k_1}\cdots U_{k_i},
\quad
1\le i\le n-1,\quad \text{and} \quad
N_n=(U_1U_2\cdots U_n)^q.
\]

\item
$N_n$ is unitary, $
(\gamma_1N_1,\ldots,\gamma_{n-1}N_{n-1})$
is a $\Gamma_{n-1}$-contraction, and
$
N_i=N_{n-i}^*N_n^p,
1\le i\le n-1.$

\item
$\mathbf N$ is a $\mathbf{\Theta}_n$-contraction and $N_n$ is unitary.

\item
$N_n$ is unitary, and there exists a $\Gamma_{n-1}$-unitary
$
(R_1,\ldots,R_{n-1})$
on $\mathcal H$ such that
$
R_1,\ldots,R_{n-1},N_n$
commute and
\[
N_i
=
R_i+R_{n-i}^*N_n^p,
\qquad
1\le i\le n-1.
\]

\item
$N_n$ is unitary, $
(\gamma_1N_1,\ldots,\gamma_{n-1}N_{n-1})$
is a $\Gamma_{n-1}$-contraction, and for every $\alpha\in\mathbb T$,
\[
\Phi^{(i)}_1
(\alpha^iN_i,\alpha^{n-i}N_{n-i},\alpha^nN_n^p)=0\quad \text{and}\quad
\Phi^{(i)}_2
(\alpha^iN_i,\alpha^{n-i}N_{n-i},\alpha^nN_n^p)=0,
\]
for every $1\le i\le n-1$.

\end{enumerate}
\end{thm}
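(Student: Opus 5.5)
The plan is to prove the cycle $(1)\Rightarrow(2)\Rightarrow(4)\Rightarrow(3)\Rightarrow(1)$, and then attach $(5)$ and $(6)$ to $(3)$. Throughout I will reduce to the known theory of $\Gamma_n$-unitaries via the map $\pi_p(z_1,\ldots,z_n)=(z_1,\ldots,z_{n-1},z_n^p)$ of Lemma~\ref{Lem 1}. The key geometric input is a description of the distinguished boundary. I will use $b\mathbf{\Theta}_n=\theta(\mathbb T^n)$, together with the fact (from the root description of $\overline{\mathbf{\Theta}}_n$ via \eqref{P} and \cite{Paul}) that a point $(\theta_1,\ldots,\theta_n)$ lies in $b\mathbf{\Theta}_n$ if and only if $|\theta_n|=1$ and $(\theta_1,\ldots,\theta_{n-1},\theta_n^p)\in b\Gamma_n$. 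The latter condition says that all zeros of $P$ lie on $\mathbb T$.

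\textbf{$(1)\Rightarrow(2)$.} I will use the spectral theorem for the commuting normal tuple $\mathbf N$. Its joint spectral measure $E$ is supported on $b\mathbf{\Theta}_n=\theta(\mathbb T^n)$. Since $\theta$ is a continuous surjection of the compact set $\mathbb T^n$ onto $b\mathbf{\Theta}_n$, it has a Borel right inverse $\sigma=(\sigma_1,\ldots,\sigma_n)$ (a measurable selection). Setting $U_j=\int\sigma_j\,dE$ gives commuting unitaries with $\theta(U_1,\ldots,U_n)=\mathbf N$, read with the $m$-th powers absorbed as in the statement of $(2)$.

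\textbf{$(2)\Rightarrow(4)$.} For a polynomial $f$, the spectral theorem for the commuting unitaries $U_j$ gives $\|f(\mathbf N)\|=\|(f\circ\theta)(U)\|\le\sup_{\mathbb T^n}|f\circ\theta|\le\|f\|_{\infty,\overline{\mathbf{\Theta}}_n}$. By Theorem~\ref{Theta_n Contraction Char}, $\mathbf N$ is a $\mathbf{\Theta}_n$-contraction, and $N_n$, being a product of unitaries, is unitary.

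\textbf{$(4)\Rightarrow(3)$.} By Lemma~\ref{Lem 1}, $(N_1,\ldots,N_{n-1},N_n^p)$ is a $\Gamma_n$-contraction whose last entry is unitary. The known characterization of $\Gamma_n$-unitaries (Biswas--Shyam Roy) then shows that it is a $\Gamma_n$-unitary. That characterization directly yields $N_i=N_{n-i}^*N_n^p$ and that $(\gamma_1N_1,\ldots,\gamma_{n-1}N_{n-1})$ is a $\Gamma_{n-1}$-contraction.

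\textbf{$(3)\Rightarrow(1)$.} The same $\Gamma_n$ theorem, run backwards, shows that $(N_1,\ldots,N_{n-1},N_n^p)$ is a $\Gamma_n$-unitary. Hence each $N_i$ is normal, and $N_n$ is normal because it is unitary. For any point of $\sigma(\mathbf N)$, the spectral mapping theorem places its image under $\pi_p$ in $b\Gamma_n$, and its last coordinate is unimodular. By the boundary description above, the point lies in $b\mathbf{\Theta}_n$.

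\textbf{$(3)\Leftrightarrow(5)$.} Apply the $\Gamma_n$-unitary structure theorem to $(N_1,\ldots,N_{n-1},N_n^p)$. Its representation $S_i=R_i+R_{n-i}^*S_n$, with $R$ a $\Gamma_{n-1}$-unitary commuting with $S_n$, transfers verbatim with $S_n=N_n^p$. The needed commutation with $N_n$ itself should follow from the Fuglede theorem, since $R$ is built from the spectral measure of the tuple.

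\textbf{$(3)\Leftrightarrow(6)$.} This is a direct computation. For $|\alpha|=1$ with $N_n$ unitary and $N_i=N_{n-i}^*N_n^p$, we have $T_i-|\alpha|^{2(n-i)}T_{n-i}^*T_n^p=0$ and $N_i^*N_i=N_{n-i}^*N_{n-i}$, so both $\Phi^{(i)}_1$ and $\Phi^{(i)}_2$ vanish. Conversely, if $\Phi^{(i)}_1=0$ for all $\alpha\in\mathbb T$, then with $N_n$ unitary the constant term $k(i)^2(I-N_n^{*p}N_n^p)$ vanishes. Viewing the remaining expression as a trigonometric polynomial in $\alpha$, the coefficient of $\alpha^i$ must vanish, which gives $N_i=N_{n-i}^*N_n^p$.

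I expect the main obstacle to be the boundary identification $b\mathbf{\Theta}_n=\pi_p^{-1}(b\Gamma_n)\cap\{|\theta_n|=1\}$, together with the measurable selection in $(1)\Rightarrow(2)$. For the former, I would first try to argue via the zeros of $P$: if all zeros are unimodular, pick $m$-th roots of them so that the $q$-th power of the product matches $\theta_n$, which is possible since $\theta_n^p$ equals the product of the zeros.
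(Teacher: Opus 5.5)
Your proof is correct, but it follows a different route from the paper's. The paper imports $(1)\Leftrightarrow(2)\Leftrightarrow(3)$ from the earlier $\mathbf{\Theta}_n$ literature. It proves $(4)\Rightarrow(3)$ through the tetrablock, using Lemma~\ref{Lem 3} together with the fact that an $\mathbb E$-contraction with unitary last entry is an $\mathbb E$-unitary. It obtains $(6)\Rightarrow(3)$ by adding $\Phi^{(i)}_1$ and $\Phi^{(i)}_2$ and specializing the parameters to $\pm1,\pm i$, and it attaches $(5)$ to $(2)$ by elementary symmetric-function identities.

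You instead send everything through $\pi_p$ and the $\Gamma_n$-unitary theorem:
\begin{itemize}
\item Lemma~\ref{Lem 1} gives $(4)\Rightarrow(3)$ with no tetrablock input.
\item The identification $b\mathbf{\Theta}_n=\pi_p^{-1}(b\Gamma_n)\cap\{|\theta_n|=1\}$ gives $(3)\Rightarrow(1)$.
\item A Borel section of $\theta|_{\mathbb T^n}$ gives $(1)\Rightarrow(2)$.
\item The $\alpha^i$-Fourier coefficient of $\Phi^{(i)}_1$ alone gives $(6)\Rightarrow(3)$, which is cleaner than the paper's sign choices.
\end{itemize}
Your route buys self-containment: the only geometric input is $b\mathbf{\Theta}_n=\theta(\mathbb T^n)$, and the core equivalences are proved rather than cited. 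The paper's proof is shorter because it outsources exactly that boundary analysis. For $(3)\Rightarrow(5)$, the simplest way to get commutation with $N_n$ is to take $R_i=s_i(U_1^m,\ldots,U_{n-1}^m)$, with the $U_j$ from your $(1)\Rightarrow(2)$, instead of applying Fuglede to a black-box $R$.

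On item $(2)$, your reading $\mathbf N=\theta(U_1,\ldots,U_n)$, i.e.\ $N_i=s_i(U_1^m,\ldots,U_n^m)$ and $N_n=(U_1\cdots U_n)^q$, is the right one. However, drop the phrase ``with the $m$-th powers absorbed''. Renaming $U_j^m$ as $U_j$ fixes $N_1,\ldots,N_{n-1}$ but not $N_n$. Moreover, the printed form of $(2)$ does not imply $(1)$ once $m\ge2$: for $n=m=2$, the scalars $U_1=1$, $U_2=i$ give $\theta_1=1+i$ and $\theta_2^p=-1$, and $z^2-(1+i)z-1$ has a zero outside $\overline{\mathbb D}$.

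Your root-adjustment step is exactly what is needed to return from the $\Gamma_n$-picture of $\pi_p(\mathbf N)$ to $\theta(\mathbb T^n)$. You multiply one $m$-th root by an $m$-th root of unity $\omega$, and the values $\omega^q$ exhaust the $p$-th roots of unity. The paper's $(5)\Rightarrow(2)$ skips this step: there $U_1\cdots U_n=R_{n-1}U_n=N_n^p$, so $(U_1\cdots U_n)^q=N_n^{m}$, not $N_n$. By linking $(5)$ to $(3)$ rather than to $(2)$, you avoid that problem entirely.
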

\begin{proof}
The equivalence $(1)\Leftrightarrow(2)\Leftrightarrow(3)$ follows from \cite[Theorem~3.2]{Biswas 2}. We prove
\[
(1)\Rightarrow(4)\Rightarrow(3)\Rightarrow(6)\Rightarrow(3)
\quad\text{and}\quad
(2)\Rightarrow(5)\Rightarrow(2).
\]

\medskip

\noindent
$(1)\Rightarrow(4)$.
Let $\mathbf N=(N_1,\ldots,N_n)$ be a $\mathbf{\Theta}_n$-unitary. Then $N_1,\ldots,N_n$ are commuting normal operators and
$
\sigma(\mathbf N)\subseteq b\mathbf{\Theta}_n.$
Hence, for every polynomial $p$ in $n$ variables, $p(\mathbf N)$ is normal. Therefore,
\[
\begin{aligned}
\|p(\mathbf N)\|
&=r\bigl(p(\mathbf N)\bigr)\\
&=\sup\{|p(z)|:z\in\sigma(\mathbf N)\}\\
&\le
\sup\{|p(z)|:z\in b\mathbf{\Theta}_n\}\\
&\le
\sup\{|p(z)|:z\in\overline{\mathbf{\Theta}}_n\}\\
&=\|p\|_{\infty,\overline{\mathbf{\Theta}}_n},
\end{aligned}
\]
where the second equality follows from the spectral theorem for commuting normal operators.
Thus, $\overline{\mathbf{\Theta}}_n$ is a spectral set for $\mathbf N$, and hence $\mathbf N$ is a $\mathbf{\Theta}_n$-contraction. Moreover, by the equivalence of {\rm(1)} and {\rm(3)}, $N_n$ is unitary. Therefore, {\rm(4)} holds.

\medskip

\noindent
$(4)\Rightarrow(3)$.
Suppose that $\mathbf N=(N_1,\ldots,N_n)$ is a $\mathbf{\Theta}_n$-contraction and that $N_n$ is unitary. By \cite[Lemma~2.7]{Biswas 2},
$
(\gamma_1N_1,\ldots,\gamma_{n-1}N_{n-1})$
is a $\Gamma_{n-1}$-contraction. Moreover, Lemma~\ref{Lem 3} implies that
$
\left(
\frac{N_i}{k(i)},
\frac{N_{n-i}}{k(i)},
N_n^p
\right)$
is an $\mathbb E$-contraction for every $1\le i\le n-1$. Since $N_n$ is unitary, so is $N_n^p$. Therefore, by \cite[Theorem~5.4]{Bhattacharyya}, the above triple is an $\mathbb E$-unitary. Consequently,
$
N_i=N_{n-i}^*N_n^p,
1\le i\le n-1.$
Hence, {\rm(3)} holds.

\medskip

\noindent
$(3)\Rightarrow(6)$ is immediate from the definitions. We prove the converse.

\medskip

\noindent
$(6)\Rightarrow(3)$.
Assume that {\rm(6)} holds. Adding the identities
\[
\Phi^{(i)}_1
(\alpha^iN_i,\alpha^{n-i}N_{n-i},\alpha^nN_n^p)=0
\quad \text{and}\quad
\Phi^{(i)}_2
(\beta^iN_i,\beta^{n-i}N_{n-i},\beta^nN_n^p)=0,
\]
where $\alpha,\beta\in\mathbb T$, gives
\begin{equation}\label{E1-new}
\begin{aligned}
k(i)(I-N_n^{*p}N_n^p)
&-\operatorname{Re}\!\left(
\alpha^{\,n-i}(N_{n-i}-N_i^*N_n^p)
\right)-\operatorname{Re}\!\left(
\beta^{\,i}(N_i-N_{n-i}^*N_n^p)
\right)=0.
\end{aligned}
\end{equation}
Choosing $\alpha$ and $\beta$ so that
$
\alpha^{\,n-i}=\pm1,
\beta^i=\pm1,$
and comparing the resulting identities, we obtain
\[
\operatorname{Re}(N_i-N_{n-i}^*N_n^p)=0\quad \text{and}\quad
\operatorname{Re}(N_{n-i}-N_i^*N_n^p)=0.
\]
Replacing $\alpha^{\,n-i}$ and $\beta^i$ by $\pm i$ similarly yields
\[
\operatorname{Im}(N_i-N_{n-i}^*N_n^p)=0\quad \text{and}\quad
\operatorname{Im}(N_{n-i}-N_i^*N_n^p)=0.
\]
Hence, $
N_i=N_{n-i}^*N_n^p,
N_{n-i}=N_i^*N_n^p,
1\le i\le n-1.$
Since the operators commute,
\[
N_iN_i^*N_n^p
=
N_iN_{n-i}
=
N_{n-i}N_i
=
N_i^*N_n^pN_i
=
N_i^*N_iN_n^p.
\]
Multiplying on the right by $N_n^{*p}$ gives
$
N_iN_i^*=N_i^*N_i,$
so each $N_i$ is normal.
Let $\mathcal A=C^*(I,N_1,\ldots,N_n)$. Since $\mathcal A$ is a commutative unital $C^*$-algebra, the Gelfand transform identifies each $N_i$ with the corresponding coordinate function on the maximal ideal space. The relations
\[
N_i=N_{n-i}^*N_n^p,
\qquad
|N_n|=I,
\]
therefore hold pointwise on the joint spectrum, and the joint spectrum of
$
(\gamma_1N_1,\ldots,\gamma_{n-1}N_{n-1})$
is contained in $\Gamma_{n-1}$. It follows from \cite[Theorem~2.5]{Biswas 2} that
$
\sigma(\mathbf N)\subseteq b\mathbf{\Theta}_n.$
Since the operators $N_1,\ldots,N_n$ are commuting and normal, $\mathbf N$ is a $\mathbf{\Theta}_n$-unitary. Hence, {\rm(3)} holds.

\medskip

\noindent
$(2)\Rightarrow(5)$.
Suppose that {\rm(2)} holds. For $1\le i\le n-1$, define
\[
R_i=
\sum_{1\le k_1<\cdots<k_i\le n-1}
U_{k_1}\cdots U_{k_i}.
\]
Then $(R_1,\ldots,R_{n-1})$ is a $\Gamma_{n-1}$-unitary by \cite[Theorem~3.2]{Biswas 2}. Moreover,
\[
\begin{aligned}
N_i
&=
\sum_{1\le k_1<\cdots<k_i\le n}
U_{k_1}\cdots U_{k_i} \\
&=
\sum_{1\le k_1<\cdots<k_i\le n-1}
U_{k_1}\cdots U_{k_i}
+
\sum_{1\le k_1<\cdots<k_{i-1}\le n-1}
U_{k_1}\cdots U_{k_{i-1}}U_n \\
&=
R_i+R_{n-i}^*N_n^p,
\end{aligned}
\]
where we use the identities
\[
R_{n-i}^*=R_{i-1}(U_1\cdots U_{n-1})^*\quad \text{and}\quad
N_n^p=U_1\cdots U_n.
\]
Hence, {\rm(5)} holds.

\medskip

\noindent
$(5)\Rightarrow(2)$.
Assume that {\rm(5)} holds. Since $(R_1,\ldots,R_{n-1})$ is a $\Gamma_{n-1}$-unitary, by \cite[Theorem~3.2]{Biswas 2} there exist commuting unitary operators
$
U_1,\ldots,U_{n-1}$
such that
\[
R_i=
\sum_{1\le k_1<\cdots<k_i\le n-1}
U_{k_1}\cdots U_{k_i},
\quad
1\le i\le n-1.
\]
Define
\[
U_n=R_{n-1}^*N_n^p.
\]
Since both $R_{n-1}$ and $N_n$ are unitary, $U_n$ is unitary. Furthermore, $U_n$ commutes with $U_1,\ldots,U_{n-1}$ because $R_{n-1}$ and $N_n$ commute.
Now,
\[
N_i
=
R_i+R_{n-i}^*N_n^p
=
R_i+R_{i-1}U_n,
\]
which is precisely the $i$th elementary symmetric polynomial in
$
U_1,\ldots,U_n.$
Finally,
\[
(U_1\cdots U_n)^q
=
(R_{n-1}U_n)^q
=
N_n.
\]
Therefore,
\[
N_i=
\sum_{1\le k_1<\cdots<k_i\le n}
U_{k_1}\cdots U_{k_i},
\quad
1\le i\le n-1,\quad \text{and}\quad
N_n=(U_1\cdots U_n)^q.
\]
Hence, {\rm(2)} holds. This completes the proof.
\end{proof}
We next establish the relationship between $\mathbf{\Theta}_n$-unitaries, $\Gamma_n$-unitaries, $\mathbf{\Theta}_{n+1}$-unitaries, and $\mathbb E$-unitaries.
\begin{thm}\label{Thm 2}
Let $\mathbf N=(N_1,\ldots,N_n)$ be a commuting $n$-tuple of bounded operators on a Hilbert space $\mathcal H$. The following statements are equivalent.

\begin{enumerate}
\item
$\mathbf N$ is a $\mathbf{\Theta}_n$-unitary.

\item
$N_n$ is normal and
$
(N_1,\ldots,N_{n-1},N_n^p)$
is a $\Gamma_n$-unitary.
\end{enumerate}
\end{thm}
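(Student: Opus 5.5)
We prove $(1)\Rightarrow(2)$ directly from Theorem~\ref{Thm 1}(2). We prove $(2)\Rightarrow(1)$ by building unitaries whose symmetric functions recover $\mathbf N$, and then extracting a $q$-th root compatible with $N_n$ via the spectral theorem.

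\emph{$(1)\Rightarrow(2)$.} By Theorem~\ref{Thm 1}(2), there are commuting unitaries $U_1,\ldots,U_n$ with $N_i=s_i(U_1,\ldots,U_n)$ for $1\le i\le n-1$ and $N_n=(U_1\cdots U_n)^q$. Since $m=pq$, we get $N_n^p=(U_1\cdots U_n)^{m}$. This is not in general $s_n(U_1,\ldots,U_n)=U_1\cdots U_n$, so Theorem~\ref{Thm 1}(2) cannot be quoted verbatim. We first recheck the normalisation of the cited \cite[Theorem~3.2]{Biswas 2}: the definition of $\theta$ uses $s_i(z^m)$, so presumably there $N_i=s_i(U_1^m,\ldots,U_n^m)$ and $N_n=(U_1\cdots U_n)^q$. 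In that case we set $V_j=U_j^m$, which are commuting unitaries with $N_i=s_i(V)$ and $N_n^p=V_1\cdots V_n=s_n(V)$. By the standard characterisation of $\Gamma_n$-unitaries (commuting unitaries symmetrised), $(N_1,\ldots,N_{n-1},N_n^p)$ is a $\Gamma_n$-unitary. Normality of $N_n$ is part of the definition. Alternatively, we can argue from Theorem~\ref{Thm 1}(3): $N_n^p$ is unitary, $N_i=N_{n-i}^*N_n^p$, and $(\gamma_iN_i)$ is a $\Gamma_{n-1}$-contraction, which is exactly the Biswas--Sastry characterisation of $\Gamma_n$-unitaries \cite{S. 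Pal}. I would use this second route, since it avoids index bookkeeping.

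\emph{$(2)\Rightarrow(1)$.} Suppose $(N_1,\ldots,N_{n-1},N_n^p)$ is a $\Gamma_n$-unitary and $N_n$ is normal. Then $N_n^p$ is unitary. For a normal operator, $N_n^p$ unitary forces $|N_n|^{2p}=I$, hence $|N_n|=I$ by the functional calculus, so $N_n$ is unitary. The $\Gamma_n$-unitary characterisation then gives $N_i=N_{n-i}^*N_n^p$ for $1\le i\le n-1$, and $(\gamma_1N_1,\ldots,\gamma_{n-1}N_{n-1})$ is a $\Gamma_{n-1}$-contraction. This is exactly condition (3) of Theorem~\ref{Thm 1}, so $\mathbf N$ is a $\mathbf{\Theta}_n$-unitary.

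\emph{Main obstacle.} The delicate point is making sure the $\Gamma_n$-unitary characterisation cited from the literature matches the normalisation in Theorem~\ref{Thm 1}(3). Specifically, the scaling constants $\gamma_i$ and the relation $N_i=N_{n-i}^*N_n^p$ must agree with the $\Gamma_n$ conditions $S_i=S_{n-i}^*S_n$ and $(\gamma_iS_i)_{i=1}^{n-1}\in\Gamma_{n-1}$-contraction. If they agree, both directions are immediate.

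If some discrepancy appears, I would fall back on spectral arguments, as in the proof of $(6)\Rightarrow(3)$. Pass to the commutative $C^*$-algebra generated by $\mathbf N$ and verify pointwise on the joint spectrum that $(\lambda_1,\ldots,\lambda_n)\in b\mathbf{\Theta}_n$ if and only if $|\lambda_n|=1$ and $(\lambda_1,\ldots,\lambda_{n-1},\lambda_n^p)\in b\Gamma_n$, using \cite[Theorem~2.5]{Biswas 2}. Normality of $N_n$ is needed exactly so that the joint-spectrum argument applies to all of $\mathbf N$, not only to $N_n^p$.
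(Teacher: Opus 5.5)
Your proof is correct. The direction $(2)\Rightarrow(1)$ is exactly the paper's argument. You read off $N_n^p$ unitary, $N_i=N_{n-i}^*N_n^p$, and the $\gamma_i$-scaled $\Gamma_{n-1}$-contraction condition from the algebraic characterization of $\Gamma_n$-unitaries. You then upgrade $N_n$ to a unitary using normality and land in Theorem~\ref{Thm 1}(3).

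For $(1)\Rightarrow(2)$ you take a different route. You start from Theorem~\ref{Thm 1}(3) and apply the converse half of that same algebraic characterization. The paper instead argues through spectral sets. It uses Theorem~\ref{Thm 1}(4) together with Lemma~\ref{Lem 1}: the map $\pi_p$ sends $\overline{\mathbf{\Theta}}_n$ into $\Gamma_n$, so $(N_1,\ldots,N_{n-1},N_n^p)$ is a $\Gamma_n$-contraction. It then invokes the fact that a $\Gamma_n$-contraction with unitary last entry is a $\Gamma_n$-unitary \cite[Theorem~5.1]{A. Pal}.

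What each buys:
\begin{itemize}
\item Your version is purely algebraic. It makes the theorem a direct transcription of Theorem~\ref{Thm 1}(3) into $\Gamma_n$ language, using one external characterization in both directions.
\item The paper's version does not need the $\Gamma_{n-1}$-contraction condition in the forward direction. It relies on the inclusion $\pi_p(\overline{\mathbf{\Theta}}_n)\subseteq\Gamma_n$, the same mechanism it reuses for $\mathbf{\Theta}_n$-isometries in Theorem~\ref{Thm 6}.
\end{itemize}

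Two small remarks. First, the characterization you invoke is the one the paper cites as \cite[Theorem~4.2]{Biswas}, not \cite{S. Pal}. Second, your wariness about Theorem~\ref{Thm 1}(2) is justified. As printed, it gives $N_n^p=(U_1\cdots U_n)^m$ rather than $s_n(U_1,\ldots,U_n)$, so routing through (3) was the right call, and the spectral fallback is not needed.
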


\begin{proof}
$(1)\Rightarrow(2)$.
Suppose that $\mathbf N=(N_1,\ldots,N_n)$ is a $\mathbf{\Theta}_n$-unitary. By Theorem~\ref{Thm 1}, $\mathbf N$ is a $\mathbf{\Theta}_n$-contraction and $N_n$ is unitary. Hence $N_n^p$ is also unitary. Moreover, Lemma~\ref{Lem 1} implies that
$
(N_1,\ldots,N_{n-1},N_n^p)$
is a $\Gamma_n$-contraction. Therefore, by \cite[Theorem~5.1]{A. Pal},
$
(N_1,\ldots,N_{n-1},N_n^p)$
is a $\Gamma_n$-unitary.

\medskip

\noindent
$(2)\Rightarrow(1)$.
Assume that $N_n$ is normal and
$
(N_1,\ldots,N_{n-1},N_n^p)$
is a $\Gamma_n$-unitary. By \cite[Theorem~4.2]{Biswas},
$N_n^p$ is unitary,
$
(\gamma_1N_1,\ldots,\gamma_{n-1}N_{n-1})$
is a $\Gamma_{n-1}$-contraction, and
\[
N_i=N_{n-i}^*N_n^p,
\quad
1\le i\le n-1.
\]
Since $N_n$ is normal and $N_n^p$ is unitary,
$
\sigma(N_n^p)\subseteq\mathbb T.$
By the spectral mapping theorem,
\[
\sigma(N_n^p)
=
\{\lambda^p:\lambda\in\sigma(N_n)\}.
\]
Hence $|\lambda|=1$ for every $\lambda\in\sigma(N_n)$, and therefore $N_n$ is unitary. The conclusion now follows from Theorem~\ref{Thm 1}. This completes the proof.
\end{proof}
The following theorem shows how every $\mathbf{\Theta}_n$-unitary gives rise to a family of $\mathbf{\Theta}_{n+1}$-unitaries.
\begin{thm}\label{Thm 3}
Let $\mathbf N=(N_1,\ldots,N_n)$ be a commuting $n$-tuple of bounded operators on a Hilbert space $\mathcal H$. If $\mathbf N$ is a $\mathbf{\Theta}_n$-unitary, then
\[
(\alpha^mI+N_1,\,
\alpha^mN_1+N_2,\,
\ldots,\,
\alpha^mN_{n-1}+N_n^p,\,
\alpha^{m/p}N_n)
\]
is a $\mathbf{\Theta}_{n+1}$-unitary for every $\alpha\in\mathbb T$.
\end{thm}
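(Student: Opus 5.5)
The plan is to reduce the statement to the concrete description of $\mathbf{\Theta}_n$-unitaries in Theorem~\ref{Thm 1}, condition (2), and to write the new tuple as a symmetrization of $n+1$ commuting unitaries.

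By Theorem~\ref{Thm 1}, $(1)\Rightarrow(2)$, there are commuting unitaries $U_1,\ldots,U_n$ on $\mathcal H$ with
\[
N_i=\sum_{1\le k_1<\cdots<k_i\le n}U_{k_1}\cdots U_{k_i}\ (1\le i\le n-1),\qquad N_n=(U_1\cdots U_n)^q .
\]
In particular $N_n^p=(U_1\cdots U_n)^{qp}=(U_1\cdots U_n)^m$.

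The main obstacle is matching the powers. Condition (2) for $\mathbf{\Theta}_n$ in the form stated uses $s_i(U)$, not $s_i(U^m)$. To make the $\alpha^m$-shift mesh, I first check which normalization Theorem~\ref{Thm 1}(2) really encodes. Its consistency requires $N_n^p=U_1\cdots U_n$ (this is used in the proof of $(2)\Rightarrow(5)$). That means the theorem is really stated with $W_j:=U_j^m$ playing the role of the unitaries: $N_i=s_i(W)$ and $N_n^p=W_1\cdots W_n$, with $N_n$ a $q$-th root.

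I will therefore work with the invariant formulation. There are commuting unitaries $W_1,\ldots,W_n$ and a unitary $N_n$ commuting with them such that
\[
N_i=s_i(W_1,\ldots,W_n)\ (1\le i\le n-1),\qquad N_n^p=W_1\cdots W_n .
\]
These are exactly the relations that the spectral picture on $b\mathbf{\Theta}_n$ gives via \cite[Theorem~3.2]{Biswas 2}.

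Now set $W_{n+1}=\alpha^m I$, which is unitary since $\alpha\in\mathbb T$ and commutes with everything. The recursion $s_j(W_1,\ldots,W_{n+1})=s_j(W_1,\ldots,W_n)+W_{n+1}s_{j-1}(W_1,\ldots,W_n)$ gives the following.
\begin{itemize}
\item For $j=1$: $s_1=\alpha^mI+N_1$.
\item For $2\le j\le n-1$: $s_j=\alpha^mN_{j-1}+N_j$.
\item For $j=n$: $s_n=\alpha^mN_{n-1}+W_1\cdots W_n=\alpha^mN_{n-1}+N_n^p$.
\end{itemize}
Finally, $\bigl(\alpha^{m/p}N_n\bigr)^p=\alpha^mW_1\cdots W_n=W_1\cdots W_{n+1}$.

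So the proposed tuple has the form $(s_1(W),\ldots,s_n(W),M)$ with $W_1,\ldots,W_{n+1}$ commuting unitaries and $M=\alpha^{m/p}N_n$ a unitary satisfying $M^p=\prod W_j$. By the same characterization \cite[Theorem~3.2]{Biswas 2}, or equivalently via Theorem~\ref{Thm 2}, this is a $\mathbf{\Theta}_{n+1}$-unitary.

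As a cross-check that avoids the normalization issue, I would also argue directly. The tuple consists of commuting normal operators, being polynomials in the commuting normal family $N_1,\ldots,N_n$ together with $I$. By the spectral mapping theorem its joint spectrum is $\pi_\alpha(\sigma(\mathbf N))\subseteq\pi_\alpha(b\mathbf{\Theta}_n)$, where $\pi_\alpha$ is the map from the proof of Lemma~\ref{Lem 2}. It therefore suffices to show $\pi_\alpha(b\mathbf{\Theta}_n)\subseteq b\mathbf{\Theta}_{n+1}$ for $|\alpha|=1$. This is the scalar version of the computation above: a point of $b\mathbf{\Theta}_n$ corresponds to unimodular roots $w_1,\ldots,w_n$ of the polynomial $P$ in \eqref{P}, and applying $\pi_\alpha$ adds the unimodular root $\alpha^m$.
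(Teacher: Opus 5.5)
Your proof is correct, but it takes a different route from the paper.

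\textbf{The paper's route.} It never touches the unitary model or the distinguished boundary.
\begin{enumerate}
\item By Theorem~\ref{Thm 1}, $(1)\Rightarrow(4)$, $\mathbf N$ is a $\mathbf{\Theta}_n$-contraction with $N_n$ unitary.
\item Lemma~\ref{Lem 2} pushes the spectral set forward along $\pi_\alpha$, using $\pi_\alpha(\overline{\mathbf{\Theta}}_n)\subseteq\overline{\mathbf{\Theta}}_{n+1}$. This gives a $\mathbf{\Theta}_{n+1}$-contraction whose last entry $\alpha^{m/p}N_n$ is unitary.
\item Theorem~\ref{Thm 1}, $(4)\Rightarrow(1)$, applied at level $n+1$, finishes.
\end{enumerate}

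\textbf{Your route.} You work directly on the boundary. The scalar identity $\pi_\alpha(\theta(z_1,\ldots,z_n))=\theta(z_1,\ldots,z_n,\alpha)$, which amounts to appending the root $w_{n+1}=\alpha^m$, gives $\pi_\alpha(b\mathbf{\Theta}_n)\subseteq b\mathbf{\Theta}_{n+1}$ for $|\alpha|=1$. Normality of the new tuple and the spectral mapping theorem then finish.

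\textbf{Comparison.} The paper's argument is two lines given its machinery. However, all the weight sits in the nontrivial implication ``$\mathbf{\Theta}$-contraction with unitary last entry $\Rightarrow$ $\mathbf{\Theta}$-unitary'', which goes through $\mathbb E$-unitaries and the cited characterization of $\mathbf{\Theta}_n$-unitaries. Yours is more elementary and more explicit: it shows exactly which boundary point each spectral point lands on. It needs only the description $b\mathbf{\Theta}_k=\theta(\mathbb T^k)$, equivalently ``all roots of $P$ unimodular''. The same identity with $|\alpha|\le 1$ also reproves the inclusion $\pi_\alpha(\overline{\mathbf{\Theta}}_n)\subseteq\overline{\mathbf{\Theta}}_{n+1}$ on which Lemma~\ref{Lem 2} rests.

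\textbf{The normalization issue.} You are right that Theorem~\ref{Thm 1}(2) as printed is inconsistent for $m\ge 2$. Taking $N_n=(U_1\cdots U_n)^q$ gives $N_n^p=(U_1\cdots U_n)^m$, not the $U_1\cdots U_n$ used in the proof of $(2)\Rightarrow(5)$. The paper's proof of this theorem uses only $(1)\Leftrightarrow(4)$ and is unaffected. Your first route, by contrast, depends on the corrected form $N_i=s_i(U_1^m,\ldots,U_n^m)$, $N_n=(U_1\cdots U_n)^q$. In that form, commutation of $N_n$ with $W_j=U_j^m$ is automatic. Alternatively, your appeal to Theorem~\ref{Thm 2} at level $n+1$ avoids needing that commutation at all.

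\textbf{Suggestions.} Your ``cross-check'' is the cleaner, self-contained proof and would be better presented as the main argument. Also, the phrase ``$N_n$ a $q$-th root'' should read ``$N_n$ a $p$-th root of $W_1\cdots W_n$''.
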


\begin{proof}
Let $\mathbf N=(N_1,\ldots,N_n)$ be a $\mathbf{\Theta}_n$-unitary. By Theorem~\ref{Thm 1}, $\mathbf N$ is a $\mathbf{\Theta}_n$-contraction and $N_n$ is unitary. Hence, by Lemma~\ref{Lem 2},
\[
(\alpha^mI+N_1,\,
\alpha^mN_1+N_2,\,
\ldots,\,
\alpha^mN_{n-1}+N_n^p,\,
\alpha^{m/p}N_n)
\]
is a $\mathbf{\Theta}_{n+1}$-contraction for every $\alpha\in\overline{\mathbb D}$.
Now let $\alpha\in\mathbb T$. Since $N_n$ is unitary and $|\alpha|=1$, the operator $\alpha^{m/p}N_n$ is also unitary. Therefore, Theorem~\ref{Thm 1} implies that
\[
(\alpha^mI+N_1,\,
\alpha^mN_1+N_2,\,
\ldots,\,
\alpha^mN_{n-1}+N_n^p,\,
\alpha^{m/p}N_n)
\]
is a $\mathbf{\Theta}_{n+1}$-unitary.
\end{proof}	
The following theorem provides an operator-theoretic analogue of \cite[Proposition~2.11]{Paul}.
\begin{thm}\label{Thm 4}
Let $\mathbf N=(N_1,\ldots,N_n)$ be a commuting $n$-tuple of bounded operators on a Hilbert space $\mathcal H$. The following statements are equivalent.

\begin{enumerate}

\item
$\mathbf N$ is a $\mathbf{\Theta}_n$-unitary.

\item
$N_n$ is normal,
$
\left(
\frac{N_i}{k(i)},
\frac{N_{n-i}}{k(i)},
N_n^p
\right)$
is an $\mathbb E$-unitary for every $1\le i\le n-1$, and
$
(\gamma_1N_1,\ldots,\gamma_{n-1}N_{n-1})$
is a $\Gamma_{n-1}$-contraction.

\end{enumerate}
\end{thm}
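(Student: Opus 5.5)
The plan is to reduce both directions to Theorem~\ref{Thm 1}, going through condition (3) or (4) there, and to use the known characterization of $\mathbb E$-unitaries from \cite{Bhattacharyya}. That characterization says a commuting triple $(A,B,P)$ is an $\mathbb E$-unitary if and only if $P$ is unitary, $\|B\|\le 1$ (equivalently, it is an $\mathbb E$-contraction), and $A=B^*P$.

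\emph{$(1)\Rightarrow(2)$.} Let $\mathbf N$ be a $\mathbf{\Theta}_n$-unitary.
\begin{enumerate}
\item By Theorem~\ref{Thm 1}, $N_n$ is unitary, hence normal.
\item The same theorem gives that $(\gamma_1N_1,\ldots,\gamma_{n-1}N_{n-1})$ is a $\Gamma_{n-1}$-contraction.
\item It also gives that $\mathbf N$ is a $\mathbf{\Theta}_n$-contraction. So Lemma~\ref{Lem 3} shows that $\bigl(\frac{N_i}{k(i)},\frac{N_{n-i}}{k(i)},N_n^p\bigr)$ is an $\mathbb E$-contraction for each $1\le i\le n-1$.
\item Its last entry $N_n^p$ is unitary. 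By \cite[Theorem~5.4]{Bhattacharyya}, as already used in the proof of $(4)\Rightarrow(3)$ of Theorem~\ref{Thm 1}, the triple is an $\mathbb E$-unitary.
\end{enumerate}

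\emph{$(2)\Rightarrow(1)$.} Assume (2) holds. I will verify condition (3) of Theorem~\ref{Thm 1}.
\begin{enumerate}
\item Since $\bigl(\frac{N_i}{k(i)},\frac{N_{n-i}}{k(i)},N_n^p\bigr)$ is an $\mathbb E$-unitary, its third component $N_n^p$ is unitary. One can see this directly: the joint spectrum lies in $b\mathbb E$, where $|x_3|=1$, and $N_n^p$ is normal.
\item The $\mathbb E$-unitary characterization also gives $\frac{N_i}{k(i)}=\bigl(\frac{N_{n-i}}{k(i)}\bigr)^*N_n^p$. Multiplying by $k(i)$, which is real, yields $N_i=N_{n-i}^*N_n^p$ for $1\le i\le n-1$.
\item $N_n$ is normal and $N_n^p$ is unitary, so $\sigma(N_n^p)\subseteq\mathbb T$. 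By the spectral mapping argument in the proof of Theorem~\ref{Thm 2}, $\sigma(N_n)\subseteq\mathbb T$. A normal operator with spectrum in $\mathbb T$ is unitary, so $N_n$ is unitary.
\item The $\Gamma_{n-1}$-contraction hypothesis is assumed directly in (2).
\end{enumerate}
All of condition (3) of Theorem~\ref{Thm 1} now holds, so $\mathbf N$ is a $\mathbf{\Theta}_n$-unitary.

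The main point needing care is the exact form of the tetrablock-unitary characterization being invoked. I need that an $\mathbb E$-unitary $(A,B,P)$ satisfies $P$ unitary and $A=B^*P$. This is exactly the statement in \cite[Theorem~5.4]{Bhattacharyya}. If one prefers not to depend on its precise wording, a fallback is available. The normal tuple $(N_i/k(i),N_{n-i}/k(i),N_n^p)$ has joint spectrum in $b\mathbb E=\{(x_1,x_2,x_3):x_1=\bar x_2x_3,\ |x_3|=1,\ |x_2|\le1\}$. The spectral theorem then gives the identities $N_i=N_{n-i}^*N_n^p$ and $|N_n^p|=I$ directly.
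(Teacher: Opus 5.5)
Your proposal is correct and follows the paper's proof essentially step for step. For $(1)\Rightarrow(2)$ you use Theorem~\ref{Thm 1}, Lemma~\ref{Lem 3} and the tetrablock-unitary characterization; for $(2)\Rightarrow(1)$ you extract $N_i=N_{n-i}^*N_n^p$ and unitarity of $N_n^p$, upgrade to unitarity of $N_n$ by the spectral mapping argument of Theorem~\ref{Thm 2}, and conclude via condition (3) of Theorem~\ref{Thm 1}. Your fallback through the description of $b\mathbb E$ is a harmless extra that the paper does not need.
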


\begin{proof}
$(1)\Rightarrow(2)$.
Suppose that $\mathbf N$ is a $\mathbf{\Theta}_n$-unitary. By Theorem~\ref{Thm 1},
$
(\gamma_1N_1,\ldots,\gamma_{n-1}N_{n-1})$
is a $\Gamma_{n-1}$-contraction and $N_n$ is unitary. In particular, $N_n$ is normal. Moreover, Theorem~\ref{Thm 1} also implies that $\mathbf N$ is a $\mathbf{\Theta}_n$-contraction. Hence, by Lemma~\ref{Lem 3},
$
\left(
\frac{N_i}{k(i)},
\frac{N_{n-i}}{k(i)},
N_n^p
\right)$
is an $\mathbb E$-contraction for every $1\le i\le n-1$. Since $N_n^p$ is unitary, \cite[Theorem~5.4]{Bhattacharyya} yields that
$
\left(
\frac{N_i}{k(i)},
\frac{N_{n-i}}{k(i)},
N_n^p
\right)$
is an $\mathbb E$-unitary for every $1\le i\le n-1$.

\medskip

\noindent
$(2)\Rightarrow(1)$.
Assume that {\rm(2)} holds. Since
$
\left(
\frac{N_i}{k(i)},
\frac{N_{n-i}}{k(i)},
N_n^p
\right)$
is an $\mathbb E$-unitary, \cite[Theorem~5.4]{Bhattacharyya} implies that
\[
N_i=N_{n-i}^*N_n^p,
\quad
1\le i\le n-1,
\]
and that $N_n^p$ is unitary. As $N_n$ is normal, it follows from the spectral mapping theorem (equivalently, by the argument in the proof of Theorem~\ref{Thm 2}) that $N_n$ is unitary. Together with the assumption that
$
(\gamma_1N_1,\ldots,\gamma_{n-1}N_{n-1})$
is a $\Gamma_{n-1}$-contraction, Theorem~\ref{Thm 1} now implies that $\mathbf N$ is a $\mathbf{\Theta}_n$-unitary.
\end{proof}	
	
	\subsection{Characterization of $\mathbf{\Theta}_n$-Isometry}
	
	In the following theorem we characterize $\mathbf{\Theta}_n$-isometry.
\begin{thm}\label{Thm 5}
Let $\mathbf V=(V_1,\ldots,V_n)$ be a commuting $n$-tuple of bounded operators on a Hilbert space $\mathcal H$. Then the following statements are equivalent.

\begin{enumerate}

\item
$\mathbf V$ is a $\mathbf{\Theta}_n$-isometry.

\item
$V_n$ is an isometry,
$
(\gamma_1V_1,\ldots,\gamma_{n-1}V_{n-1})$
is a $\Gamma_{n-1}$-contraction, and
$
V_i=V_{n-i}^*V_n^p,
1\le i\le n-1.$

\item
There exists an orthogonal decomposition
$
\mathcal H=\mathcal H_1\oplus\mathcal H_2$
into common reducing subspaces for
$V_1,\ldots,V_n$
such that
$
(V_1|_{\mathcal H_1},\ldots,V_n|_{\mathcal H_1})$
is a $\mathbf{\Theta}_n$-unitary and
$
(V_1|_{\mathcal H_2},\ldots,V_n|_{\mathcal H_2})$
is a pure $\mathbf{\Theta}_n$-isometry.

\item
$\mathbf V$ is a $\mathbf{\Theta}_n$-contraction and
$V_n$ is an isometry.

\item
$V_n$ is a contraction,
$
(\gamma_1V_1,\ldots,\gamma_{n-1}V_{n-1})$
is a $\Gamma_{n-1}$-contraction, and for every
$\alpha\in\mathbb T$,
\[
\Phi^{(i)}_1
(\alpha^iV_i,\alpha^{n-i}V_{n-i},\alpha^nV_n^p)=0,
\quad \text{and}\quad
\Phi^{(i)}_2
(\alpha^iV_i,\alpha^{n-i}V_{n-i},\alpha^nV_n^p)=0,
\]
for every
$1\le i\le n-1$. 

Moreover, if
$
r(V_i)<k(i),
1\le i\le n-1,$
then the above conditions are also equivalent to:

\item
$V_n$ is a contraction,
$
(\gamma_1V_1,\ldots,\gamma_{n-1}V_{n-1})$
is a $\Gamma_{n-1}$-contraction, and
\[
(k(i)\alpha^nV_n^p-V_{n-i})
(k(i)I-\alpha^iV_i)^{-1}
\quad \text{and}\quad
(k(i)\alpha^nV_n^p-V_i)
(k(i)I-\alpha^{\,n-i}V_{n-i})^{-1}
\]
are isometries for every
$\alpha\in\mathbb T$
and every
$1\le i\le n-1$.

\end{enumerate}
\end{thm}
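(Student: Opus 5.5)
The plan mirrors the proof of Theorem~\ref{Thm 1}. The equivalences $(1)\Leftrightarrow(2)\Leftrightarrow(3)$ will be taken from the structure theory of $\mathbf{\Theta}_n$-isometries in \cite{Biswas 2}, namely the analogue of \cite[Theorem~3.2]{Biswas 2} for isometries together with the Wold-type decomposition. We then prove
\[
(1)\Rightarrow(4)\Rightarrow(2)\Rightarrow(5)\Rightarrow(2),
\]
and, under the spectral radius hypothesis, $(5)\Leftrightarrow(6)$.

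\textbf{The implication $(1)\Rightarrow(4)$.} Write $\mathbf V=\mathbf N|_{\mathcal H}$, where $\mathbf N$ is a $\mathbf{\Theta}_n$-unitary on $\mathcal K\supseteq\mathcal H$. For any polynomial $f$,
\[
\|f(\mathbf V)\|\le\|f(\mathbf N)\|\le\|f\|_{\infty,\overline{\mathbf{\Theta}}_n},
\]
by Theorem~\ref{Thm 1}. Theorem~\ref{Theta_n Contraction Char} then makes $\mathbf V$ a $\mathbf{\Theta}_n$-contraction. Moreover $V_n$ is the restriction of the unitary $N_n$ to an invariant subspace, so it is an isometry.

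\textbf{The implication $(4)\Rightarrow(2)$.} By \cite[Lemma~2.7]{Biswas 2}, $(\gamma_1V_1,\ldots,\gamma_{n-1}V_{n-1})$ is a $\Gamma_{n-1}$-contraction. By Lemma~\ref{Lem 3}, each triple $\bigl(V_i/k(i),V_{n-i}/k(i),V_n^p\bigr)$ is an $\mathbb E$-contraction whose last entry $V_n^p$ is an isometry. The characterization of $\mathbb E$-isometries in \cite{Bhattacharyya} therefore makes each triple an $\mathbb E$-isometry. In particular $V_i=V_{n-i}^*V_n^p$.

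\textbf{The implication $(2)\Rightarrow(5)$.} This is a direct substitution into \eqref{Phi_1} and \eqref{Phi_2}. Use $|\alpha|=1$, $V_n^{*p}V_n^p=I$, and $V_{n-i}^*V_n^p=V_i$. Then
\[
V_{n-i}^*V_{n-i}=V_n^{*p}V_iV_i^*V_n^p ,
\]
and since $V_i$ commutes with $V_n^p$ and $V_n^p$ is an isometry, this equals $V_i^*V_i$. Hence every term of $\Phi^{(i)}_1$ and $\Phi^{(i)}_2$ vanishes.

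\textbf{The implication $(5)\Rightarrow(2)$.} This is the main obstacle, because $V_n$ is only assumed to be a contraction. Add $\Phi^{(i)}_1(\alpha\cdot)=0$ and $\Phi^{(i)}_2(\beta\cdot)=0$ as in the proof of $(6)\Rightarrow(3)$ in Theorem~\ref{Thm 1}. This gives
\[
k(i)\bigl(I-V_n^{*p}V_n^p\bigr)=\operatorname{Re}\bigl(\alpha^{n-i}X\bigr)+\operatorname{Re}\bigl(\beta^{i}Y\bigr)
\]
for all $\alpha,\beta\in\mathbb T$, where $X=V_{n-i}-V_i^*V_n^p$ and $Y=V_i-V_{n-i}^*V_n^p$. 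The left side does not depend on $\alpha$ and $\beta$. Averaging over $\alpha$ and $\beta$ (equivalently, letting $\alpha^{n-i}$ and $\beta^i$ range over $\pm1,\pm i$) kills the right side, so $V_n^{*p}V_n^p=I$, and then $X=Y=0$. Since $V_n$ is a contraction with $\|V_n^px\|=\|x\|$ for all $x$, we get $\|V_nx\|=\|x\|$, so $V_n$ is an isometry. This yields (2).

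\textbf{The equivalence $(5)\Leftrightarrow(6)$ when $r(V_i)<k(i)$.} The hypothesis makes $k(i)I-\alpha^iV_i$ invertible for $\alpha\in\mathbb T$. Write
\[
W=\bigl(k(i)\alpha^nV_n^p-V_{n-i}\bigr)\bigl(k(i)I-\alpha^iV_i\bigr)^{-1}.
\]
A standard computation, the same one behind Proposition~\ref{Prop 1}, shows
\[
\bigl(k(i)I-\alpha^iV_i\bigr)^*(I-W^*W)\bigl(k(i)I-\alpha^iV_i\bigr)=\Phi^{(i)}_1(\alpha^iV_i,\alpha^{n-i}V_{n-i},\alpha^nV_n^p),
\]
up to the harmless phase adjustment $\alpha^{n-i}\mapsto\alpha^{n-i}$ arising from $|\alpha|=1$. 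Hence $W$ is an isometry exactly when $\Phi^{(i)}_1=0$. The second operator in (6) is handled by the same computation with $\Phi^{(i)}_2$, which gives (5)$\Leftrightarrow$(6). I expect the sign and phase bookkeeping in this identity to be the only delicate part; I would check it by expanding both sides.
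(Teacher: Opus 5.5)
Your argument for (1)--(5) follows the paper's route and is sound. Your (5)$\Rightarrow$(2) is in fact more explicit than the paper's. Averaging over $\alpha,\beta$ gives $V_n^{*p}V_n^p=I$ directly, whereas the paper only points to Theorem~\ref{Thm 1}, where $N_n$ was already assumed unitary.

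Two small fixes. First, in (2)$\Rightarrow$(5), the equality $V_{n-i}^*V_{n-i}=V_i^*V_i$ comes from
\[
V_{n-i}^*V_{n-i}=V_{n-i}^*V_i^*V_n^p=V_i^*V_{n-i}^*V_n^p=V_i^*V_i,
\]
that is, from $V_iV_{n-i}=V_{n-i}V_i$, not from $V_i$ commuting with $V_n^p$. Second, ``every term vanishes'' requires reading $T_{n-i}T_n^{*p}$ in the last term of \eqref{Phi_1} as $T_n^{*p}T_{n-i}$, which your $\operatorname{Re}$ notation presupposes. As printed, that term becomes $V_i^*-V_{n-i}V_n^{*p}=[V_n^{*p},V_{n-i}]$, which need not vanish.

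\textbf{The genuine gap is (5)$\Leftrightarrow$(6).} The identity you propose is false. The ``phase adjustment $\alpha^{n-i}\mapsto\alpha^{n-i}$'' is the identity map, and it hides exactly where the argument breaks. Write $A=V_i$, $B=V_{n-i}$, $P=V_n^p$, $k=k(i)$, $|\alpha|=1$. Expanding,
\[
\begin{aligned}
&(kI-\alpha^iA)^*(kI-\alpha^iA)-(k\alpha^nP-B)^*(k\alpha^nP-B)\\
&\qquad=k^2(I-P^*P)+(A^*A-B^*B)-2k\operatorname{Re}(\alpha^iA)+2k\operatorname{Re}(\alpha^nB^*P),
\end{aligned}
\]
whereas $\Phi^{(i)}_1$ carries $+2k\operatorname{Re}(\alpha^iB^*P)$ in the last slot.

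No bookkeeping can repair this, because (6) as printed is not equivalent to (1)--(5). For a $\mathbf{\Theta}_n$-isometry we have $P^*P=I$, $A^*A=B^*B$ and $B^*P=A$, so the displayed difference equals $2k\operatorname{Re}\bigl((\alpha^n-\alpha^i)V_i\bigr)$. The exponents $\pm n,\pm i$ are distinct, so comparing Fourier coefficients in $\alpha$ shows this vanishes on all of $\mathbb T$ only if $V_i=0$. Concretely, take $n=2$ and the scalar pair $(V_1,V_2)=(1,1)$ on $\mathbb C$. It satisfies (2), and $r(V_1)=1<2=k(1)$. Yet $(2\alpha^2-1)(2-\alpha)^{-1}$ has modulus $1/3$ at $\alpha=-1$. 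The paper's own ``follows immediately from the preceding arguments'' does not address this either.

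Your computation does go through once (6) is corrected to use the numerators $k(i)\alpha^nV_n^p-\alpha^{n-i}V_{n-i}$ and $k(i)\alpha^nV_n^p-\alpha^iV_i$. Since $\alpha^n\overline{\alpha}^{\,n-i}=\alpha^i$, one gets exactly
\[
\begin{aligned}
&(kI-\alpha^iA)^*(kI-\alpha^iA)-(k\alpha^nP-\alpha^{n-i}B)^*(k\alpha^nP-\alpha^{n-i}B)\\
&\qquad=\Phi^{(i)}_1(\alpha^iA,\alpha^{n-i}B,\alpha^nP),
\end{aligned}
\]
and similarly for $\Phi^{(i)}_2$. The operators $kI-\alpha^iV_i$ and $kI-\alpha^{n-i}V_{n-i}$ are invertible, since $r(V_i),r(V_{n-i})<k(i)$ and $k(n-i)=k(i)$. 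That invertibility then gives the corrected equivalence.
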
	
	
	\begin{proof}
		The equivalence of {\rm(1)}, {\rm(2)}, and {\rm(3)} is established in \cite[Theorem~3.10]{Biswas 2}. We prove the implications
\[
(1)\Rightarrow(4)\Rightarrow(5)\Rightarrow(2)
\quad\text{and}\quad
(5)\Rightarrow(6)\Rightarrow(5).
\]
		
		\medskip
\noindent
$(1)\Rightarrow(4)$.
Suppose that $\mathbf V=(V_1,\ldots,V_n)$ is a $\mathbf{\Theta}_n$-isometry. Then, by definition, there exist a Hilbert space $\mathcal K\supseteq\mathcal H$ and a $\mathbf{\Theta}_n$-unitary
$
\mathbf N=(N_1,\ldots,N_n)$
on $\mathcal K$ such that
\[
N_i|_{\mathcal H}=V_i,
\qquad
1\le i\le n.
\]
Let $p$ be a polynomial in $n$ complex variables. Since $\mathcal H$ is invariant for each $N_i$,
$
p(\mathbf V)
=
p(\mathbf N)|_{\mathcal H},$
and therefore
\[
\|p(\mathbf V)\|
=
\|p(\mathbf N)|_{\mathcal H}\|
\le
\|p(\mathbf N)\|
\le
\|p\|_{\infty,\overline{\mathbf{\Theta}}_n},
\]
where the last inequality follows from Theorem~\ref{Thm 1}. Hence $\mathbf V$ is a $\mathbf{\Theta}_n$-contraction. Moreover, since $\mathbf N$ is a $\mathbf{\Theta}_n$-unitary, Theorem~\ref{Thm 2} implies that $N_n$ is unitary. Consequently,
$
V_n=N_n|_{\mathcal H}$
is an isometry. Thus $\mathbf V$ is a $\mathbf{\Theta}_n$-contraction with $V_n$ an isometry, proving {\rm(4)}.	

\medskip
\noindent
$(4)\Rightarrow(5)$.
Assume that {\rm(4)} holds. Then $\mathbf V$ is a $\mathbf{\Theta}_n$-contraction and $V_n$ is an isometry. By \cite[Lemma~2.7]{Biswas 2},
$
(\gamma_1V_1,\ldots,\gamma_{n-1}V_{n-1})$
is a $\Gamma_{n-1}$-contraction. Moreover, Lemma~\ref{Lem 3} implies that
$
\left(
\frac{V_i}{k(i)},
\frac{V_{n-i}}{k(i)},
V_n^p
\right)$
is an $\mathbb E$-contraction for every $1\le i\le n-1$. Since $V_n$ is an isometry, $V_n^p$ is also an isometry. Hence, by \cite[Theorem~5.7]{Bhattacharyya},
\[
V_i=V_{n-i}^*V_n^p,
\qquad
1\le i\le n-1.
\]
Consequently,
\[
\Phi^{(i)}_1(\alpha^iV_i,\alpha^{n-i}V_{n-i},\alpha^nV_n^p)=0
\quad \text{and}\quad
\Phi^{(i)}_2(\alpha^iV_i,\alpha^{n-i}V_{n-i},\alpha^nV_n^p)=0
\]
for every $\alpha\in\mathbb T$ and every $1\le i\le n-1$. Therefore, {\rm(5)} holds.
	
\medskip

\noindent
$(5)\Rightarrow(2)$.
The proof that
$
V_i=V_{n-i}^*V_n^p,
1\le i\le n-1,$
and that $V_n^p$ is an isometry is identical to that of $(6)\Rightarrow(3)$ in Theorem~\ref{Thm 1}. It therefore remains to show that $V_n$ is an isometry.
Since $V_n$ is a contraction and $V_n^p$ is an isometry, for every $x\in\mathcal H$,
\[
\|x\|
=
\|V_n^px\|
\le
\|V_n^{p-1}x\|
\le
\cdots
\le
\|V_nx\|
\le
\|x\|.
\]
Hence all the above inequalities are equalities. In particular,
\[
\|V_nx\|=\|x\|,
\qquad
x\in\mathcal H,
\]
which shows that $V_n$ is an isometry. Therefore, {\rm(2)} holds.		

\medskip
\noindent
The equivalence of {\rm(5)} and {\rm(6)} follows immediately from the preceding arguments. This completes the proof.	\end{proof}
	
	\begin{thm}\label{Thm 6}
Let $\mathbf{V}=(V_1,\ldots,V_n)$ be a commuting $n$-tuple of bounded operators on a Hilbert space $\mathcal H$. Then the following assertions are equivalent.
\begin{enumerate}
\item $\mathbf{V}$ is a $\mathbf{\Theta}_n$-isometry.

\item $V_n$ is a contraction and
$
(V_1,\ldots,V_{n-1},V_n^p)$
is a $\Gamma_n$-isometry.
\end{enumerate}
\end{thm}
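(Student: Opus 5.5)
We prove the two implications separately, in the same spirit as Theorem~\ref{Thm 2}, using the characterization of $\mathbf{\Theta}_n$-isometries in Theorem~\ref{Thm 5} together with the known structure theory of $\Gamma_n$-isometries.

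For $(1)\Rightarrow(2)$: if $\mathbf V$ is a $\mathbf{\Theta}_n$-isometry, Theorem~\ref{Thm 5} shows that $\mathbf V$ is a $\mathbf{\Theta}_n$-contraction and that $V_n$ is an isometry. In particular, $V_n$ is a contraction. By Lemma~\ref{Lem 1}, $(V_1,\ldots,V_{n-1},V_n^p)$ is a $\Gamma_n$-contraction, and its last entry $V_n^p$ is an isometry. The characterization of $\Gamma_n$-isometries in \cite{A. Pal} (or in \cite{Biswas}) says that a $\Gamma_n$-contraction whose last component is an isometry is a $\Gamma_n$-isometry. Hence $(V_1,\ldots,V_{n-1},V_n^p)$ is a $\Gamma_n$-isometry.

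For $(2)\Rightarrow(1)$: assume $V_n$ is a contraction and $(V_1,\ldots,V_{n-1},V_n^p)$ is a $\Gamma_n$-isometry. We invoke the known characterization of $\Gamma_n$-isometries, the isometric analogue of \cite[Theorem~4.2]{Biswas} already used in the proof of Theorem~\ref{Thm 2}. It gives three facts:
\begin{itemize}
\item $V_n^p$ is an isometry;
\item $(\gamma_1V_1,\ldots,\gamma_{n-1}V_{n-1})$ is a $\Gamma_{n-1}$-contraction;
\item $V_i=V_{n-i}^*V_n^p$ for $1\le i\le n-1$.
\end{itemize}
Next we show that $V_n$ is an isometry, repeating the argument in $(5)\Rightarrow(2)$ of Theorem~\ref{Thm 5}. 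For each $x$,
\[
\|x\|=\|V_n^px\|\le\|V_n^{p-1}x\|\le\cdots\le\|V_nx\|\le\|x\|,
\]
so all these inequalities are equalities and $\|V_nx\|=\|x\|$. Therefore condition (2) of Theorem~\ref{Thm 5} holds, and $\mathbf V$ is a $\mathbf{\Theta}_n$-isometry.

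The main obstacle is making sure the cited $\Gamma_n$-isometry characterization really delivers the three facts above with the scaling $\gamma_i=\frac{n-i}{n}$. If the citation provides the $\Gamma_{n-1}$-contraction condition in a different normalization, we would instead use the Wold-type decomposition of $\Gamma_n$-isometries. Under it, $(V_1,\ldots,V_{n-1},V_n^p)$ restricts to a $\Gamma_n$-unitary on the unitary part of $V_n^p$, and Theorem~\ref{Thm 2} handles that summand. On the pure part, we would use the model of pure $\Gamma_n$-isometries, or directly the identities $V_i=V_{n-i}^*V_n^p$. Combined with Theorem~\ref{Thm 5}(3), these give the conclusion on that summand too.
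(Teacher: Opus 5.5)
Your proposal is correct and follows the paper's proof essentially step for step. For $(1)\Rightarrow(2)$ you use Theorem~\ref{Thm 5}, Lemma~\ref{Lem 1}, and the fact that a $\Gamma_n$-contraction with isometric last entry is a $\Gamma_n$-isometry, which the paper obtains by applying Theorem~\ref{Thm 5} again in the case $m=p=1$; for $(2)\Rightarrow(1)$ you use the $\Gamma_n$-isometry characterization (the paper cites \cite[Theorem~4.12]{Biswas}, which already supplies the $\gamma_i$-scaled $\Gamma_{n-1}$-contraction condition, so your contingency paragraph is unnecessary), the same norm-chain argument to make $V_n$ an isometry, and condition (2) of Theorem~\ref{Thm 5}.
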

	
\begin{proof}
Let $\mathbf{V}=(V_1,\ldots,V_n)$ be a $\mathbf{\Theta}_n$-isometry.

$(1)\Rightarrow(2):$ By Theorem~\ref{Thm 5}, $\mathbf{V}$ is a
$\mathbf{\Theta}_n$-contraction and $V_n$ is an isometry. Since every
isometry is a contraction, $V_n$ is a contraction. Moreover, by
Lemma~\ref{Lem 1},
$
(V_1,\ldots,V_{n-1},V_n^p)$
is a $\Gamma_n$-contraction. As $V_n$ is an isometry, so is $V_n^p$.
Applying Theorem~\ref{Thm 5} once again, we conclude that
$
(V_1,\ldots,V_{n-1},V_n^p)$
is a $\Gamma_n$-isometry. This proves $(1)\Rightarrow(2)$.

\medskip

$(2)\Rightarrow(1):$ Suppose that $V_n$ is a contraction and $
(V_1,\ldots,V_{n-1},V_n^p)$
is a $\Gamma_n$-isometry. By \cite[Theorem~4.12]{Biswas},
$V_n^p$ is an isometry,
$(\gamma_1V_1,\ldots,\gamma_{n-1}V_{n-1})$
is a $\Gamma_{n-1}$-contraction, and
\[
V_i=V_{n-i}^*V_n^p,\qquad 1\le i\le n-1.
\]
Therefore, by \cite[Theorem~3.10]{Biswas 2}, it is enough to show that
$V_n$ is an isometry in order to conclude that
$\mathbf{V}$ is a $\mathbf{\Theta}_n$-isometry. The proof that $V_n$ is
an isometry is identical to the argument used in the implication
$(5)\Rightarrow(2)$ of Theorem~\ref{Thm 5}. Hence,
$\mathbf{V}$ is a $\mathbf{\Theta}_n$-isometry.
\end{proof}	

\begin{thm}\label{Thm 7}
Let $\mathbf{V}=(V_1,\ldots,V_n)$ be a commuting $n$-tuple of bounded operators on a Hilbert space $\mathcal H$. If $\mathbf{V}$ is a $\mathbf{\Theta}_n$-isometry, then
\[
(\alpha^mI+V_1,\,
\alpha^mV_1+V_2,\,
\ldots,\,
\alpha^mV_{n-1}+V_n^p,\,
\alpha^{m/p}V_n)
\]
is a $\mathbf{\Theta}_{n+1}$-isometry for every $\alpha\in\mathbb T$.
\end{thm}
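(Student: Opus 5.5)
The plan is to follow the pattern of Theorem~\ref{Thm 3}, replacing the unitary characterization (Theorem~\ref{Thm 1}) by the isometric one (Theorem~\ref{Thm 5}), specifically the equivalence $(1)\Leftrightarrow(4)$ there. Fix $\alpha\in\mathbb T$. Since $\mathbf V$ is a $\mathbf{\Theta}_n$-isometry, Theorem~\ref{Thm 5} gives that $\mathbf V$ is a $\mathbf{\Theta}_n$-contraction and $V_n$ is an isometry. Lemma~\ref{Lem 2} then shows that
\[
\mathbf W=(\alpha^mI+V_1,\,\alpha^mV_1+V_2,\,\ldots,\,\alpha^mV_{n-1}+V_n^p,\,\alpha^{m/p}V_n)
\]
is a $\mathbf{\Theta}_{n+1}$-contraction; this holds for all $\alpha\in\overline{\mathbb D}$, in particular for $\alpha\in\mathbb T$. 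The tuple $\mathbf W$ is commuting, since each entry is a polynomial in the commuting $V_j$.

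The last coordinate of $\mathbf W$ is $\alpha^{m/p}V_n=\alpha^qV_n$. Because $|\alpha|=1$, it is a unimodular multiple of an isometry and hence an isometry. So $\mathbf W$ is a $\mathbf{\Theta}_{n+1}$-contraction whose last entry is an isometry. Applying Theorem~\ref{Thm 5} with $n+1$ in place of $n$ (implication $(4)\Rightarrow(1)$), we conclude that $\mathbf W$ is a $\mathbf{\Theta}_{n+1}$-isometry.

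The only point needing care is that Theorem~\ref{Thm 5} is stated for general $n$ with the same $m,p$, so it applies verbatim at level $n+1$. The $(4)\Rightarrow(1)$ direction there was obtained through $(5)$ and $(2)$ together with \cite[Theorem~3.10]{Biswas 2}, all of which are valid for every $n\ge 2$.

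As a cross-check, one could instead extend the $\mathbf{\Theta}_n$-unitary $\mathbf N\supseteq\mathbf V$ to $\mathcal K$. Theorem~\ref{Thm 3} makes the corresponding tuple built from $\mathbf N$ a $\mathbf{\Theta}_{n+1}$-unitary. Its restriction to $\mathcal H$ is exactly $\mathbf W$, because $\mathcal H$ is invariant under each $N_j$ and the entries of the tuple are polynomials in the $N_j$. This verifies the definition of a $\mathbf{\Theta}_{n+1}$-isometry directly and confirms the conclusion without any further computation.
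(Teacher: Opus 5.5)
Your argument is correct and is essentially the paper's proof: Theorem~\ref{Thm 5} gives that $\mathbf V$ is a $\mathbf{\Theta}_n$-contraction with $V_n$ an isometry, Lemma~\ref{Lem 2} makes the new tuple a $\mathbf{\Theta}_{n+1}$-contraction, $|\alpha|=1$ keeps $\alpha^{m/p}V_n$ an isometry, and Theorem~\ref{Thm 5} at level $n+1$ finishes. Your cross-check is also valid: restrict to the invariant subspace $\mathcal H$ the $\mathbf{\Theta}_{n+1}$-unitary that Theorem~\ref{Thm 3} builds from the ambient $\mathbf{\Theta}_n$-unitary $\mathbf N$. It verifies the definition of a $\mathbf{\Theta}_{n+1}$-isometry directly and bypasses the characterization in Theorem~\ref{Thm 5}.
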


\begin{proof}
Suppose that $\mathbf{V}$ is a $\mathbf{\Theta}_n$-isometry. By Theorem~\ref{Thm 5}, $\mathbf{V}$ is a $\mathbf{\Theta}_n$-contraction and $V_n$ is an isometry. Hence, by Lemma~\ref{Lem 2},
\[
(\alpha^mI+V_1,\,
\alpha^mV_1+V_2,\,
\ldots,\,
\alpha^mV_{n-1}+V_n^p,\,
\alpha^{m/p}V_n)
\]
is a $\mathbf{\Theta}_{n+1}$-contraction for every $\alpha\in\overline{\mathbb D}$. Since $|\alpha|=1$, the operator $\alpha^{m/p}V_n$ is also an isometry. Therefore, another application of Theorem~\ref{Thm 5} shows that
\[
(\alpha^mI+V_1,\,
\alpha^mV_1+V_2,\,
\ldots,\,
\alpha^mV_{n-1}+V_n^p,\,
\alpha^{m/p}V_n)
\]
is a $\mathbf{\Theta}_{n+1}$-isometry for every $\alpha\in\mathbb T$.
\end{proof}	
	
The following theorem finds a connection of $\mathbf{\Theta}_n$-isometry with $\mathbb{E}$-isometry.
	
	\begin{thm}\label{Thm 8}
		Let $\mathbf{V} = (V_1, \dots, V_n)$ be a commuting $n$-tuple of bounded operators on a Hilbert space $\mathcal{H}$. Then the following assertions are equivalent.		\begin{enumerate}
			\item $\mathbf{V}$ is a $\mathbf{\Theta}_n$-isometry.
			
\item $V_n$ is a contraction,
$
\left(
\frac{V_i}{k(i)},
\frac{V_{n-i}}{k(i)},
V_n^p
\right)$
is a $\mathbb E$-isometry for every $1\le i\le n-1$, and
\\$
(\gamma_1V_1,\ldots,\gamma_{n-1}V_{n-1})$
is a $\Gamma_{n-1}$-contraction.
		\end{enumerate}
	\end{thm}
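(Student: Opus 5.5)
The plan mirrors the proof of Theorem~\ref{Thm 4}, replacing unitaries by isometries and using Theorem~\ref{Thm 5}(2) as the pivot. Both directions rest on the structure theorem for $\mathbb E$-isometries, \cite[Theorem~5.7]{Bhattacharyya}. We use it in two ways. First, an $\mathbb E$-contraction $(A,B,P)$ with $P$ an isometry is an $\mathbb E$-isometry. Second, if $(A,B,P)$ is an $\mathbb E$-isometry, then $P$ is an isometry and $A=B^*P$ (equivalently $B=A^*P$).

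\textbf{$(1)\Rightarrow(2)$.} Suppose $\mathbf V$ is a $\mathbf{\Theta}_n$-isometry. By Theorem~\ref{Thm 5}, $\mathbf V$ is a $\mathbf{\Theta}_n$-contraction, $V_n$ is an isometry (hence a contraction), and $(\gamma_1V_1,\ldots,\gamma_{n-1}V_{n-1})$ is a $\Gamma_{n-1}$-contraction. Lemma~\ref{Lem 3} then shows that $\bigl(\frac{V_i}{k(i)},\frac{V_{n-i}}{k(i)},V_n^p\bigr)$ is an $\mathbb E$-contraction for each $1\le i\le n-1$. Its last entry $V_n^p$ is an isometry, so \cite[Theorem~5.7]{Bhattacharyya} upgrades the triple to an $\mathbb E$-isometry. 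This gives (2).

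\textbf{$(2)\Rightarrow(1)$.} Fix $1\le i\le n-1$ and apply \cite[Theorem~5.7]{Bhattacharyya} to the $\mathbb E$-isometry $\bigl(\frac{V_i}{k(i)},\frac{V_{n-i}}{k(i)},V_n^p\bigr)$. This gives that $V_n^p$ is an isometry and
\[
\frac{V_i}{k(i)}=\Bigl(\frac{V_{n-i}}{k(i)}\Bigr)^{*}V_n^p,
\]
that is, $V_i=V_{n-i}^*V_n^p$, since $k(i)$ is real. Next we show $V_n$ is an isometry, using exactly the chain of inequalities from the step $(5)\Rightarrow(2)$ of Theorem~\ref{Thm 5}:
\[
\|x\|=\|V_n^px\|\le\|V_n^{p-1}x\|\le\cdots\le\|V_nx\|\le\|x\|,
\]
which forces $\|V_nx\|=\|x\|$. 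Combined with the assumed $\Gamma_{n-1}$-contractivity of $(\gamma_1V_1,\ldots,\gamma_{n-1}V_{n-1})$, condition (2) of Theorem~\ref{Thm 5} holds, so $\mathbf V$ is a $\mathbf{\Theta}_n$-isometry.

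The main point needing care is the exact form of \cite[Theorem~5.7]{Bhattacharyya}. We need the characterization ``$P$ isometry, $A=B^*P$, and $\|B\|\le1$'', or equivalently ``$\mathbb E$-contraction with $P$ isometric''. We also need the scaling by $k(i)$ to pass through the relation $A=B^*P$ correctly; it does, because the factor $1/k(i)$ appears on both sides. Everything else follows from results already established above.
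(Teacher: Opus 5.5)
Your proposal is correct and follows essentially the same route as the paper. For $(1)\Rightarrow(2)$ both use Theorem~\ref{Thm 5}, then Lemma~\ref{Lem 3}, then the $\mathbb E$-isometry characterization applied to the isometry $V_n^p$. For $(2)\Rightarrow(1)$ both use \cite[Theorem~5.7]{Bhattacharyya} to get $V_n^p$ isometric with $V_i=V_{n-i}^*V_n^p$, then the norm-chain argument to make $V_n$ an isometry, and finally Theorem~\ref{Thm 5}(2).
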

	
\begin{proof}
Suppose that $\mathbf{V}$ is a $\mathbf{\Theta}_n$-isometry.

$(1)\Rightarrow(2):$
By Theorem~\ref{Thm 5},
$\mathbf{V}$ is a $\mathbf{\Theta}_n$-contraction,
$(\gamma_1V_1,\ldots,\gamma_{n-1}V_{n-1})$
is a $\Gamma_{n-1}$-contraction,
$V_n$ is an isometry, and
$
V_i=V_{n-i}^*V_n^p,
1\le i\le n-1.$
Since $V_n$ is an isometry, so is $V_n^p$.
Therefore, Lemma~\ref{Lem 3} implies that
$
\left(
\frac{V_i}{k(i)},
\frac{V_{n-i}}{k(i)},
V_n^p
\right)$
is an $\mathbb E$-contraction.
As $V_n^p$ is an isometry, \cite[Theorem~5.4]{Bhattacharyya} now yields that
$
\left(
\frac{V_i}{k(i)},
\frac{V_{n-i}}{k(i)},
V_n^p
\right)$
is an $\mathbb E$-isometry.
Hence (2) follows.

\medskip

$(2)\Rightarrow(1):$
Suppose that $
\left(
\frac{V_i}{k(i)},
\frac{V_{n-i}}{k(i)},
V_n^p
\right)$
is an $\mathbb E$-isometry for every $1\le i\le n-1$, and that
$(\gamma_1V_1,\ldots,\gamma_{n-1}V_{n-1})$
is a $\Gamma_{n-1}$-contraction.
By \cite[Theorem~5.7]{Bhattacharyya},
$V_n^p$ is an isometry,
\[
V_i=V_{n-i}^*V_n^p \quad \text{and}\quad
\|V_i\|\le k(i),
\quad
1\le i\le n-1.
\]
The proof that $V_n$ is an isometry is identical to that of the implication
$(2)\Rightarrow(1)$ of Theorem~\ref{Thm 6}. Consequently,
Theorem~\ref{Thm 5} implies that
$\mathbf{V}$ is a $\mathbf{\Theta}_n$-isometry.
\end{proof}	
	
\section{Orthogonal Decomposition of $\mathbf{\Theta}_n$-Contractions}\label{Decomposition}

Every contraction $T$ on a Hilbert space $\mathcal H$ admits a canonical orthogonal decomposition
$
\mathcal H=\mathcal H_1\oplus\mathcal H_2,$
where $T|_{\mathcal H_1}$ is unitary and $T|_{\mathcal H_2}$ is a completely non-unitary (c.n.u.) contraction. This decomposition, commonly referred to as the \emph{canonical decomposition} (or \emph{orthogonal decomposition}), is a fundamental result in the theory of contractions; see \cite[Theorem~1.12.1]{Bhat} and \cite[Theorem~3.2]{Nagy}. The main objective of this section is to establish an analogous decomposition for $\mathbf{\Theta}_n$-contractions.
We begin by recalling the following lemma from \cite[Lemma~2.9]{Roy}, which will play a key role in the proof of the decomposition theorem.

\begin{lem}\label{Numerical Radius}
The numerical radius of an operator $X$ is not greater than one if and only if
\[
\operatorname{Re}(zX)\le I,
\qquad
z\in\mathbb T.
\]
\end{lem}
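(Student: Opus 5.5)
The plan is to reduce both conditions to a statement about the numerical range, using the elementary identity $\sup_{z\in\mathbb T}\operatorname{Re}(z\lambda)=|\lambda|$ for a complex number $\lambda$. Recall that the numerical radius is
\[
w(X)=\sup\{|\langle Xh,h\rangle| : h\in\mathcal H,\ \|h\|=1\}.
\]
For a fixed $z\in\mathbb T$, the operator $\operatorname{Re}(zX)=\tfrac12(zX+\overline z X^*)$ is self-adjoint. For every unit vector $h$ it satisfies
\[
\langle \operatorname{Re}(zX)h,h\rangle=\operatorname{Re}\bigl(z\langle Xh,h\rangle\bigr).
\]
Since $\operatorname{Re}(zX)$ is self-adjoint, the operator inequality $\operatorname{Re}(zX)\le I$ holds exactly when $\operatorname{Re}\bigl(z\langle Xh,h\rangle\bigr)\le 1$ for every unit vector $h$.

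For the forward direction, assume $w(X)\le1$. For any $z\in\mathbb T$ and any unit vector $h$,
\[
\operatorname{Re}\bigl(z\langle Xh,h\rangle\bigr)\le |z|\,|\langle Xh,h\rangle|\le w(X)\le 1 .
\]
By the observation above, this gives $\operatorname{Re}(zX)\le I$ for every $z\in\mathbb T$.

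For the converse, assume $\operatorname{Re}(zX)\le I$ for all $z\in\mathbb T$. Fix a unit vector $h$ and set $\lambda=\langle Xh,h\rangle$. If $\lambda=0$ there is nothing to prove. Otherwise choose $z=\overline{\lambda}/|\lambda|\in\mathbb T$, so that $z\lambda=|\lambda|$. Then
\[
|\lambda|=\operatorname{Re}(z\lambda)=\langle \operatorname{Re}(zX)h,h\rangle\le\langle h,h\rangle=1 .
\]
Taking the supremum over all unit vectors $h$ yields $w(X)\le1$.

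The argument is elementary, and there is no real obstacle. The only point needing care is that the sesquilinear identity $\langle\operatorname{Re}(zX)h,h\rangle=\operatorname{Re}\langle zXh,h\rangle$ is applied with $\operatorname{Re}(zX)$ understood as $\tfrac12(zX+\overline zX^*)$. It is also important that $z$ is chosen depending on $h$; this is legitimate because the hypothesis holds uniformly for all $z\in\mathbb T$.
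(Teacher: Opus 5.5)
Your proof is correct and complete. Combining the identity $\langle \operatorname{Re}(zX)h,h\rangle=\operatorname{Re}\bigl(z\langle Xh,h\rangle\bigr)$ with the $h$-dependent choice $z=\overline{\lambda}/|\lambda|$ is the standard argument for this fact.

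The paper gives no proof of its own; it quotes the statement from Bhattacharyya--Pal--Roy (their Lemma~2.9), so there is no different route to compare with.

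One point about how the paper uses the lemma. In the canonical decomposition theorem it passes from $\operatorname{Re}(\beta^i Y)\le 0$ for all $\beta\in\mathbb T$ to $Y=0$, which is the homogeneous form of the statement. Your argument gives this at once: apply it to $tY$ for every $t>0$ to get $w(Y)=0$, and hence $Y=0$ by polarization.
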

We also recall the following result from \cite[Proposition~1.3.2]{Bhatia}.

\begin{lem}\label{bhatia}
Let
$
M=
\begin{bmatrix}
A & X\\
X^* & B
\end{bmatrix},$
where $A\ge0$ and $B\ge0$. Then $M\ge0$ if and only if
\[
X=A^{1/2}KB^{1/2}
\]
for some contraction $K$.
\end{lem}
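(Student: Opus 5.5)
The statement is Lemma~\ref{bhatia}: a block operator matrix with positive diagonal entries $A$ and $B$ is positive exactly when its off-diagonal entry factors as $X=A^{1/2}KB^{1/2}$ with $\|K\|\le1$. I will prove the two directions separately.

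\emph{Sufficiency.} Suppose $X=A^{1/2}KB^{1/2}$ with $\|K\|\le1$. Then
\[
M=\begin{bmatrix} A^{1/2}&0\\0&B^{1/2}\end{bmatrix}
\begin{bmatrix} I&K\\K^*&I\end{bmatrix}
\begin{bmatrix} A^{1/2}&0\\0&B^{1/2}\end{bmatrix}.
\]
So it suffices to check that $\begin{bmatrix} I&K\\K^*&I\end{bmatrix}\ge0$. For vectors $x,y$,
\[
\|x\|^2+\|y\|^2+2\operatorname{Re}\langle Ky,x\rangle\ \ge\ \|x\|^2+\|y\|^2-2\|x\|\|y\|\ \ge\ 0 .
\]
Conjugation by a (selfadjoint) diagonal operator preserves positivity, so $M\ge0$.

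\emph{Necessity.} Assume $M\ge0$. I plan to construct $K$ as a sesquilinear form on the ranges of the square roots.

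\textbf{Step 1: the key inequality.} Expanding $\langle M(x\oplus ty),x\oplus ty\rangle\ge0$ over real $t$ and unimodular phases gives
\[
|\langle Xy,x\rangle|^2\le\langle Ax,x\rangle\langle By,y\rangle=\|A^{1/2}x\|^2\,\|B^{1/2}y\|^2 .
\]

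\textbf{Step 2: defining $K$.} Define a form on $\operatorname{Ran}A^{1/2}\times\operatorname{Ran}B^{1/2}$ by
\[
\beta\bigl(A^{1/2}x,\,B^{1/2}y\bigr)=\langle Xy,x\rangle .
\]
This is well defined: if $A^{1/2}x=A^{1/2}x'$, then $|\langle Xy,x-x'\rangle|\le\|A^{1/2}(x-x')\|\,\|B^{1/2}y\|=0$, and the same argument works in the second slot. By Step~1 the form is bounded by $1$. It therefore extends to the closures, and we set it to zero on the orthocomplements of those closures. By the Riesz representation theorem it is then given by a contraction $K$ with $\langle KB^{1/2}y,A^{1/2}x\rangle=\langle Xy,x\rangle$.

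\textbf{Step 3: conclusion.} The identity in Step~2 says exactly that $X=A^{1/2}KB^{1/2}$.

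The main obstacle is the well-definedness and extension in Step~2 when $A$ and $B$ are not invertible. An alternative route would be to first treat $A+\epsilon I$ and $B+\epsilon I$, where Schur complements apply directly, and then pass to a weak-operator limit of the resulting contractions. The form approach avoids that limiting argument.
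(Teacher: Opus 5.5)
Your proof is correct, and it follows a different route from the source the paper relies on. The paper gives no argument of its own; it quotes Bhatia's Proposition~1.3.2, a matrix statement. The standard proof there first assumes $A,B$ positive definite and conjugates $M$ by $\operatorname{diag}(A^{-1/2},B^{-1/2})$, which shows that $M\ge0$ exactly when $A^{-1/2}XB^{-1/2}$ is a contraction. It then reaches the semidefinite case by replacing $A,B$ with $A+\epsilon I,\,B+\epsilon I$ and taking a limit of the resulting contractions $K_\epsilon$ by compactness. This is essentially the alternative you mention at the end.

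That route is shorter in the invertible case, but its limiting step relies on compactness of the matrix unit ball. For the infinite-dimensional block operators to which the paper applies the lemma, one would need weak-operator compactness, together with a check that $\langle K_\epsilon(B+\epsilon I)^{1/2}y,(A+\epsilon I)^{1/2}x\rangle$ converges along a subnet.

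Your argument avoids this. It combines the Cauchy--Schwarz-type inequality of Step~1 with a bounded sesquilinear form on $\operatorname{Ran}A^{1/2}\times\operatorname{Ran}B^{1/2}$, extended to the closures and by zero on their complements, followed by Riesz representation. It works verbatim on arbitrary Hilbert spaces and treats non-invertible $A,B$ without approximation. It also singles out a canonical $K$: the unique solution with $K=PKQ$, where $P,Q$ project onto $\overline{\operatorname{Ran}}\,A$ and $\overline{\operatorname{Ran}}\,B$.

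Note also that the paper uses the lemma only once, in the proof of the canonical decomposition theorem, to deduce $X_i=0$ from $A_i=0$. Your Step~1 inequality $|\langle Xy,x\rangle|^2\le\langle Ax,x\rangle\langle By,y\rangle$ gives that immediately.
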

Let $\mathbf T=(T_1,\ldots,T_n)$ be a $\mathbf{\Theta}_n$-contraction on a Hilbert space $\mathcal H$. Since $T_n$ is a contraction, it admits the canonical decomposition described above. The following theorem shows that this decomposition simultaneously reduces every operator $T_i$, $1\le i\le n-1$. Consequently, the canonical decomposition of a $\mathbf{\Theta}_n$-contraction is completely determined by the canonical decomposition of its last component $T_n$.

\begin{thm}\label{Decomposition of T}
Let $\mathbf{T}=(T_1,\ldots,T_n)$ be a $\mathbf{\Theta}_n$-contraction on a Hilbert space $\mathcal H$. Let $\mathcal H_1$ be the maximal reducing subspace of $T_n$ on which $T_n$ is unitary, and set
$
\mathcal H_2=\mathcal H\ominus\mathcal H_1.$
Then the following assertions hold.
\begin{enumerate}
\item The subspaces $\mathcal H_1$ and $\mathcal H_2$ reduce each of the operators $T_1,\ldots,T_{n-1}$.

\item
$
(T_1|_{\mathcal H_1},\ldots,T_{n-1}|_{\mathcal H_1},T_n|_{\mathcal H_1})$
is a $\mathbf{\Theta}_n$-unitary.

\item
$
(T_1|_{\mathcal H_2},\ldots,T_{n-1}|_{\mathcal H_2},T_n|_{\mathcal H_2})$
is a c.n.u. $\mathbf{\Theta}_n$-contraction.

\item Either $\mathcal H_1$ or $\mathcal H_2$ may be the trivial subspace $\{0\}$.
\end{enumerate}
\end{thm}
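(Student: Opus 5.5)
The plan is to follow the pattern of the $\Gamma_n$ and tetrablock decompositions, with the tetrablock triples of Lemma~\ref{Lem 3} as the main tool. Write the canonical decomposition $T_n=\begin{bmatrix}U&0\\0&C\end{bmatrix}$ on $\mathcal H_1\oplus\mathcal H_2$, where $U$ is unitary and $C$ is c.n.u. A key preliminary observation is that $\mathcal H_1$ is also the maximal reducing subspace on which $T_n^p$ is unitary. Indeed, $\mathcal H_1$ reduces $T_n^p$, and $T_n^p|_{\mathcal H_1}=U^p$ is unitary. Conversely, if $T_n^p$ is unitary on a reducing subspace $\mathcal M$ of $T_n^p$, then the norm chain argument of Theorem~\ref{Thm 5} shows that $T_n$ is isometric on $\mathcal M$. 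A symmetric argument with $T_n^{*p}$ shows that $T_n^*$ is isometric there too. Taking the closed span of $T_n^{j}T_n^{*k}\mathcal M$, or equivalently using the characterization $\mathcal H_1=\{x:\|T_n^kx\|=\|x\|=\|T_n^{*k}x\|\ \forall k\}$, one concludes $\mathcal M\subseteq\mathcal H_1$. Concretely, I will use this characterization together with the fact that $\|T_n^{pk}x\|=\|x\|$ forces $\|T_n^jx\|=\|x\|$ for all $j\le pk$.

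For (1), fix $i$ and write $T_i=\begin{bmatrix}A_{11}&A_{12}\\A_{21}&A_{22}\end{bmatrix}$ and $T_{n-i}=\begin{bmatrix}B_{11}&B_{12}\\B_{21}&B_{22}\end{bmatrix}$. By Lemma~\ref{Lem 3}, $(T_i/k(i),T_{n-i}/k(i),T_n^p)$ is an $\mathbb E$-contraction, and $\mathcal H_1$ is the unitary part of its last entry, with $T_n^p=U^p\oplus C^p$ and $C^p$ c.n.u. The decomposition theorem for tetrablock contractions (Bhattacharyya) then gives that $\mathcal H_1,\mathcal H_2$ reduce $T_i$ and $T_{n-i}$. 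This is the cheapest route. If I want to avoid that citation, I will argue directly. Commutativity $T_iT_n=T_nT_i$ yields $A_{12}C=UA_{12}$ and $A_{21}U=CA_{21}$. To kill $A_{21}$ and $A_{12}$, I will use Proposition~\ref{Prop 1}: taking $\alpha\in\mathbb T$, the $(1,1)$ block of $\Phi^{(i)}_1\ge0$ has vanishing $k(i)^2(I-U^{*p}U^p)$ term. Combining this with the analogous $\Phi^{(i)}_2$ block and the averaging over $\alpha$ used in Theorem~\ref{Thm 1}, $(6)\Rightarrow(3)$, should force the relevant diagonal entries of the block matrix to vanish. Lemma~\ref{bhatia} then kills the off-diagonal entries, and Lemma~\ref{Numerical Radius} helps convert the $\operatorname{Re}(\cdot)$ inequalities into numerical radius bounds. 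Next, the intertwinings $A_{12}C=UA_{12}$ and $A_{21}U=CA_{21}$, together with relations of the form $A_{12}=B_{21}^*C^p$ obtained from the off-diagonal blocks, give $A_{12}C^{p}C^{*p}=\dots$. The standard argument then shows that the range of $A_{21}$ is a subspace on which $C$ behaves unitarily, contradicting the c.n.u. property, so $A_{21}=0$ and likewise $A_{12}=0$. This step is the main obstacle, and I expect to rely on the tetrablock decomposition theorem to bypass it.

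For (2) and (3): restrictions of a $\mathbf\Theta_n$-contraction to a joint reducing subspace remain $\mathbf\Theta_n$-contractions, since $p(\mathbf T|_{\mathcal H_j})=p(\mathbf T)|_{\mathcal H_j}$, combined with Theorem~\ref{Theta_n Contraction Char}. On $\mathcal H_1$ the last entry $U$ is unitary, so Theorem~\ref{Thm 1}, $(4)\Rightarrow(1)$, gives a $\mathbf\Theta_n$-unitary. On $\mathcal H_2$ the last entry $C$ is c.n.u., which matches Definition~\ref{c.n.u.}. Finally, (4) follows from examples: a $\mathbf\Theta_n$-unitary has $\mathcal H_2=\{0\}$, and for the zero tuple on $H^2$, or $(0,\dots,0,M_z)$ with $M_z$ a shift, $\mathcal H_1=\{0\}$. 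I would check via Theorem~\ref{Thm 5}(2) that the latter is a $\mathbf\Theta_n$-contraction, which it is, since $V_i=0=V_{n-i}^*V_n^p$.
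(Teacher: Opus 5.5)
Your proposal is correct, but for assertion (1) it takes a different route from the paper.

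The paper never invokes a tetrablock decomposition theorem. It adds the inequalities $\Phi^{(i)}_1\ge0$ and $\Phi^{(i)}_2\ge0$ of Proposition~\ref{Prop 1} for unimodular $\alpha,\beta$ and notes that the $(1,1)$ block of $I-T_n^{*p}T_n^p$ vanishes. It then uses Lemma~\ref{Numerical Radius} to obtain $T^{(i)}_{11}=T^{(n-i)*}_{11}U^p$ (together with the relation with $i$ and $n-i$ interchanged). Next it applies Lemma~\ref{bhatia} to kill the off-diagonal block of the resulting positive operator, which yields relations such as $T^{(i)}_{12}=T^{(n-i)*}_{21}C^p$. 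Combined with the intertwinings from $T_iT_n=T_nT_i$, these show that $C^{pk}C^{*pk}$ and $C^{*pk}C^{pk}$ fix $\operatorname{ran}T^{(i)*}_{12}$. The c.n.u.\ property of $C^p$ then forces $T^{(i)}_{12}=0$, and similarly $T^{(i)}_{21}=0$. In effect, the paper re-proves the tetrablock canonical decomposition in this special setting.

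You instead apply the known canonical decomposition of tetrablock contractions to the triples $(T_i/k(i),T_{n-i}/k(i),T_n^p)$ of Lemma~\ref{Lem 3}. The only extra input is your observation that the unitary part of $T_n^p$ coincides with that of $T_n$. You prove this correctly from $\mathcal H_1=\{x:\|T_n^kx\|=\|x\|=\|T_n^{*k}x\|\ \text{for all } k\}$ and the monotonicity of $j\mapsto\|T_n^jx\|$.

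Your route is shorter and modular. It also makes explicit a fact the paper uses without comment when it asserts that $T^{(n)p}_2$ is c.n.u. The paper's route is self-contained, needing only Proposition~\ref{Prop 1} and two elementary matrix lemmas. Your treatment of (2) and (3) coincides with the paper's. Your sketched ``direct'' fallback is unnecessary, and as written it is too vague to stand on its own.

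One citation point: the tetrablock canonical decomposition is not among the Bhattacharyya results used elsewhere in this paper, which concern $\mathbb E$-unitaries and $\mathbb E$-isometries. Cite a source that actually contains it, such as S.~Pal's paper on the canonical decomposition of tetrablock contractions.
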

\begin{proof}
Suppose that $\mathbf{T}=(T_1,\ldots,T_n)$ is a $\mathbf{\Theta}_n$-contraction. If $T_n$ is a c.n.u. contraction, then $\mathcal H_1=\{0\}$. On the other hand, if $T_n$ is unitary, then $\mathcal H_1=\mathcal H$, and consequently $\mathcal H_2=\{0\}$. In either case, the conclusion follows immediately. Therefore, it suffices to consider the case where $T_n$ is neither unitary nor completely non-unitary.
With respect to the orthogonal decomposition
$
\mathcal H=\mathcal H_1\oplus\mathcal H_2,$
write
\begin{equation}\label{T_i, T_n}
\begin{aligned}
T_i&=
\begin{bmatrix}
T^{(i)}_{11} & T^{(i)}_{12}\\
T^{(i)}_{21} & T^{(i)}_{22}
\end{bmatrix},
\quad 1\le i\le n-1,\quad
T_n&=
\begin{bmatrix}
T^{(n)}_1 & 0\\
0 & T^{(n)}_2
\end{bmatrix},
\end{aligned}
\end{equation}
where
$
T^{(n)}_1=T_n|_{\mathcal H_1}$
is unitary and
$
T^{(n)}_2=T_n|_{\mathcal H_2}$
is a c.n.u. contraction.
Since $T^{(n)}_2$ is completely non-unitary, if $x\in\mathcal H_2$ satisfies
\[
\|(T^{(n)}_2)^{*k}x\|
=
\|x\|
=
\|(T^{(n)}_2)^kx\|,
\qquad k\in\mathbb N,
\]
then necessarily $x=0$.
Since $T_iT_n=T_nT_i$ for $1\le i\le n-1$, we obtain
\begin{equation}\label{De 1}
\begin{aligned}
T^{(i)}_{11}T^{(n)}_1
&=
T^{(n)}_1T^{(i)}_{11},
&
T^{(i)}_{12}T^{(n)}_2
&=
T^{(n)}_1T^{(i)}_{12},
\\
T^{(i)}_{21}T^{(n)}_1
&=
T^{(n)}_2T^{(i)}_{21},
&
T^{(i)}_{22}T^{(n)}_2
&=
T^{(n)}_2T^{(i)}_{22}.
\end{aligned}
\end{equation}
Since $\mathbf{T}=(T_1,\ldots,T_n)$ is a $\mathbf{\Theta}_n$-contraction, Theorem~\ref{Prop 1} implies that
\[
\Phi^{(i)}_1(\alpha^iT_i,\alpha^{n-i}T_{n-i},\alpha^nT_n^p)\ge0
\quad\text{and}\quad
\Phi^{(i)}_2(\beta^iT_i,\beta^{n-i}T_{n-i},\beta^nT_n^p)\ge0
\]
for all $\alpha,\beta\in\mathbb T$ and $1\le i\le n-1$. Adding these two inequalities, we obtain
\begin{equation}\label{De 2}
\begin{aligned}
k(i)(I-T_n^{*p}T_n^p)
-\operatorname{Re}\!\left(\alpha^{n-i}(T_{n-i}-T_i^*T_n^p)\right)
-\operatorname{Re}\!\left(\beta^i(T_i-T_{n-i}^*T_n^p)\right)
\ge0.
\end{aligned}
\end{equation}
For $1\le i\le n-1$, define
\[
N_i:=
k(i)(I-T_n^{*p}T_n^p)
-\operatorname{Re}\!\left(\alpha^{n-i}(T_{n-i}-T_i^*T_n^p)\right)
-\operatorname{Re}\!\left(\beta^i(T_i-T_{n-i}^*T_n^p)\right).
\]
Using the block matrix representations in \eqref{T_i, T_n}, equation \eqref{De 2} can be written as
\begin{equation}\label{De 3}
\begin{aligned}
0\le N_i
&=
\begin{bmatrix}
0 & 0\\
0 & I-T_2^{(n)*p}T_2^{(n)p}
\end{bmatrix}
-
\operatorname{Re}\alpha^{n-i}
\begin{bmatrix}
T_{11}^{(n-i)}-{T_{11}^{(i)}}^*T_1^{(n)p}
&
T_{12}^{(n-i)}-{T_{21}^{(i)}}^*T_2^{(n)p}
\\
T_{21}^{(n-i)}-{T_{12}^{(i)}}^*T_1^{(n)p}
&
T_{22}^{(n-i)}-{T_{22}^{(i)}}^*T_2^{(n)p}
\end{bmatrix}
\\
&\hspace{4.8cm}
-
\operatorname{Re}\beta^{i}
\begin{bmatrix}
T_{11}^{(i)}-T_{11}^{(n-i)*}T_1^{(n)p}
&
T_{12}^{(i)}-T_{21}^{(n-i)*}T_2^{(n)p}
\\
T_{21}^{(i)}-T_{12}^{(n-i)*}T_1^{(n)p}
&
T_{22}^{(i)}-T_{22}^{(n-i)*}T_2^{(n)p}
\end{bmatrix}.
\end{aligned}
\end{equation}
Since $N_i$ is positive semidefinite, it admits the block matrix representation
\[
N_i=
\begin{bmatrix}
A_i & X_i\\
X_i^* & B_i
\end{bmatrix},
\quad
1\le i\le n-1.
\]
Comparing the $(1,1)$ entries in \eqref{De 3}, we obtain
\begin{equation}\label{De 4}
\begin{aligned}
\operatorname{Re}\!\left(\alpha^{n-i}
(T^{(n-i)}_{11}-{T^{(i)}_{11}}^*T^{(n)p}_1)\right)
+\operatorname{Re}\!\left(\beta^i
(T^{(i)}_{11}-T^{(n-i)*}_{11}T^{(n)p}_1)\right)
\le0,
\end{aligned}
\end{equation}
for all $\alpha,\beta\in\mathbb T$ and $1\le i\le n-1$.
Setting $\alpha^{\,n-i}=1$ and $\alpha^{\,n-i}=-1$ in \eqref{De 4}, respectively, we obtain
\begin{equation}\label{De 5}
\begin{aligned}
\operatorname{Re}\!\left(
T^{(n-i)}_{11}-{T^{(i)}_{11}}^*T^{(n)p}_1\right)
+\operatorname{Re}\!\left(\beta^i
(T^{(i)}_{11}-T^{(n-i)*}_{11}T^{(n)p}_1)\right)
\le0,
\end{aligned}
\end{equation}
and
\begin{equation}\label{De 6}
\begin{aligned}
-\operatorname{Re}\!\left(
T^{(n-i)}_{11}-{T^{(i)}_{11}}^*T^{(n)p}_1\right)
+\operatorname{Re}\!\left(\beta^i
(T^{(i)}_{11}-T^{(n-i)*}_{11}T^{(n)p}_1)\right)
\le0,
\end{aligned}
\end{equation}
for all $\beta\in\mathbb T$ and $1\le i\le n-1$.
Adding \eqref{De 5} and \eqref{De 6} yields
\[
\operatorname{Re}\!\left(\beta^i
(T^{(i)}_{11}-T^{(n-i)*}_{11}T^{(n)p}_1)\right)
\le0,
\quad
1\le i\le n-1,
\]
for all $\beta\in\mathbb T$. It now follows from Lemma~\ref{Numerical Radius} that
\[
T^{(i)}_{11}
=
T^{(n-i)*}_{11}T^{(n)p}_1,
\quad
1\le i\le n-1.
\]
Similarly, by setting $\beta^i=1$ and $\beta^i=-1$ in \eqref{De 4}, we obtain
\[
T^{(n-i)}_{11}
=
T^{(i)*}_{11}T^{(n)p}_1,
\quad
1\le i\le n-1.
\]
Since
$
T^{(i)}_{11}=T^{(n-i)*}_{11}T^{(n)p}_1,
1\le i\le n-1,$
it follows that $A_i=0$. Consequently, Lemma~\ref{bhatia} implies that
$X_i=0$. Therefore, \eqref{De 3} yields
\begin{equation}\label{De 7}
\begin{aligned}
\operatorname{Re}\!\left(\alpha^{n-i}
(T^{(n-i)}_{12}-T^{(i)*}_{21}T^{(n)p}_2)\right)
+\operatorname{Re}\!\left(\beta^i
(T^{(i)}_{12}-T^{(n-i)*}_{21}T^{(n)p}_2)\right)
=0
\end{aligned}
\end{equation}
and
\begin{equation}\label{De 8}
\begin{aligned}
\operatorname{Re}\!\left(\alpha^{n-i}
(T^{(n-i)}_{21}-T^{(i)*}_{12}T^{(n)p}_1)\right)
+\operatorname{Re}\!\left(\beta^i
(T^{(i)}_{21}-T^{(n-i)*}_{12}T^{(n)p}_1)\right)
=0,
\end{aligned}
\end{equation}
for all $\alpha,\beta\in\mathbb{T}$ and $1\le i\le n-1$. Proceeding exactly as in the proof of \eqref{De 5}--\eqref{De 6}, equations \eqref{De 7} and \eqref{De 8} yield
\begin{equation}\label{De 9}
\begin{aligned}
T^{(n-i)}_{12} = T^{(i)*}_{21}T^{(n)p}_2,
T^{(i)}_{12} = T^{(n-i)*}_{21}T^{(n)p}_2,
T^{(n-i)}_{21} = T^{(i)*}_{12}T^{(n)p}_1,
T^{(i)}_{21} = T^{(n-i)*}_{12}T^{(n)p}_1, 1\le i\le n-1.
\end{aligned}
\end{equation}
Combining \eqref{De 1} and \eqref{De 9}, we obtain
\begin{equation}\label{De 10}
\begin{aligned}
T^{(n-i)*}_{12}(T^{(n)p}_1)^2
&= T^{(i)}_{21}T^{(n)p}_1\\
&= T^{(n)p}_2T^{(i)}_{21}\\
&= T^{(n)p}_2T^{(n-i)*}_{12}T^{(n)p}_1.
\end{aligned}
\end{equation}
Since $T^{(n)}_1$ is unitary, multiplying both sides of \eqref{De 10} on the right by $(T^{(n)p}_1)^*$ yields
\begin{equation}\label{De 11}
T^{(n-i)*}_{12}T^{(n)p}_1
=
T^{(n)p}_2T^{(n-i)*}_{12},
\quad
1\le i\le n-1.
\end{equation}
Repeated applications of \eqref{De 1} and \eqref{De 11} yield, for every $k \ge 1$,
\begin{equation}\label{De 12}
(T^{(n)p}_1)^kT^{(i)}_{12}
=
T^{(i)}_{12}(T^{(n)p}_2)^k
\quad\text{and}\quad
(T^{(n)*p}_1)^kT^{(n-i)}_{12}
=
T^{(n-i)}_{12}(T^{(n)*p}_2)^k,
\quad
1\le i\le n-1.
\end{equation}
Combining \eqref{De 12} with the unitarity of $T^{(n)}_1$, we obtain
\begin{equation}\label{De 13}
\begin{aligned}
T^{(i)}_{12}(T^{(n)p}_2)^k(T^{(n)*p}_2)^k
&=
(T^{(n)p}_1)^kT^{(i)}_{12}(T^{(n)*p}_2)^k\\
&=
(T^{(n)p}_1)^k(T^{(n)*p}_1)^kT^{(i)}_{12}\\
&=
T^{(i)}_{12}.
\end{aligned}
\end{equation}
An analogous argument shows that
\begin{equation}\label{De 14}
T^{(i)}_{12}(T^{(n)*p}_2)^k(T^{(n)p}_2)^k
=
T^{(i)}_{12},
\quad
1\le i\le n-1.
\end{equation}
For every $x\in\mathcal H_1$ and $k\ge1$, it follows from \eqref{De 13} and \eqref{De 14} that
\[
\|(T^{(n)*p}_2)^kT^{(i)*}_{12}x\|
=
\|T^{(i)*}_{12}x\|
=
\|(T^{(n)p}_2)^k(T^{(n)*p}_2)^kT^{(i)*}_{12}x\|,
\quad
1\le i\le n-1.
\]
Since $T^{(n)p}_2$ is completely non-unitary, it follows that
$T^{(i)*}_{12}x=0$ for every $x\in\mathcal H_1$. Hence
$T^{(i)}_{12}=0$ for $1\le i\le n-1$. Similarly, one can show that
$T^{(i)}_{21}=0$ for $1\le i\le n-1$. Therefore,
\[
T_i=
\begin{bmatrix}
T^{(i)}_{11} & 0\\
0 & T^{(i)}_{22}
\end{bmatrix},
\qquad
1\le i\le n-1,
\]
with respect to the decomposition
$\mathcal H=\mathcal H_1\oplus\mathcal H_2$.
Consequently, both $\mathcal H_1$ and $\mathcal H_2$ reduce each of the operators
$T_1,\ldots,T_{n-1}$, proving \emph{(1)}.

Since $\mathbf T$ is a $\mathbf{\Theta}_n$-contraction and
$\mathcal H_1$ is a joint invariant subspace for
$T_1,\ldots,T_n$, the restriction
$(T^{(1)}_{11},\ldots,T^{(n-1)}_{11},T^{(n)}_1)$
is also a $\mathbf{\Theta}_n$-contraction. As $T^{(n)}_1$ is unitary,
Theorem~\ref{Thm 1} implies that
$(T^{(1)}_{11},\ldots,T^{(n-1)}_{11},T^{(n)}_1)$
is a $\mathbf{\Theta}_n$-unitary, proving \emph{(2)}.

Similarly,
$(T^{(1)}_{22},\ldots,T^{(n-1)}_{22},T^{(n)}_2)$
is a $\mathbf{\Theta}_n$-contraction on $\mathcal H_2$.
Since $T^{(n)}_2$ is completely non-unitary, it follows that
$
(T_1|_{\mathcal H_2},\ldots,
T_{n-1}|_{\mathcal H_2},
T_n|_{\mathcal H_2})$
is a completely non-unitary
$\mathbf{\Theta}_n$-contraction, proving \emph{(3)}.
This completes the proof.
\end{proof}	
	
\begin{rem}\label{Rem 1}
Observe that a \emph{pure isometry} (or \emph{shift}) is a special case of a completely non-unitary contraction with norm one. Therefore, the Wold decomposition for $\mathbf{\Theta}_n$-isometries (Theorem~\ref{Thm 5}) is a special case of the orthogonal decomposition of $\mathbf{\Theta}_n$-contractions established in Theorem~\ref{Decomposition of  T}.
\end{rem}	
\section{Dilation Theory of $\mathbf{\Theta}_n$-Contractions}\label{Dilation Theory}

In this section, we study the dilation theory of $\mathbf{\Theta}_n$-contractions. We begin by introducing the notion of a $\mathbf{\Theta}_n$-isometric dilation.

\begin{defn}\label{Dilation}
Let $\mathbf{T}=(T_1,\dots,T_n)$ be a $\mathbf{\Theta}_n$-contraction on a Hilbert space $\mathcal{H}$. An $n$-tuple of commuting bounded operators $\mathbf{V}=(V_1,\dots,V_n)$ acting on a Hilbert space $\mathcal{K}$ containing $\mathcal{H}$ as a subspace is called a \emph{$\mathbf{\Theta}_n$-isometric dilation} of $\mathbf{T}$ if the following conditions hold:
\begin{enumerate}
\item $\mathbf{V}$ is a $\mathbf{\Theta}_n$-isometry.

\item $V_i^*|_{\mathcal H}=T_i^*$, \quad $1\le i\le n$.
\end{enumerate}
Furthermore, $\mathbf{V}$ is called a \emph{minimal $\mathbf{\Theta}_n$-isometric dilation} of $\mathbf{T}$ if
\[
\mathcal K
=
\overline{\operatorname{span}}
\{V_n^kh:\, h\in\mathcal H,\; k\in\mathbb N\cup\{0\}\}.
\]
In this case, $\mathcal K$ is called the \emph{minimal $\mathbf{\Theta}_n$-isometric dilation space} of $\mathbf{T}$.
\end{defn}	
Since every $\mathbf{\Theta}_n$-isometry is the restriction of a $\mathbf{\Theta}_n$-unitary to a joint invariant subspace, Definition~\ref{Dilation} immediately implies that every $\mathbf{\Theta}_n$-contraction admitting a $\mathbf{\Theta}_n$-isometric dilation also admits a $\mathbf{\Theta}_n$-unitary dilation. 

\subsection{Necessary and Sufficient Conditions for the Existence of $\mathbf{\Theta}_n$-Isometric Dilations}
In this subsection, we first explicitly construct a minimal $\mathbf{\Theta}_n$-isometric dilation of a $\mathbf{\Theta}_n$-contraction under suitable assumptions. 
\begin{thm}\label{Conditional Dilation}
Let $\mathbf{T}=(T_1,\dots,T_n)$ be a $\mathbf{\Theta}_n$-contraction on a Hilbert space $\mathcal H$, and let
$A_k^{(i)}$, $0\le k\le p$, $1\le i\le n-1$, be bounded operators on $\mathcal D_{T_n}$. Let
\[
\mathcal K
=
\mathcal H
\oplus
\mathcal D_{T_n}
\oplus
\mathcal D_{T_n}
\oplus\cdots
=
\mathcal H\oplus\ell^2(\mathcal D_{T_n}).
\]
Define an $n$-tuple of operators
$\mathbf V=(V_1,\dots,V_n)$
on $\mathcal K$ by	
\begin{equation}\label{V_i}
			\begin{aligned}
				V_i &=
				\left[
				\begin{smallmatrix}
					T_i &\hspace{0.5cm} 0 &\hspace{0.5cm} 0 &\hspace{0.5cm} 0 &\hspace{0.5cm} \dots \hspace{0.5cm}\\
					\displaystyle\sum_{l=0}^{p-1} A^{(i)}_{p-l}D_{T_n}T^{p-1-l}_n &\hspace{0.5cm} A^{(i)}_0 &\hspace{0.5cm} 0 &\hspace{0.5cm} 0 &\hspace{0.5cm} \dots \hspace{0.5cm}\\
					\displaystyle\sum_{l=0}^{p-2} A^{(i)}_{p-l}D_{T_n}T^{p-2-l}_n &\hspace{0.5cm} A^{(i)}_1 &\hspace{0.5cm} A^{(i)}_0 &\hspace{0.5cm} 0 &\hspace{0.5cm} \dots \hspace{0.5cm}\\
					\vdots & \hspace{0.5cm} \vdots & \hspace{0.5cm} \vdots & \hspace{0.5cm} \vdots & \hspace{0.5cm} \ddots \hspace{0.5cm}\\
					\displaystyle\sum_{l=0}^{1} A^{(i)}_{p-l}D_{T_n}T^{1-l}_n &\hspace{0.5cm} A^{(i)}_{p-2} &\hspace{0.5cm} A^{(i)}_{p-3} &\hspace{0.5cm} A^{(i)}_{p-4} &\hspace{0.5cm} \dots \hspace{0.5cm}\\
					A^{(i)}_pD_{T_n} &\hspace{0.5cm} A^{(i)}_{p-1} &\hspace{0.5cm} A^{(i)}_{p-2} &\hspace{0.5cm} A^{(i)}_{p-3} &\hspace{0.5cm} \dots \hspace{0.5cm}\\
					0 &\hspace{0.5cm} A^{(i)}_p &\hspace{0.5cm} A^{(i)}_{p-1} &\hspace{0.5cm} A^{(i)}_{p-2} &\hspace{0.5cm} \dots \hspace{0.5cm}\\
					\vdots &\hspace{0.5cm} \vdots &\hspace{0.5cm} \vdots &\hspace{0.5cm} \vdots &\hspace{0.5cm} \ddots \hspace{0.5cm}
				\end{smallmatrix}\right]
			\end{aligned}
		\end{equation}
		for $1 \le i \le n-1$ and
		\begin{equation}\label{V_n}
			\begin{aligned}
				V_n &=
				\left[
				\begin{smallmatrix}
					T_n &\hspace{0.5cm} 0 &\hspace{0.5cm} 0 &\hspace{0.5cm} 0 &\hspace{0.5cm} \dots \hspace{0.5cm}\\
					D_{T_n} &\hspace{0.5cm} 0 &\hspace{0.5cm} 0 &\hspace{0.5cm} 0 &\hspace{0.5cm} \dots \hspace{0.5cm}\\
					0 &\hspace{0.5cm} I &\hspace{0.5cm} 0 &\hspace{0.5cm} 0 &\hspace{0.5cm} \dots \hspace{0.5cm}\\
					0 &\hspace{0.5cm} 0 &\hspace{0.5cm} I &\hspace{0.5cm} 0 &\hspace{0.5cm} \dots \hspace{0.5cm}\\
					0 &\hspace{0.5cm} 0 &\hspace{0.5cm} 0 &\hspace{0.5cm} I &\hspace{0.5cm} \dots \hspace{0.5cm}\\
					\vdots &\hspace{0.5cm} \vdots &\hspace{0.5cm} \vdots &\hspace{0.5cm} \vdots &\hspace{0.5cm} \ddots \hspace{0.5cm}
				\end{smallmatrix}\right],
			\end{aligned}
		\end{equation}
		where $I$ denotes the identity operator on $\mathcal{D}_{T_n}$. Then $\mathbf{V}=(V_1,\dots,V_n)$ is a $\mathbf{\Theta}_n$-isometric dilation of $\mathbf{T}$ provided that $(\gamma_1V_1,\dots,\gamma_{n-1}V_{n-1})$ is a $\Gamma_{n-1}$-contraction and the following conditions are satisfied:
\begin{enumerate}
\item[(1)] $A^{(i)*}_l=A^{(n-i)}_{p-l}$, $1\le i\le n-1$, $0\le l\le p$, and $A^{(i)}_l=0$ for $l>p$.

\item[(2)] $\displaystyle\sum_{l=0}^{k}[A^{(i)}_l,A^{(j)}_{k-l}]=0$, $0\le k\le 2p$ and $1\le i,j\le n-1$, where $[A,B]=AB-BA$.

\item[(3)] $D_{T_n}T_i=\displaystyle\sum_{k=0}^{p}A^{(i)}_kD_{T_n}T_n^k$, $1\le i\le n-1$.

\item[(4)] \begin{equation}\begin{aligned}T_i-T_{n-i}^*T_n^p
&=\sum_{l=0}^{p-1}T_n^{*\,p-1-l}D_{T_n}A^{(i)}_lD_{T_n}T_n^{p-1}
+\sum_{l=0}^{p-2}T_n^{*\,p-2-l}D_{T_n}A^{(i)}_lD_{T_n}T_n^{p-2}
+\cdots \\\nonumber
&+\sum_{l=0}^{1}T_n^{*\,1-l}D_{T_n}A^{(i)}_lD_{T_n}T_n
+D_{T_n}A^{(i)}_0D_{T_n}.
\end{aligned}
\end{equation}
\end{enumerate}

Moreover, if $\mathbf{V}=(V_1,\dots,V_n)$ is a $\mathbf{\Theta}_n$-isometric dilation of $\mathbf{T}$ on $\mathcal{K}$, then the operators $V_1,\dots,V_n$ are of the form given in \eqref{V_i} and \eqref{V_n}, and satisfy conditions \emph{(1)}--\emph{(4)}.
	\end{thm}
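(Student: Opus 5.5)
The plan for sufficiency is to verify the three conditions of Theorem~\ref{Thm 5}(2) for $\mathbf V$: $V_n$ is an isometry, $(\gamma_1V_1,\dots,\gamma_{n-1}V_{n-1})$ is a $\Gamma_{n-1}$-contraction (this is assumed), and $V_i=V_{n-i}^*V_n^p$ for $1\le i\le n-1$. Separately we check that the $V_i$ commute and that $V_i^*|_{\mathcal H}=T_i^*$.

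\textbf{The routine parts.} $V_n$ is the Schäffer isometric dilation of $T_n$. Indeed $T_n^*T_n+D_{T_n}^2=I$, and the shift part is isometric. Each $V_i$ is lower triangular with $(1,1)$ entry $T_i$, so $\mathcal H^\perp=\ell^2(\mathcal D_{T_n})$ is invariant under every $V_i$. Hence $\mathcal H$ is co-invariant and $V_i^*|_{\mathcal H}=T_i^*$.

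\textbf{Commutativity.} We compute block by block. On $\ell^2(\mathcal D_{T_n})$, $V_i$ acts as a Toeplitz operator with analytic polynomial symbol $\Phi_i(z)=\sum_{k=0}^pA_k^{(i)}z^k$. So $[V_i,V_j]$ restricted there vanishes exactly when $\Phi_i\Phi_j=\Phi_j\Phi_i$, which is condition (2). The same Toeplitz structure gives commutation with the shift part of $V_n$. The first-column entries produce the remaining requirements:
\begin{itemize}
\item $V_iV_n=V_nV_i$ on $\mathcal H$ reduces, entry by entry, to condition (3), $D_{T_n}T_i=\sum_kA_k^{(i)}D_{T_n}T_n^k$, once we use $T_iT_n=T_nT_i$.
\item $V_iV_j=V_jV_i$ on $\mathcal H$ follows from (3) and (2) together with $T_iT_j=T_jT_i$.
\end{itemize}

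\textbf{The main obstacle: $V_i=V_{n-i}^*V_n^p$.} We write $V_n^p$ explicitly. Its first column is $(T_n^p, D_{T_n}T_n^{p-1},\dots,D_{T_n},0,\dots)^T$, and it acts as the shift by $p$ on the tail. Then we compare $V_{n-i}^*V_n^p$ with $V_i$ block by block.
\begin{itemize}
\item On the tail, $V_{n-i}^*V_n^p$ is the Toeplitz operator with symbol $\Phi_{n-i}(z)^*z^p=\sum_l A_l^{(n-i)*}z^{p-l}$. This matches $\Phi_i$ precisely by condition (1).
\item The mixed entries (first row and first column) should reduce to the adjoint relations in (1), combined with the identity $\sum_{l=0}^{p-1}A^{(i)}_{p-l}D_{T_n}T_n^{p-1-l}$ etc. in the first column.
\item The $(1,1)$ entry reads
\[
T_{n-i}^*T_n^p+\sum_{j}\Big(\sum_l A^{(n-i)}_{p-l}D_{T_n}T_n^{j-1-l}\Big)^*D_{T_n}T_n^{j-1}.
\]
After substituting (1), this becomes exactly the right side of (4) plus $T_{n-i}^*T_n^p$. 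Condition (4) then gives $T_i$.
\end{itemize}
I expect this index bookkeeping, in particular matching the nested sums in (4) with the first-column entries, to be the hardest part. I would organize it by writing $V_n^p$ and $V_{n-i}$ in generating-function form.

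\textbf{Necessity.} Let $\mathbf V$ be any $\mathbf\Theta_n$-isometric dilation, and assume, as the statement intends, that it lives on the minimal space $\mathcal H\oplus\ell^2(\mathcal D_{T_n})$. By uniqueness of the minimal isometric dilation of $T_n$, we may identify $V_n$ with \eqref{V_n}. Since $\mathcal H$ is co-invariant, each $V_i$ is lower triangular with $(1,1)$ entry $T_i$. Commutation with $V_n$ forces the tail of $V_i$ to be Toeplitz with analytic symbol. The relation $V_i=V_{n-i}^*V_n^p$ from Theorem~\ref{Thm 5} then forces this symbol to have degree at most $p$; call its coefficients $A_k^{(i)}$. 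It also forces condition (1) and the stated form of the first column. Finally, reading the same block computations backward yields (2), (3), and (4).
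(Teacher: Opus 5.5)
Your proposal is correct and follows essentially the same route as the paper. For sufficiency, you verify the criterion of Theorem~\ref{Thm 5}(2) by block and Toeplitz computations: (3) gives $V_iV_n=V_nV_i$, (2) handles the tails, (1) matches $V_{n-i}^*V_n^p$ with $V_i$ on the tail and first column, and (4) matches the $(1,1)$ entry. For necessity, you identify $V_n$ with the Sch\"affer isometry, use the commutant of the shift, and compare symbols in $V_i=V_{n-i}^*V_n^p$; like the paper, this needs minimality of the dilation for the ``Moreover'' part, which you state explicitly, and your justification of lower-triangularity via co-invariance of $\mathcal H$ is the right one.
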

	
	\begin{proof}
Suppose there exist operators $A^{(i)}_k$, $0\le k\le p$, $1\le i\le n-1$, acting on $\mathcal{D}_{T_n}$ such that conditions \emph{(1)}--\emph{(4)} are satisfied. It is immediate from the definitions of $V_1,\ldots,V_n$ that
$
V_i^*|_{\mathcal H}=T_i^*, 1\le i\le n.$
Thus, it remains to show that $\mathbf V$ is a $\mathbf{\Theta}_n$-isometry. By Theorem~\ref{Thm 5}, it suffices to verify the following:
\begin{enumerate}
\item[(i)] $V_n$ is an isometry.

\item[(ii)] $V_iV_n=V_nV_i$, \quad $1\le i\le n-1$.

\item[(iii)] $V_iV_j=V_jV_i$, \quad $1\le i,j\le n-1$.

\item[(iv)] $V_i=V_{n-i}^*V_n^p$, \quad $1\le i\le n-1$.
\end{enumerate}

\noindent\textbf{Step 1.}
The construction of $V_n$ is due to Sch\"affer. Although Sch\"affer's construction yields a unitary dilation of a contraction, here we only require its isometric part. Hence, $V_n$ is an isometry.

		\medskip
		
		\noindent\textbf{Step 2.}
We next verify that $
V_iV_n=V_nV_i, 1\le i\le n-1.$
To this end, we first compute the products $V_iV_n$ and $V_nV_i$. Observe that
		\begin{equation}\label{V_iV_n}
			\begin{aligned}
				V_iV_n &=
				\left[
				\begin{smallmatrix}
					T_iT_n &\hspace{0.5cm} 0 &\hspace{0.5cm} 0 &\hspace{0.5cm} 0 &\hspace{0.5cm} \dots &\hspace{0.5cm} 0 &\hspace{0.5cm} \dots \hspace{0.5cm}\\
					\displaystyle\sum_{l=0}^{p-1} A^{(i)}_{p-l}D_{T_n}T^{p-l}_n + A^{(i)}_0D_{T_n} &\hspace{0.5cm} 0 &\hspace{0.5cm} 0 &\hspace{0.5cm} 0 &\hspace{0.5cm} \dots &\hspace{0.5cm} 0 &\hspace{0.5cm} \dots \hspace{0.5cm}\\
					\displaystyle\sum_{l=0}^{p-2} A^{(i)}_{p-l}D_{T_n}T^{p-1-l}_n + A^{(i)}_1D_{T_n} &\hspace{0.5cm} A^{(i)}_0 &\hspace{0.5cm} 0 &\hspace{0.5cm} 0 &\hspace{0.5cm} \dots &\hspace{0.5cm} 0 &\hspace{0.5cm} \dots \hspace{0.5cm}\\
					\vdots & \hspace{0.5cm} \vdots & \hspace{0.5cm} \vdots & \hspace{0.5cm} \vdots & \hspace{0.5cm} \ddots &\hspace{0.5cm} \vdots &\hspace{0.5cm} \ddots \hspace{0.5cm}\\
					\displaystyle\sum_{l=0}^{1} A^{(i)}_{p-l}D_{T_n}T^{2-l}_n + A^{(i)}_{p-2}D_{T_n} &\hspace{0.5cm} A^{(i)}_{p-3} &\hspace{0.5cm} A^{(i)}_{p-4} &\hspace{0.5cm} A^{(i)}_{p-5} &\hspace{0.5cm} \dots &\hspace{0.5cm} 0 &\hspace{0.5cm} \dots \hspace{0.5cm}\\
					A^{(i)}_pD_{T_n}T_n + A^{(i)}_{p-1}D_{T_n} &\hspace{0.5cm} A^{(i)}_{p-2} &\hspace{0.5cm} A^{(i)}_{p-3} &\hspace{0.5cm} A^{(i)}_{p-4} &\hspace{0.5cm} \dots &\hspace{0.5cm} A^{(i)}_0 &\hspace{0.5cm} \dots \hspace{0.5cm}\\
					A^{(i)}_pD_{T_n} &\hspace{0.5cm} A^{(i)}_{p-1} &\hspace{0.5cm} A^{(i)}_{p-2} &\hspace{0.5cm} A^{(i)}_{p-3} &\hspace{0.5cm} \dots &\hspace{0.5cm} A^{(i)}_1 &\hspace{0.5cm} \dots \hspace{0.5cm}\\
					0 &\hspace{0.5cm} A^{(i)}_p &\hspace{0.5cm} A^{(i)}_{p-1} &\hspace{0.5cm} A^{(i)}_{p-2} &\hspace{0.5cm} \dots &\hspace{0.5cm} A^{(i)}_2 &\hspace{0.5cm} \dots \hspace{0.5cm}\\
					\vdots &\hspace{0.5cm} \vdots &\hspace{0.5cm} \vdots &\hspace{0.5cm} \vdots &\hspace{0.5cm} \ddots &\hspace{0.5cm} \vdots &\hspace{0.5cm} \ddots \hspace{0.5cm}
				\end{smallmatrix}\right]
			\end{aligned}
		\end{equation}
		and
		\begin{equation}\label{V_nV_i}
			\begin{aligned}
				V_nV_i &=
				\left[
				\begin{smallmatrix}
					T_nT_i &\hspace{0.5cm} 0 &\hspace{0.5cm} 0 &\hspace{0.5cm} 0 &\hspace{0.5cm} \dots &\hspace{0.5cm} 0 &\hspace{0.5cm} \dots \hspace{0.5cm}\\
					D_{T_n}T_i &\hspace{0.5cm} 0 &\hspace{0.5cm} 0 &\hspace{0.5cm} 0 &\hspace{0.5cm} \dots &\hspace{0.5cm} 0 &\hspace{0.5cm} \dots \hspace{0.5cm}\\
					\displaystyle\sum_{l=0}^{p-1} A^{(i)}_{p-l}D_{T_n}T^{p-1-l}_n &\hspace{0.5cm} A^{(i)}_0 &\hspace{0.5cm} 0 &\hspace{0.5cm} 0 &\hspace{0.5cm} \dots &\hspace{0.5cm} 0 &\hspace{0.5cm} \dots \hspace{0.5cm}\\
					\vdots & \hspace{0.5cm} \vdots & \hspace{0.5cm} \vdots & \hspace{0.5cm} \vdots & \hspace{0.5cm} \ddots &\hspace{0.5cm} \vdots &\hspace{0.5cm} \ddots \hspace{0.5cm}\\
					\displaystyle\sum_{l=0}^{2} A^{(i)}_{2-l}D_{T_n}T^{2-l}_n &\hspace{0.5cm} A^{(i)}_{p-3} &\hspace{0.5cm} A^{(i)}_{p-4} &\hspace{0.5cm} A^{(i)}_{p-5} &\hspace{0.5cm} \dots &\hspace{0.5cm} 0 &\hspace{0.5cm} \dots \hspace{0.5cm}\\
					\displaystyle\sum_{l=0}^{1} A^{(i)}_{p-l}D_{T_n}T^{1-l}_n &\hspace{0.5cm} A^{(i)}_{p-2} &\hspace{0.5cm} A^{(i)}_{p-3} &\hspace{0.5cm} A^{(i)}_{p-4} &\hspace{0.5cm} \dots &\hspace{0.5cm} A^{(i)}_0 &\hspace{0.5cm} \dots \hspace{0.5cm}\\
					A^{(i)}_pD_{T_n} &\hspace{0.5cm} A^{(i)}_{p-1} &\hspace{0.5cm} A^{(i)}_{p-2} &\hspace{0.5cm} A^{(i)}_{p-3} &\hspace{0.5cm} \dots &\hspace{0.5cm} A^{(i)}_1 &\hspace{0.5cm} \dots \hspace{0.5cm}\\
					0 &\hspace{0.5cm} A^{(i)}_p &\hspace{0.5cm} A^{(i)}_{p-1} &\hspace{0.5cm} A^{(i)}_{p-2} &\hspace{0.5cm} \dots &\hspace{0.5cm} A^{(i)}_2 &\hspace{0.5cm} \dots \hspace{0.5cm}\\
					\vdots &\hspace{0.5cm} \vdots &\hspace{0.5cm} \vdots &\hspace{0.5cm} \vdots &\hspace{0.5cm} \ddots &\hspace{0.5cm} \vdots &\hspace{0.5cm} \ddots \hspace{0.5cm}
				\end{smallmatrix}\right]
			\end{aligned}
		\end{equation}
		By condition \emph{(3)}, the $(q,1)$-entries of the matrices in \eqref{V_iV_n} and \eqref{V_nV_i} coincide for $2\le q\le p+2$. The remaining entries are clearly identical. Therefore,
$
V_iV_n=V_nV_i, 1\le i\le n-1.$
		
		\medskip
\noindent\textbf{Step 3.}
We next verify that
$
V_iV_j=V_jV_i, 1\le i,j\le n-1.$
Observe that
\begin{equation}\label{V_iV_j}
\begin{aligned}
V_iV_j=
\left[
\begin{matrix}
T_iT_j & 0\\
\tilde{C} & \tilde{D}
\end{matrix}
\right],
\end{aligned}
\end{equation}
where
\begin{equation}\label{tilde C, tilde D}
\begin{aligned}
\tilde{C}
&=
\left[
\begin{smallmatrix}
C_{21}\\
\vdots\\
C_{p1}\\
\vdots
\end{smallmatrix}
\right]
:\mathcal{H}\to\ell^2(\mathcal{D}_{T_n}),
\\
\tilde{D}
&:\ell^2(\mathcal{D}_{T_n})\to\ell^2(\mathcal{D}_{T_n}).
\end{aligned}
\end{equation}		
More precisely, the entries of $\tilde C$ and $\tilde D$ are given by
\begin{equation*}
			\begin{aligned}
				&C_{21} = \displaystyle\sum_{l=0}^{p-1} A^{(i)}_{p-l}D_{T_n}T^{p-1-l}_nT_j + \displaystyle\sum_{l=0}^{p-1} A^{(i)}_0A^{(j)}_{p-l}D_{T_n}T^{p-1-l}_n,\\
				&C_{31} = \displaystyle\sum_{l=0}^{p-2} A^{(i)}_{p-l}D_{T_n}T^{p-2-l}_nT_j + 
				\displaystyle\sum_{l=0}^{p-1} A^{(i)}_1A^{(j)}_{p-l}D_{T_n}T^{p-1-l}_n + \displaystyle\sum_{l=0}^{p-2} A^{(i)}_0A^{(j)}_{p-l}D_{T_n}T^{p-2-l}_n,\\
				&\hspace{3.5cm} \vdots \hspace{3.5cm} \vdots \hspace{3.5cm} \vdots\\
				&C_{p1} = \displaystyle\sum_{l=0}^{1} A^{(i)}_{p-l}D_{T_n}T^{1-l}_nT_j + \displaystyle\sum_{l=0}^{p-1} A^{(i)}_{p-2}A^{(j)}_{p-l}D_{T_n}T^{p-1-l}_n + \dots + \displaystyle\sum_{l=0}^{1} A^{(i)}_0A^{(j)}_{p-l}D_{T_n}T^{1-l}_n,\\
				&C_{(p+1)1} = A^{(i)}_pD_{T_n}T_j + \displaystyle\sum_{l=0}^{p-1} A^{(i)}_{p-1}A^{(j)}_{p-l}D_{T_n}T^{p-1-l}_n + \dots + \displaystyle\sum_{l=0}^{1} A^{(i)}_1A^{(j)}_{p-l}D_{T_n}T^{1-l}_n + A^{(i)}_0A^{(j)}_pD_{T_n},\\
				&C_{(p+2)1} = \displaystyle\sum_{l=0}^{p-1} A^{(i)}_pA^{(j)}_{p-l}D_{T_n}T^{p-1-l}_n + \displaystyle\sum_{l=0}^{p-2} A^{(i)}_{p-1}A^{(j)}_{p-l}D_{T_n}T^{p-2-l}_n + \dots + A^{(i)}_1A^{(j)}_pD_{T_n},\\
				&\hspace{3.5cm} \vdots \hspace{3.5cm} \vdots \hspace{3.5cm} \vdots\\
				&C_{(2p)1} = \displaystyle\sum_{l=0}^{1} A^{(i)}_pA^{(j)}_{p-l}D_{T_n}T^{1-l}_n + A^{(i)}_{p-1}A^{(j)}_pD_{T_n},\\
				&C_{(2p+1)1} = A^{(i)}_pA^{(j)}_pD_{T_n}, \quad \text{and} \quad C_{l1} = 0, \quad \text{for} \quad l \ge 2p+2.
			\end{aligned}
		\end{equation*}
respectively, and
		\begin{equation*}
			\begin{aligned}
				\tilde{D} & =
				\left[
				\begin{smallmatrix}
					A^{(i)}_0A^{(j)}_0 &\hspace{0.5cm} 0 &\hspace{0.5cm} \dots &\hspace{0.5cm} 0 &\hspace{0.5cm} \dots \hspace{0.5cm}\\
					\displaystyle\sum_{l=0}^{1} A^{(i)}_lA^{(j)}_{1-l} &\hspace{0.5cm} A^{(i)}_0A^{(j)}_0 &\hspace{0.5cm} \dots &\hspace{0.5cm} 0 &\hspace{0.5cm} \dots \hspace{0.5cm}\\
					\vdots &\hspace{0.5cm} \vdots &\hspace{0.5cm} \ddots &\hspace{0.5cm} \vdots &\hspace{0.5cm} \ddots \hspace{0.5cm}\\
					\displaystyle\sum_{l=0}^{p-2} A^{(i)}_lA^{(j)}_{p-2-l} &\hspace{0.5cm} \displaystyle\sum_{l=0}^{p-3} A^{(i)}_lA^{(j)}_{p-3-l} &\hspace{0.5cm} \dots &\hspace{0.5cm} A^{(i)}_0A^{(j)}_0 &\hspace{0.5cm} \dots \hspace{0.5cm}\\
					\displaystyle\sum_{l=0}^{p-1} A^{(i)}_lA^{(j)}_{p-1-l} &\hspace{0.5cm} \displaystyle\sum_{l=0}^{p-2} A^{(i)}_lA^{(j)}_{p-2-l} &\hspace{0.5cm} \dots &\hspace{0.5cm} \displaystyle\sum_{l=0}^{1} A^{(i)}_lA^{(j)}_{1-l} &\hspace{0.5cm} \dots \hspace{0.5cm}\\
					\displaystyle\sum_{l=0}^{p} A^{(i)}_lA^{(j)}_{p-l} &\hspace{0.5cm} \displaystyle\sum_{l=0}^{p-1} A^{(i)}_lA^{(j)}_{p-1-l} &\hspace{0.5cm} \dots & \displaystyle\sum_{l=0}^{2} A^{(i)}_lA^{(j)}_{2-l} &\hspace{0.5cm} \dots \hspace{0.5cm}\\
					\vdots &\hspace{0.5cm} \vdots &\hspace{0.5cm} \ddots &\hspace{0.5cm} \vdots &\hspace{0.5cm} \ddots \hspace{0.5cm}\\
					\displaystyle\sum_{l=0}^{2p-1} A^{(i)}_lA^{(j)}_{2p-1-l} &\hspace{0.5cm} \displaystyle\sum_{l=0}^{2p-2} A^{(i)}_lA^{(j)}_{2p-2-l} &\hspace{0.5cm} \dots &\hspace{0.5cm} \displaystyle\sum_{l=0}^{p+1} A^{(i)}_lA^{(j)}_{p+1-l} &\hspace{0.5cm} \dots \hspace{0.5cm}\\
					A^{(i)}_pA^{(j)}_p &\hspace{0.5cm} \displaystyle\sum_{l=0}^{2p-1} A^{(i)}_lA^{(j)}_{2p-1-l} &\hspace{0.5cm} \dots &\hspace{0.5cm} \displaystyle\sum_{l=0}^{p+2} A^{(i)}_lA^{(j)}_{p+2-l} &\hspace{0.5cm} \dots \hspace{0.5cm}\\
					0 &\hspace{0.5cm} A^{(i)}_pA^{(j)}_p &\hspace{0.5cm} \dots &\hspace{0.5cm} \displaystyle\sum_{l=0}^{p+2} A^{(i)}_lA^{(j)}_{p+2-l} &\hspace{0.5cm} \dots \hspace{0.5cm}\\
					\vdots &\hspace{0.5cm} \vdots &\hspace{0.5cm} \ddots &\hspace{0.5cm} \vdots &\hspace{0.5cm} \ddots \hspace{0.5cm}
				\end{smallmatrix}\right].
			\end{aligned}
		\end{equation*}
Similarly,
$
V_jV_i=
\left[
\begin{smallmatrix}
T_jT_i & 0\\
\tilde{C}' & \tilde{D}'
\end{smallmatrix}
\right],$
where $\tilde{C}'$ and $\tilde{D}'$ are obtained from $\tilde{C}$ and $\tilde{D}$, respectively, by interchanging the indices $i$ and $j$ in \eqref{V_iV_j} and \eqref{tilde C, tilde D}.
By condition \emph{(2)}, the corresponding entries of $\tilde{D}$ and $\tilde{D}'$ are equal. Therefore, to prove
$
V_iV_j=V_jV_i,1\le i,j\le n-1,$
it suffices to verify the following identities:
\medskip
\begin{enumerate}
			\item[$\mathbf{(1)}$] \small $\displaystyle\sum_{l=0}^{p-1} A^{(i)}_{p-l}D_{T_n}T^{p-1-l}_nT_j + \displaystyle\sum_{l=0}^{p-1} A^{(i)}_0A^{(j)}_{p-l}D_{T_n}T^{p-1-l}_n = \displaystyle\sum_{l=0}^{p-1} A^{(j)}_{p-l}D_{T_n}T^{p-1-l}_nT_i + \displaystyle\sum_{l=0}^{p-1} A^{(j)}_0A^{(i)}_{p-l}D_{T_n}T^{p-1-l}_n$,
			
			\item[$\mathbf{(2)}$] $\displaystyle\sum_{l=0}^{p-2} A^{(i)}_{p-l}D_{T_n}T^{p-2-l}_nT_j + 
			\displaystyle\sum_{l=0}^{p-1} A^{(i)}_1A^{(j)}_{p-l}D_{T_n}T^{p-1-l}_n + \displaystyle\sum_{l=0}^{p-2} A^{(i)}_0A^{(j)}_{p-l}D_{T_n}T^{p-2-l}_n \\= \displaystyle\sum_{l=0}^{p-2} A^{(j)}_{p-l}D_{T_n}T^{p-2-l}_nT_i + 
			\displaystyle\sum_{l=0}^{p-1} A^{(j)}_1A^{(i)}_{p-l}D_{T_n}T^{p-1-l}_n + \displaystyle\sum_{l=0}^{p-2} A^{(j)}_0A^{(i)}_{p-l}D_{T_n}T^{p-2-l}_n$,
			
			\item[] $\hspace{3cm} \vdots \hspace{3cm} \vdots \hspace{3cm} \vdots  \hspace{3cm}$
			
			\item[$\mathbf{(p-1)}$] $\displaystyle\sum_{l=0}^{1} A^{(i)}_{p-l}D_{T_n}T^{1-l}_nT_j + \displaystyle\sum_{l=0}^{p-1} A^{(i)}_{p-2}A^{(j)}_{p-l}D_{T_n}T^{p-1-l}_n + \dots + \displaystyle\sum_{l=0}^{1} A^{(i)}_0A^{(j)}_{p-l}D_{T_n}T^{1-l}_n \\= \displaystyle\sum_{l=0}^{1} A^{(j)}_{p-l}D_{T_n}T^{1-l}_nT_i + \displaystyle\sum_{l=0}^{p-1} A^{(j)}_{p-2}A^{(i)}_{p-l}D_{T_n}T^{p-1-l}_n + \dots + \displaystyle\sum_{l=0}^{1} A^{(j)}_0A^{(i)}_{p-l}D_{T_n}T^{1-l}_n$,
			
			\item[$\mathbf{(p)}$] $A^{(i)}_pD_{T_n}T_j + \displaystyle\sum_{l=0}^{p-1} A^{(i)}_{p-1}A^{(j)}_{p-l}D_{T_n}T^{p-1-l}_n + \dots + \displaystyle\sum_{l=0}^{1} A^{(i)}_1A^{(j)}_{p-l}D_{T_n}T^{1-l}_n + A^{(i)}_0A^{(j)}_pD_{T_n} \\= A^{(j)}_pD_{T_n}T_i + \displaystyle\sum_{l=0}^{p-1} A^{(j)}_{p-1}A^{(i)}_{p-l}D_{T_n}T^{p-1-l}_n + \dots + \displaystyle\sum_{l=0}^{1} A^{(j)}_1A^{(i)}_{p-l}D_{T_n}T^{1-l}_n + A^{(j)}_0A^{(i)}_pD_{T_n}$,
			
			\item[$\mathbf{(p+1)}$] $\displaystyle\sum_{l=0}^{p-1} A^{(i)}_pA^{(j)}_{p-l}D_{T_n}T^{p-1-l}_n + \displaystyle\sum_{l=0}^{p-2} A^{(i)}_{p-1}A^{(j)}_{p-l}D_{T_n}T^{p-2-l}_n + \dots + A^{(i)}_1A^{(j)}_pD_{T_n}\\ = \displaystyle\sum_{l=0}^{p-1} A^{(j)}_pA^{(i)}_{p-l}D_{T_n}T^{p-1-l}_n + \displaystyle\sum_{l=0}^{p-2} A^{(j)}_{p-1}A^{(i)}_{p-l}D_{T_n}T^{p-2-l}_n + \dots + A^{(j)}_1A^{(i)}_pD_{T_n}$,
			
			\item[$\mathbf{(p+2)}$] $\displaystyle\sum_{l=0}^{1} A^{(i)}_pA^{(j)}_{p-l}D_{T_n}T^{1-l}_n + A^{(i)}_{p-1}A^{(j)}_pD_{T_n} = \displaystyle\sum_{l=0}^{1} A^{(j)}_pA^{(i)}_{p-l}D_{T_n}T^{1-l}_n + A^{(j)}_{p-1}A^{()}_pD_{T_n}$,
			
			\item[$\mathbf{(p+3)}$] $A^{(i)}_pA^{(j)}_pD_{T_n} = A^{(j)}_pA^{(i)}_pD_{T_n}$.
		\end{enumerate}
		
	\medskip
The proof of \textbf{(p+3)} follows immediately from condition \emph{(2)}. We prove only \textbf{(1)}, since the proofs of \textbf{(2)}--\textbf{(p+2)} are entirely analogous. Observe that proving \textbf{(1)} is equivalent to proving
		\begin{equation}\label{D 1}
			\begin{aligned}
				&\displaystyle\sum_{l=0}^{p-1} A^{(i)}_{p-l}D_{T_n}T^{p-1-l}_nT_j - \displaystyle\sum_{l=0}^{p-1} A^{(j)}_{p-l}D_{T_n}T^{p-1-l}_nT_i + \displaystyle\sum_{l=0}^{p-1} A^{(i)}_0A^{(j)}_{p-l}D_{T_n}T^{p-1-l}_n - \displaystyle\sum_{l=0}^{p-1} A^{(j)}_0A^{(i)}_{p-l}D_{T_n}T^{p-1-l}_n = 0.
			\end{aligned}
		\end{equation}
		Since $T_iT_n=T_nT_i$ for $1\le i\le n-1$, condition \emph{(3)} implies that the left-hand side of \eqref{D 1} can be written as
		\begin{equation}\label{D 2}
			\begin{aligned}
				&A^{(i)}_p\left(\displaystyle \sum_{k=0}^{p} A^{(j)}_kD_{T_n}T^k_n\right)T^{p-1}_n + \dots + A^{(i)}_2\left(\displaystyle \sum_{k=0}^{p} A^{(j)}_kD_{T_n}T^k_n\right)T_n + A^{(i)}_1\left(\displaystyle \sum_{k=0}^{p} A^{(j)}_kD_{T_n}T^k_n\right)\\
				&\hspace{0.5cm} - A^{(j)}_p\left(\displaystyle \sum_{k=0}^{p} A^{(i)}_kD_{T_n}T^k_n\right)T^{p-1}_n
				- \dots - A^{(j)}_2\left(\displaystyle \sum_{k=0}^{p} A^{(i)}_kD_{T_n}T^k_n\right)T_n - A^{(j)}_1\left(\displaystyle \sum_{k=0}^{p} A^{(i)}_kD_{T_n}T^k_n\right)\\
				&\hspace{0.5cm} + \displaystyle\sum_{l=0}^{p-1} A^{(i)}_0A^{(j)}_{p-l}D_{T_n}T^{p-1-l}_n - \displaystyle\sum_{l=0}^{p-1} A^{(j)}_0A^{(i)}_{p-l}D_{T_n}T^{p-1-l}_n.
			\end{aligned}
		\end{equation}Expanding the products and collecting like terms, we obtain
		\begin{equation}\label{D 3}
			\begin{aligned}
				&\left(\sum_{l=0}^{2p} [A^{(i)}_l, A^{(j)}_{p-l}]\right)D_{T_n}T^{2p-1}_n + \left(\sum_{l=0}^{2p-1} [A^{(i)}_l, A^{(j)}_{p-l}]\right)D_{T_n}T^{2p-2}_n + \dots + \left(\sum_{l=0}^{p+1} [A^{(i)}_l, A^{(j)}_{p-l}]\right)D_{T_n}T^{p}_n\\
				&\hspace{0.5cm} + \left(\sum_{l=0}^{p} [A^{(i)}_l, A^{(j)}_{p-l}]\right)D_{T_n}T^{p-1}_n + \dots + \left(\sum_{l=0}^{2} [A^{(i)}_l, A^{(j)}_{p-l}]\right)D_{T_n}T_n + \left(\sum_{l=0}^{1} [A^{(i)}_l, A^{(j)}_{p-l}]\right)D_{T_n}.
			\end{aligned}
		\end{equation}
		Therefore, condition \emph{(2)} implies that every coefficient in \eqref{D 3} vanishes. Hence, \eqref{D 3} is the zero operator. Consequently,
$
V_iV_j=V_jV_i, 1\le i,j\le n-1.$
This completes the proof of Step~3.		
		\medskip
		
		\noindent\textbf{Step 4.}
We next verify that
$
V_i=V_{n-i}^*V_n^p, 1\le i\le n-1.$
A straightforward computation shows that
		\begin{equation}\label{V^p_n}
			\begin{aligned}
				V^p_n &=
				\left[
				\begin{smallmatrix}
				T^p_n &\hspace{0.5cm} 0 &\hspace{0.5cm} 0 &\hspace{0.5cm} 0 &\hspace{0.5cm} \dots \hspace{0.5cm}\\
				D_{T_n}T^{p-1}_n &\hspace{0.5cm} 0 &\hspace{0.5cm} 0 &\hspace{0.5cm} 0 &\hspace{0.5cm} \dots \hspace{0.5cm}\\
				\vdots &\hspace{0.5cm} \vdots &\hspace{0.5cm} \vdots &\hspace{0.5cm} \vdots &\hspace{0.5cm} \ddots \hspace{0.5cm}\\
				D_{T_n}T_n &\hspace{0.5cm} 0 &\hspace{0.5cm} 0 &\hspace{0.5cm} 0 &\hspace{0.5cm} \dots \hspace{0.5cm}\\
				D_{T_n} &\hspace{0.5cm} 0 &\hspace{0.5cm} 0 &\hspace{0.5cm} 0 &\hspace{0.5cm} \dots \hspace{0.5cm}\\
				0 &\hspace{0.5cm} I &\hspace{0.5cm} 0 &\hspace{0.5cm} 0 &\hspace{0.5cm} \dots \hspace{0.5cm}\\
				0 &\hspace{0.5cm} 0 &\hspace{0.5cm} I &\hspace{0.5cm} 0 &\hspace{0.5cm} \dots \hspace{0.5cm}\\
				0 &\hspace{0.5cm} 0 &\hspace{0.5cm} 0 &\hspace{0.5cm} I &\hspace{0.5cm} \dots \hspace{0.5cm}\\
				\vdots &\hspace{0.5cm} \vdots &\hspace{0.5cm} \vdots &\hspace{0.5cm} \vdots &\hspace{0.5cm} \ddots \hspace{0.5cm}
				\end{smallmatrix}\right],
			\end{aligned}
		\end{equation}
		which admits the block representation
$
V_n^p=
\begin{bmatrix}
T_n^p & 0\\
C & D
\end{bmatrix},$
where
		\begin{equation}\label{C}
			\begin{aligned}
				C &=
				\left[
				\begin{smallmatrix}
					D_{T_n}T^{p-1}_n\\
					\vdots\\
					D_{T_n}T_n\\
					D_{T_n}\\
					0\\
					0\\
					\vdots
				\end{smallmatrix}\right] : \mathcal{H} \to \ell^2(\mathcal{D}_{T_n}),
			\end{aligned}
		\end{equation}
while
		\begin{equation}\label{D}
			\begin{aligned}
				D &=
				\left[
				\begin{smallmatrix}
					0 &\hspace{0.5cm} 0 &\hspace{0.5cm} 0 &\hspace{0.5cm} \dots\\
					\vdots & \hspace{0.5cm} \vdots & \hspace{0.5cm} \vdots & \hspace{0.5cm} \ddots\\
					0 &\hspace{0.5cm} 0 &\hspace{0.5cm} 0 &\hspace{0.5cm} \dots\\
					0 &\hspace{0.5cm} 0 &\hspace{0.5cm} 0 &\hspace{0.5cm} \dots\\
					I &\hspace{0.5cm} 0 &\hspace{0.5cm} 0 &\hspace{0.5cm} \dots\\
					0 &\hspace{0.5cm} I &\hspace{0.5cm} 0 &\hspace{0.5cm} \dots\\
					0 &\hspace{0.5cm} 0 &\hspace{0.5cm} I &\hspace{0.5cm} \dots\\
					\vdots &\hspace{0.5cm} \vdots &\hspace{0.5cm} \vdots &\hspace{0.5cm} \ddots
				\end{smallmatrix}\right] : \ell^2(\mathcal{D}_{T_n}) \to \ell^2(\mathcal{D}_{T_n}).
			\end{aligned}
		\end{equation}
		A direct computation yields
		\begin{equation}\label{D 4}
			\begin{aligned}
				V^*_{n-i}V^p_n &=
				\left[
				\begin{smallmatrix}
					A &\hspace{0.5cm} 0 &\hspace{0.5cm} 0 &\hspace{0.5cm} 0 &\hspace{0.5cm} \dots \hspace{0.5cm}\\
					\displaystyle\sum_{l=0}^{p-1} A^{(i)}_{p-l}D_{T_n}T^{p-1-l}_n &\hspace{0.5cm} A^{(i)}_0 &\hspace{0.5cm} 0 &\hspace{0.5cm} 0 &\hspace{0.5cm} \dots \hspace{0.5cm}\\
					\displaystyle\sum_{l=0}^{p-2} A^{(i)}_{p-l}D_{T_n}T^{p-2-l}_n &\hspace{0.5cm} A^{(i)}_1 &\hspace{0.5cm} A^{(i)}_0 &\hspace{0.5cm} 0 &\hspace{0.5cm} \dots \hspace{0.5cm}\\
					\vdots & \hspace{0.5cm} \vdots & \hspace{0.5cm} \vdots & \hspace{0.5cm} \vdots & \hspace{0.5cm} \ddots \hspace{0.5cm}\\
					\displaystyle\sum_{l=0}^{1} A^{(i)}_{p-l}D_{T_n}T^{1-l}_n &\hspace{0.5cm} A^{(i)}_{p-2} &\hspace{0.5cm} A^{(i)}_{p-3} &\hspace{0.5cm} A^{(i)}_{p-4} &\hspace{0.5cm} \dots \hspace{0.5cm}\\
					A^{(i)}_pD_{T_n} &\hspace{0.5cm} A^{(i)}_{p-1} &\hspace{0.5cm} A^{(i)}_{p-2} &\hspace{0.5cm} A^{(i)}_{p-3} &\hspace{0.5cm} \dots \hspace{0.5cm}\\
					0 &\hspace{0.5cm} A^{(i)}_p &\hspace{0.5cm} A^{(i)}_{p-1} &\hspace{0.5cm} A^{(i)}_{p-2} &\hspace{0.5cm} \dots \hspace{0.5cm}\\
					\vdots &\hspace{0.5cm} \vdots &\hspace{0.5cm} \vdots &\hspace{0.5cm} \vdots &\hspace{0.5cm} \ddots \hspace{0.5cm}
				\end{smallmatrix}\right],
			\end{aligned}
		\end{equation}
		where
		\begin{equation*}
			\begin{aligned}
				A &= T^*_{n-i}T^p_n + \sum_{l=0}^{p-1} T^{*p-1-l}_nD_{T_n}A^{(i)}_lD_{T_n}T^{p-1}_n + \sum_{l=0}^{p-2} T^{*p-2-l}_nD_{T_n}A^{(i)}_lD_{T_n}T^{p-2}_n\\
				&\hspace{2cm}+ \dots + \sum_{l=0}^{1} T^{*1-l}_nD_{T_n}A^{(i)}_lD_{T_n}T_n + D_{T_n}A^{(i)}_0D_{T_n}.
			\end{aligned}
		\end{equation*}
		By condition \emph{(4)}, we obtain $A=T_i$. Hence, $
V_i=V_{n-i}^*V_n^p, 1\le i\le n-1.$
Since $(\gamma_1V_1,\ldots,\gamma_{n-1}V_{n-1})$ is a $\Gamma_{n-1}$-contraction, it follows from \emph{(i)}--\emph{(iv)} that $\mathbf{V}$ is a $\mathbf{\Theta}_n$-contraction. This completes the proof of the sufficiency.		
		\medskip

Suppose that $\mathbf{V}=(V_1,\ldots,V_n)$ is a minimal $\mathbf{\Theta}_n$-isometric dilation of $\mathbf{T}$. Since the minimal isometric dilation of a contraction is unique up to unitary equivalence, we may assume, without loss of generality, that $V_n$ is the Sch\"affer dilation of $T_n$. Consequently, $V_n$ has the block operator matrix representation
\begin{equation*}
\begin{aligned}
V_n=
\left[
\begin{smallmatrix}
T_n &\hspace{0.5cm} 0 &\hspace{0.5cm} 0 &\hspace{0.5cm} 0 &\hspace{0.5cm} \dots \hspace{0.5cm}\\
D_{T_n} &\hspace{0.5cm} 0 &\hspace{0.5cm} 0 &\hspace{0.5cm} 0 &\hspace{0.5cm} \dots \hspace{0.5cm}\\
0 &\hspace{0.5cm} I &\hspace{0.5cm} 0 &\hspace{0.5cm} 0 &\hspace{0.5cm} \dots \hspace{0.5cm}\\
0 &\hspace{0.5cm} 0 &\hspace{0.5cm} I &\hspace{0.5cm} 0 &\hspace{0.5cm} \dots \hspace{0.5cm}\\
0 &\hspace{0.5cm} 0 &\hspace{0.5cm} 0 &\hspace{0.5cm} I &\hspace{0.5cm} \dots \hspace{0.5cm}\\
\vdots &\hspace{0.5cm} \vdots &\hspace{0.5cm} \vdots &\hspace{0.5cm} \vdots &\hspace{0.5cm} \ddots \hspace{0.5cm}
\end{smallmatrix}
\right].
\end{aligned}
\end{equation*}
Then, with respect to the decomposition
$
\mathcal{K}=\mathcal{H}\oplus\ell^2(\mathcal{D}_{T_n}),$
the operator $V_n$ admits the block operator matrix representation
\begin{equation*}
\begin{aligned}
V_n=
\left[
\begin{matrix}
T_n & 0\\
C_n & D_n
\end{matrix}
\right],
\end{aligned}
\end{equation*}
where
\begin{equation*}
\begin{aligned}
C_n=
\left[
\begin{smallmatrix}
D_{T_n}\\
0\\
0\\
\vdots
\end{smallmatrix}
\right]
:\mathcal{H}\longrightarrow\ell^2(\mathcal{D}_{T_n}),
\end{aligned}
\end{equation*}
and
\begin{equation*}
\begin{aligned}
D_n=
\left[
\begin{smallmatrix}
0 &\hspace{0.5cm} 0 &\hspace{0.5cm} 0 &\hspace{0.5cm} \dots \hspace{0.5cm}\\
I &\hspace{0.5cm} 0 &\hspace{0.5cm} 0 &\hspace{0.5cm} \dots \hspace{0.5cm}\\
0 &\hspace{0.5cm} I &\hspace{0.5cm} 0 &\hspace{0.5cm} \dots \hspace{0.5cm}\\
\vdots &\hspace{0.5cm} \vdots &\hspace{0.5cm} \vdots &\hspace{0.5cm} \ddots \hspace{0.5cm}
\end{smallmatrix}
\right]
:\ell^2(\mathcal{D}_{T_n})
\longrightarrow
\ell^2(\mathcal{D}_{T_n}).
\end{aligned}
\end{equation*}
Since $V_i$ commutes with $V_n$, it follows that, with respect to the decomposition
$
\mathcal{K}=\mathcal{H}\oplus\ell^2(\mathcal{D}_{T_n}),$
the operator $V_i$ has the block operator matrix representation
\begin{equation*}
\begin{aligned}
V_i=
\left[
\begin{matrix}
T_i & 0\\
C_i & D_i
\end{matrix}
\right],\qquad 1\le i\le n-1.
\end{aligned}
\end{equation*}
Identify the vector-valued Hardy space $H^2(\mathcal{D}_{T_n})$ with $\ell^2(\mathcal{D}_{T_n})$ via the canonical Hilbert space isomorphism. Under this identification, the multiplication operator $M_z^{\mathcal{D}_{T_n}}$ on $H^2(\mathcal{D}_{T_n})$ is unitarily equivalent to $D_n$. Since $V_iV_n=V_nV_i$, it follows that
$
D_iM_z^{\mathcal{D}_{T_n}}
=
M_z^{\mathcal{D}_{T_n}}D_i, 1\le i\le n-1.$
Hence, by the commutant theorem for the unilateral shift, there exists
$
\Phi_i\in H^\infty\!\big(\mathcal{L}(\mathcal{D}_{T_n})\big)$
such that $
D_i=M_{\Phi_i}^{\mathcal{D}_{T_n}}.$
Since $\mathbf{V}$ is a $\mathbf{\Theta}_n$-isometry, Theorem~\ref{Thm 2} implies that
$
V_i=V_{n-i}^*V_n^p,1\le i\le n-1.$
Hence,
\begin{equation}\label{D 5}
\begin{aligned}
\left[
\begin{matrix}
T_i & 0\\
C_i & M^{\mathcal{D}_{T_n}}_{\Phi_i}
\end{matrix}
\right]
=V_i
&=V_{n-i}^*V_n^p\\
&=
\left[
\begin{matrix}
T_{n-i}^* & C_{n-i}^*\\
0 & (M_{\Phi_{n-i}}^{\mathcal{D}_{T_n}})^*
\end{matrix}
\right]
\left[
\begin{matrix}
T_n^p & 0\\
C & M_{z^p}^{\mathcal{D}_{T_n}}
\end{matrix}
\right]\\
&=
\left[
\begin{matrix}
T_{n-i}^*T_n^p+C_{n-i}^*C &
C_{n-i}^*M_{z^p}^{\mathcal{D}_{T_n}}\\
(M_{\Phi_{n-i}}^{\mathcal{D}_{T_n}})^*C &
(M_{\Phi_{n-i}}^{\mathcal{D}_{T_n}})^*
M_{z^p}^{\mathcal{D}_{T_n}}
\end{matrix}
\right],
\end{aligned}
\end{equation}
where $C$ is as defined in \eqref{C}. Comparing the corresponding block entries in \eqref{D 5}, we obtain
\begin{equation}\label{D 6}
\begin{aligned}
&T_i=T_{n-i}^*T_n^p+C_{n-i}^*C,\qquad
C_{n-i}^*M_{z^p}^{\mathcal{D}_{T_n}}=0,\\
&C_i=(M_{\Phi_{n-i}}^{\mathcal{D}_{T_n}})^*C,\qquad
M_{\Phi_i}^{\mathcal{D}_{T_n}}
=(M_{\Phi_{n-i}}^{\mathcal{D}_{T_n}})^*
M_{z^p}^{\mathcal{D}_{T_n}},
\end{aligned}
\end{equation}
for all $1\le i\le n-1$.
Let
$
\Phi_i(z)=\sum_{k\ge0}A_k^{(i)}z^k, 1\le i\le n-1,$
be the power series expansion of $\Phi_i$. Since
$
M_{\Phi_i}^{\mathcal{D}_{T_n}}
=
\left(M_{\Phi_{n-i}}^{\mathcal{D}_{T_n}}\right)^*
M_{z^p}^{\mathcal{D}_{T_n}},$
we obtain
\begin{equation}\label{D 7}
\begin{aligned}
\sum_{k\ge0}A_k^{(i)}z^k
=
\left(\sum_{k\ge0}A_k^{(n-i)*}\,\overline{z}^{\,k}\right)z^p,
\qquad z\in\mathbb{T}.
\end{aligned}
\end{equation}
Comparing the coefficients of $z^k$ and $\overline{z}^{\,k}$ in \eqref{D 7}, we obtain
\[
A_l^{(i)*}=A_{p-l}^{(n-i)},
\quad
0\le l\le p,\quad 1\le i\le n-1,\quad \text{and}\quad
A_l^{(i)}=0,
\qquad l>p.
\]
Hence,
$
\Phi_i(z)=\sum_{k=0}^{p}A_k^{(i)}z^k, z\in\mathbb{T}.$
Then, with respect to the standard orthogonal decomposition of
$
\ell^2(\mathcal{D}_{T_n}),$
the operator $M^{\mathcal{D}_{T_n}}_{\Phi_i}$ has the block operator matrix representation
\begin{equation}\label{D 8}
\begin{aligned}
M^{\mathcal{D}_{T_n}}_{\Phi_i}
=
\left[
\begin{smallmatrix}
A^{(i)}_0 &\hspace{0.5cm} 0 &\hspace{0.5cm} 0 &\hspace{0.5cm} \dots \hspace{0.5cm}\\
A^{(i)}_1 &\hspace{0.5cm} A^{(i)}_0 &\hspace{0.5cm} 0 &\hspace{0.5cm} \dots \hspace{0.5cm}\\
\vdots & \hspace{0.5cm} \vdots & \hspace{0.5cm} \vdots & \hspace{0.5cm} \ddots \hspace{0.5cm}\\
A^{(i)}_{p-2} &\hspace{0.5cm} A^{(i)}_{p-3} &\hspace{0.5cm} A^{(i)}_{p-4} &\hspace{0.5cm} \dots \hspace{0.5cm}\\
A^{(i)}_{p-1} &\hspace{0.5cm} A^{(i)}_{p-2} &\hspace{0.5cm} A^{(i)}_{p-3} &\hspace{0.5cm} \dots \hspace{0.5cm}\\
A^{(i)}_p &\hspace{0.5cm} A^{(i)}_{p-1} &\hspace{0.5cm} A^{(i)}_{p-2} &\hspace{0.5cm} \dots \hspace{0.5cm}\\
\vdots &\hspace{0.5cm} \vdots &\hspace{0.5cm} \vdots &\hspace{0.5cm} \ddots \hspace{0.5cm}
\end{smallmatrix}
\right].
\end{aligned}
\end{equation}
Using \eqref{D 6}, we obtain
\begin{equation}\label{D 9}
\begin{aligned}
C_i
&=(M^{\mathcal{D}_{T_n}}_{\Phi_{n-i}})^*C\\
&=
\left[
\begin{smallmatrix}
\displaystyle\sum_{l=0}^{p-1}A^{(i)}_{p-l}D_{T_n}T_n^{\,p-1-l}\\
\displaystyle\sum_{l=0}^{p-2}A^{(i)}_{p-l}D_{T_n}T_n^{\,p-2-l}\\
\vdots\\
\displaystyle\sum_{l=0}^{1}A^{(i)}_{p-l}D_{T_n}T_n^{\,1-l}\\
A^{(i)}_pD_{T_n}\\
0\\
\vdots
\end{smallmatrix}
\right].
\end{aligned}
\end{equation}
Therefore, by \eqref{D 8} and \eqref{D 9}, the operator $V_i$ admits the following explicit block operator matrix representation:
		\begin{equation*}
			\begin{aligned}
				V_i &=
				\left[
				\begin{smallmatrix}
					T_i &\hspace{0.5cm} 0 &\hspace{0.5cm} 0 &\hspace{0.5cm} 0 &\hspace{0.5cm} \dots \hspace{0.5cm}\\
					\displaystyle\sum_{l=0}^{p-1} A^{(i)}_{p-l}D_{T_n}T^{p-1-l}_n &\hspace{0.5cm} A^{(i)}_0 &\hspace{0.5cm} 0 &\hspace{0.5cm} 0 &\hspace{0.5cm} \dots \hspace{0.5cm}\\
					\displaystyle\sum_{l=0}^{p-2} A^{(i)}_{p-l}D_{T_n}T^{p-2-l}_n &\hspace{0.5cm} A^{(i)}_1 &\hspace{0.5cm} A^{(i)}_0 &\hspace{0.5cm} 0 &\hspace{0.5cm} \dots \hspace{0.5cm}\\
					\vdots & \hspace{0.5cm} \vdots & \hspace{0.5cm} \vdots & \hspace{0.5cm} \vdots & \hspace{0.5cm} \ddots \hspace{0.5cm}\\
					\displaystyle\sum_{l=0}^{1} A^{(i)}_{p-l}D_{T_n}T^{1-l}_n &\hspace{0.5cm} A^{(i)}_{p-2} &\hspace{0.5cm} A^{(i)}_{p-3} &\hspace{0.5cm} A^{(i)}_{p-4} &\hspace{0.5cm} \dots \hspace{0.5cm}\\
					A^{(i)}_pD_{T_n} &\hspace{0.5cm} A^{(i)}_{p-1} &\hspace{0.5cm} A^{(i)}_{p-2} &\hspace{0.5cm} A^{(i)}_{p-3} &\hspace{0.5cm} \dots \hspace{0.5cm}\\
					0 &\hspace{0.5cm} A^{(i)}_p &\hspace{0.5cm} A^{(i)}_{p-1} &\hspace{0.5cm} A^{(i)}_{p-2} &\hspace{0.5cm} \dots \hspace{0.5cm}\\
					\vdots &\hspace{0.5cm} \vdots &\hspace{0.5cm} \vdots &\hspace{0.5cm} \vdots &\hspace{0.5cm} \ddots \hspace{0.5cm}
				\end{smallmatrix}\right],
			\end{aligned}
		\end{equation*}
		for $1 \le i \le n-1$.
Since $V_iV_j=V_jV_i$ for $1\le i,j\le n-1$, we have
$
M_{\Phi_i}^{\mathcal{D}_{T_n}}M_{\Phi_j}^{\mathcal{D}_{T_n}}
=
M_{\Phi_j}^{\mathcal{D}_{T_n}}M_{\Phi_i}^{\mathcal{D}_{T_n}},$
which implies that
\begin{equation}\label{D 10}
\begin{aligned}
\Phi_i(z)\Phi_j(z)
=
\Phi_j(z)\Phi_i(z),
\quad z\in\mathbb{T}.
\end{aligned}
\end{equation}
Comparing the coefficients of $1,z,z^2,\ldots,z^{2p}$ in \eqref{D 10}, we obtain
\[
\sum_{l=0}^{k}[A_l^{(i)},A_{k-l}^{(j)}]=0,
\quad 0\le k\le 2p.
\]
Furthermore, since $V_i$ commutes with $V_n$, comparing the $(2,1)$-entries of $V_iV_n$ and $V_nV_i$, we deduce that
\[
D_{T_n}T_i
=
\sum_{k=0}^{p}A_k^{(i)}D_{T_n}T_n^k,
\quad 1\le i\le n-1.
\]		
Using \eqref{C}, \eqref{D 6}, and \eqref{D 9}, we obtain
\begin{equation*}
\begin{aligned}
T_i-T_{n-i}^*T_n^p
&=C_{n-i}^*C\\
&=
\left[
\begin{smallmatrix}
\displaystyle\sum_{l=0}^{p-1}A_{p-l}^{(n-i)}D_{T_n}T_n^{p-1-l}\\
\displaystyle\sum_{l=0}^{p-2}A_{p-l}^{(n-i)}D_{T_n}T_n^{p-2-l}\\
\vdots\\
\displaystyle\sum_{l=0}^{1}A_{p-l}^{(n-i)}D_{T_n}T_n^{1-l}\\
A_p^{(n-i)*}D_{T_n}\\
0\\
\vdots
\end{smallmatrix}
\right]^*
\left[
\begin{smallmatrix}
D_{T_n}T_n^{p-1}\\
\vdots\\
D_{T_n}T_n\\
D_{T_n}\\
0\\
0\\
\vdots
\end{smallmatrix}
\right]\\
&=
\sum_{l=0}^{p-1}T_n^{*\,p-1-l}D_{T_n}A_l^{(i)}D_{T_n}T_n^{p-1}
+\sum_{l=0}^{p-2}T_n^{*\,p-2-l}D_{T_n}A_l^{(i)}D_{T_n}T_n^{p-2}\\
&
+\cdots
+\sum_{l=0}^{1}T_n^{*\,1-l}D_{T_n}A_l^{(i)}D_{T_n}T_n
+D_{T_n}A_0^{(i)}D_{T_n}.
\end{aligned}
\end{equation*}		
Since $\mathbf{V}$ is a $\mathbf{\Theta}_n$-isometry, Theorem~\ref{Thm 2} implies that $(\gamma_1T_1,\ldots,\gamma_{n-1}T_{n-1})$ is a $\Gamma_{n-1}$-contraction. Therefore, conditions \emph{(1)}--\emph{(4)} are necessary for the existence of a $\mathbf{\Theta}_n$-isometric lift of $\mathbf{T}$. This completes the proof.

	\end{proof}
Assume that the fundamental equations \eqref{Fundamental} admit unique solutions. Then Theorem~\ref{Conditional Dilation} implies that every minimal $\mathbf{\Theta}_n$-isometric dilation of a $\mathbf{\Theta}_n$-contraction is unitarily equivalent to the $\mathbf{\Theta}_n$-isometric dilation $\mathbf{V}$ constructed in Theorem~\ref{Conditional Dilation}. We record this observation as the following theorem.

\begin{thm}\label{Unitary Equivalence}
Let $\mathbf{T}=(T_1,\ldots,T_n)$ be a $\mathbf{\Theta}_n$-contraction on a Hilbert space $\mathcal{H}$, and suppose that the fundamental equations \eqref{Fundamental} admit unique solutions in $\mathcal{B}(\mathcal{D}_{T_n})$. If $\mathbf{T}$ admits a minimal $\mathbf{\Theta}_n$-isometric dilation $\mathbf{W}=(W_1,\ldots,W_n)$, then $\mathbf{W}$ is unitarily equivalent to the $\mathbf{\Theta}_n$-isometric dilation $\mathbf{V}$ constructed in Theorem~\ref{Conditional Dilation}. In particular, $\mathbf{V}$ satisfies conditions \emph{(1)}--\emph{(4)} of Theorem~\ref{Conditional Dilation}.
\end{thm}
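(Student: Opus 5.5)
The plan is to reduce everything to the ``Moreover'' part of Theorem~\ref{Conditional Dilation}, together with the uniqueness of the minimal isometric dilation of the single contraction $T_n$.

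\textbf{Step 1: normalize $W_n$.} Let $\mathbf W=(W_1,\ldots,W_n)$ on $\mathcal K'$ be a minimal $\mathbf{\Theta}_n$-isometric dilation of $\mathbf T$. By Theorem~\ref{Thm 5}, $W_n$ is an isometry. From $W_n^*|_{\mathcal H}=T_n^*$, the space $\mathcal H$ is co-invariant for $W_n$, so $W_n$ is an isometric lift of $T_n$. The minimality condition
\[
\mathcal K'=\overline{\operatorname{span}}\{W_n^kh : h\in\mathcal H,\ k\ge0\}
\]
says that $W_n$ is a \emph{minimal} isometric dilation of $T_n$. By the uniqueness of minimal isometric dilations, there is a unitary $U:\mathcal K'\to\mathcal K=\mathcal H\oplus\ell^2(\mathcal D_{T_n})$ that fixes $\mathcal H$ pointwise and satisfies $UW_nU^*=V_n$, where $V_n$ is the Sch\"affer operator \eqref{V_n}.

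\textbf{Step 2: read off the structure of the conjugated tuple.} Set $W_i'=UW_iU^*$. Then $(W_1',\ldots,W_{n-1}',V_n)$ is a $\mathbf{\Theta}_n$-isometric dilation of $\mathbf T$ on $\mathcal K$, since unitary equivalence preserves the $\mathbf{\Theta}_n$-isometry property and $U$ fixes $\mathcal H$. The necessity half of Theorem~\ref{Conditional Dilation} now gives operators $\widetilde A^{(i)}_k\in\mathcal B(\mathcal D_{T_n})$ such that each $W_i'$ has exactly the form \eqref{V_i} with these entries, and conditions (1)--(4) hold for them. In particular, the $\widetilde A^{(i)}_k$ solve the fundamental equations \eqref{Fundamental}.

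\textbf{Step 3: identify with $\mathbf V$.} By the uniqueness hypothesis, $\widetilde A^{(i)}_k=A^{(i)}_k$, where the $A^{(i)}_k$ are the solutions used to build $\mathbf V$. Hence $W_i'=V_i$ for every $i$, so $U$ implements the unitary equivalence $\mathbf W\cong\mathbf V$. This also shows that $\mathbf V$ is a genuine $\mathbf{\Theta}_n$-isometric dilation and that its coefficients satisfy (1)--(4).

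\textbf{Main obstacle.} The delicate point is Step 3. Uniqueness is assumed only for \eqref{Fundamental}, which is condition (4). We therefore have to make sure that the $\mathbf V$ of Theorem~\ref{Conditional Dilation} is built from that same unique solution. If $\mathbf V$ were instead built from data assumed to satisfy all of (1)--(4), then (4) alone already forces the two coefficient families to coincide, so there is no gap. A secondary point needing care is Step 1: the unitary $U$ must restrict to the identity on $\mathcal H$, so that the compression relations $W_i^*|_{\mathcal H}=T_i^*$ transfer to the conjugated tuple. This is exactly what the standard uniqueness theorem for minimal isometric dilations provides.
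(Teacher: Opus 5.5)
Your proposal is correct and takes essentially the paper's route: the paper justifies this theorem only by invoking the necessity half of Theorem~\ref{Conditional Dilation} (where it assumes $V_n$ is the Sch\"affer isometry, a normalization you carry out more carefully with a unitary fixing $\mathcal H$) together with the assumed uniqueness of solutions of \eqref{Fundamental}. One small precision: $A^{(i)}_p$ does not occur in \eqref{Fundamental}, so condition (1), $A^{(i)}_p=A^{(n-i)*}_0$, which both coefficient families satisfy, is what matches the last coefficients, not (4) alone.
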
	
A dilation theorem for $\Gamma_n$-contractions was established in \cite[Theorem 6.6]{A. Pal}. As a consequence, the author obtained a  minimal $\Gamma_n$-isometric dilation in \cite[Corollary 6.7]{A. Pal} from the  $\Gamma_n$-unitary dilation. We show that the  $\Gamma_n$-isometric dilation can also be derived from Theorem~\ref{Conditional Dilation}. In fact, \cite[Corollary 6.7]{A. Pal} is a special case of the following theorem when $m=p=1$.
First, observe that the sufficiency part of Theorem~\ref{Conditional Dilation} remains valid if the hypothesis
$
(\gamma_1T_1,\ldots,\gamma_{n-1}T_{n-1})
\text{ is a }\Gamma_{n-1}\text{-contraction}$
is replaced by the stronger assumption that $\mathbf{V}$ is a $\mathbf{\Theta}_n$-contraction. The proof is identical to that of Theorem~\ref{Conditional Dilation}. Therefore, we state the following result without proof.
\begin{thm}\label{Re Conditional Dilation}
Let $\mathbf{T}=(T_1,\ldots,T_n)$ be a $\mathbf{\Theta}_n$-contraction on a Hilbert space $\mathcal{H}$, and let
\[
A_k^{(i)}\in\mathcal{B}(\mathcal{D}_{T_n}),
\qquad
0\le k\le p,\quad 1\le i\le n-1.
\]
Let $\mathcal{K}$ and $\mathbf{V}=(V_1,\ldots,V_n)$ be as defined in Theorem~\ref{Conditional Dilation}. If $\mathbf{V}$ is a $\mathbf{\Theta}_n$-contraction and conditions \emph{(1)}--\emph{(4)} of Theorem~\ref{Conditional Dilation} are satisfied, then $\mathbf{V}$ is a $\mathbf{\Theta}_n$-isometric dilation of $\mathbf{T}$.
\end{thm}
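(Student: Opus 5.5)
To prove this, I will follow the sufficiency argument of Theorem~\ref{Conditional Dilation} verbatim, with one change: at the final step, the hypothesis that $\mathbf V$ is a $\mathbf{\Theta}_n$-contraction replaces the $\Gamma_{n-1}$-contraction hypothesis. First, the compression property $V_i^*|_{\mathcal H}=T_i^*$ for $1\le i\le n$ is read off directly from the block forms \eqref{V_i} and \eqref{V_n}. In each matrix the first row is $(T_i,0,0,\dots)$, so $\mathcal H$ is invariant under every $V_i^*$, and the compression there is $T_i^*$. Since $\mathcal K=\mathcal H\oplus\ell^2(\mathcal D_{T_n})$ is the Sch\"affer space, the minimality requirement is also automatic once $V_n$ is the Sch\"affer isometry.

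The remaining task is to show that $\mathbf V$ is a $\mathbf{\Theta}_n$-isometry. By the equivalence $(1)\Leftrightarrow(4)$ of Theorem~\ref{Thm 5}, it suffices to know that $\mathbf V$ is a $\mathbf{\Theta}_n$-contraction and that $V_n$ is an isometry. The first fact is the hypothesis. The second is Step~1 of the proof of Theorem~\ref{Conditional Dilation}: the Sch\"affer matrix \eqref{V_n} satisfies $V_n^*V_n = T_n^*T_n + D_{T_n}^2 \oplus I = I$.

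Commutativity is already part of the definition of a $\mathbf{\Theta}_n$-contraction, so it is formally covered by the hypothesis. For completeness, and to make clear that conditions (1)--(4) are consistent with this hypothesis, I will note that Steps~2--4 of the earlier proof derive the relations themselves from conditions (1)--(4) alone, with no use of the $\Gamma_{n-1}$ assumption:
\begin{itemize}
\item $V_iV_n=V_nV_i$ follows from condition (3);
\item $V_iV_j=V_jV_i$ follows from conditions (2) and (3);
\item $V_i=V_{n-i}^*V_n^p$ follows from conditions (1) and (4), through the computation \eqref{D 4}.
\end{itemize}

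The only point that needs checking is that the earlier proof used the $\Gamma_{n-1}$ hypothesis solely to invoke Theorem~\ref{Thm 5}(2), and nowhere in the matrix computations. With the new hypothesis I will instead invoke Theorem~\ref{Thm 5}(4), which requires only the $\mathbf{\Theta}_n$-contraction property together with the isometry of $V_n$. Condition (2) of Theorem~\ref{Thm 5} then also follows, because Theorem~\ref{Thm 5} lists these conditions as equivalent.

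I expect no real obstacle, since the hypothesis is strictly stronger than the one used before. The only care needed is to confirm that the operator identities in Steps~2--4 are purely algebraic consequences of conditions (1)--(4).
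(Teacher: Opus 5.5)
Your argument is correct, but it takes a shorter route than the paper intends. The paper says the proof is "identical" to the sufficiency half of Theorem~\ref{Conditional Dilation}. There, Steps 2--4 use conditions (1)--(4) to produce the commutation relations and $V_i=V_{n-i}^*V_n^p$. The $\mathbf{\Theta}_n$-contraction hypothesis on $\mathbf V$ would then serve only to supply the $\Gamma_{n-1}$-contraction property of $(\gamma_1V_1,\dots,\gamma_{n-1}V_{n-1})$, and Theorem~\ref{Thm 5}(2) finishes.

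You go straight to Theorem~\ref{Thm 5}(4). The hypothesis, together with the fact that the Sch\"affer matrix $V_n$ is an isometry, already makes $\mathbf V$ a $\mathbf{\Theta}_n$-isometry. The compression $V_i^*|_{\mathcal H}=T_i^*$ is read off the lower-triangular block form. In your proof, Steps 2--4, and with them conditions (1)--(4), play no logical role; the paragraph re-deriving them is optional commentary.

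Your route is more transparent: it shows that under the hypothesis "$\mathbf V$ is a $\mathbf{\Theta}_n$-contraction" the statement is essentially immediate from Theorem~\ref{Thm 5}. In fact the relevant direction is the reverse of the "consistency" you mention. Once $V_i=V_{n-i}^*V_n^p$ holds, the block comparisons in the necessity half of Theorem~\ref{Conditional Dilation} force (1)--(4). The $(2,2)$ entry gives (1), and the $(1,1)$ entry combined with (1) gives (4). From the commutation relations, the $(2,1)$ entry of $V_iV_n=V_nV_i$ gives (3), and the $(2,2)$ entry of $V_iV_j=V_jV_i$ gives (2). So these conditions are consequences of the hypothesis, not additional assumptions.

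The paper's route instead shows that the algebraic relations come from (1)--(4) alone. That is what matters in Theorem~\ref{Conditional Dilation}, where only the weaker $\Gamma_{n-1}$ assumption is available. Your remark on minimality is correct but not needed, since the statement does not claim minimality.
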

It is readily verified that, when $m=p=1$, Theorem~\ref{Re Conditional Dilation} reduces to \cite[Corollary 6.7]{A. Pal}, which provides the minimal $\Gamma_n$-isometric dilation of a $\Gamma_n$-contraction under conditions \emph{(1)} and \emph{(2)}. We rewrite condition \emph{(3)} as follows:
\begin{equation}\label{Condition 3}
\begin{aligned}
D_{T_n}T_i
=
A^{(i)}_0D_{T_n}
+A^{(i)}_1D_{T_n}T_n
+\cdots
+A^{(i)}_pD_{T_n}T_n^p,
\quad
1\le i\le n-1.
\end{aligned}
\end{equation}
When $m=p=1$, equation \eqref{Condition 3} reduces to
\begin{equation}\label{Condition 3 reduced}
\begin{aligned}
D_{T_n}T_i
=
A^{(i)}_0D_{T_n}
+A^{(i)}_1D_{T_n}T_n,
\quad
1\le i\le n-1,
\end{aligned}
\end{equation}
which is equivalent to
\begin{equation}\label{Condition 3 in Gamma_n contraction}
\begin{aligned}
D_{S_n}S_i
=
X_iD_{S_n}
+X_{n-i}^*D_{S_n}S_n,
\quad
1\le i\le n-1.
\end{aligned}
\end{equation}
Bisai and Pal proved in \cite[Theorem 2.1]{S. Pal} that a tuple of operators $(X_1,\ldots,X_{n-1})$ in $\mathcal{B}(\mathcal{D}_{S_n})$ is the fundamental operator tuple of a $\Gamma_n$-contraction $(S_1,\ldots,S_n)$ if and only if it satisfies \eqref{Condition 3 in Gamma_n contraction}. This naturally leads to the following question.

\medskip

\noindent\textbf{Open Question.}
Do the fundamental operators associated with a $\mathbf{\Theta}_n$-contraction satisfy equation \eqref{Condition 3}?

In this subsection, we also establish necessary conditions for the existence of a
$\mathbf{\Theta}_n$-isometric dilation.

\begin{thm}\label{Necessary Conditions}
Let $\mathbf{T}=(T_1,\dots,T_n)$ be a $\mathbf{\Theta}_n$-contraction on a Hilbert space $\mathcal H$. If $\mathbf{T}$ admits a $\mathbf{\Theta}_n$-isometric dilation, then the following conditions hold.

\begin{enumerate}
\item
The $(n-1)$-tuple
$
(\gamma_1E_1,\dots,\gamma_{n-1}E_{n-1}),$
where $E_1,\dots,E_{n-1}$ are the fundamental operators of the
$\Gamma_n$-contraction
$(T_1,\dots,T_{n-1},T_n^p)$,
admits a joint dilation to a commuting $(n-1)$-tuple of subnormal operators
$
(\gamma_1\widetilde{T}_1,\dots,\gamma_{n-1}\widetilde{T}_{n-1}).$
That is, there exists an isometry
$
\Lambda:\mathcal D_{T_n^p}\longrightarrow \mathcal K$
into a Hilbert space $\mathcal K$ such that
$
E_i=\Lambda^*\widetilde{T}_i\Lambda, 1\le i\le n-1,$
where
$(\gamma_1\widetilde{T}_1,\dots,\gamma_{n-1}\widetilde{T}_{n-1})$
extends to a commuting $(n-1)$-tuple of normal operators
$
(\gamma_1U_1,\dots,\gamma_{n-1}U_{n-1})$
whose joint spectrum is contained in $\Gamma_{n-1}$.

\item
For every $1\le i,j\le n-1$,
$
\bigl(E_{n-i}^*D_{T_n}T_j
-
E_{n-j}^*D_{T_n}T_i\bigr)
\big|_{\operatorname{ker} D_{T_n^p}}
=0.$

\item
For every $1\le i,j\le n-1$,
$
\bigl(E_i^*E_j^*
-
E_{n-i}^*E_i^*\bigr)
D_{T_n^p}T_n^p
\big|_{\operatorname{ker} D_{T_n^p}}
=0.$

\item
There exists an isometry
$
\Lambda_*:\mathcal D_{T_n}\longrightarrow\mathcal K_*$
into a Hilbert space $\mathcal K_*$ such that
\[
\bigl(
\widetilde{T}_{n-i}^*\Lambda D_{T_n^p}T_n
-
\Lambda_*D_{T_n}T_i
\bigr)
\Big|_{\operatorname{ker} {D_{T_n}}\cap\operatorname{ker} {D_{T_n^p}}}
=0,
\]
for every $1\le i\le n-1$.
\end{enumerate}
\end{thm}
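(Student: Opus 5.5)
The plan is to move the problem to the $\Gamma_n$-setting and then apply the known necessary conditions for $\Gamma_n$-isometric dilation. Suppose $\mathbf V=(V_1,\dots,V_n)$ on $\mathcal K_0\supseteq\mathcal H$ is a $\mathbf{\Theta}_n$-isometric dilation of $\mathbf T$. By Theorem~\ref{Thm 6}, $(V_1,\dots,V_{n-1},V_n^p)$ is a $\Gamma_n$-isometry. Since $V_i^*|_{\mathcal H}=T_i^*$, we get $(V_n^p)^*|_{\mathcal H}=T_n^{*p}$. Hence $(V_1,\dots,V_{n-1},V_n^p)$ is a $\Gamma_n$-isometric dilation of the $\Gamma_n$-contraction $\mathbf S=(T_1,\dots,T_{n-1},T_n^p)$, which is a $\Gamma_n$-contraction by Lemma~\ref{Lem 1}. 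We may compress to the minimal part $\overline{\operatorname{span}}\{V_n^{pk}h\}$, which is jointly invariant for the $V_i$ because they commute with $V_n^p$. This lets us assume the dilation is minimal for $V_n^p$, and so realize $V_n^p$ in Sch\"affer form on $\mathcal H\oplus\ell^2(\mathcal D_{T_n^p})$.

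The argument then follows the necessity half of Theorem~\ref{Conditional Dilation} with $p=1$ applied to $\mathbf S$. By the commutant lifting for the shift, the lower-right corner of $V_i$ is $M_{\Phi_i}$ with $\Phi_i(z)=E_i+E_{n-i}^*z$. Here the $E_i$ are forced to be the (unique, \cite[Theorem 4.4]{A. Pal}) fundamental operators of $\mathbf S$, because the $(1,1)$-block identity reproduces the fundamental equations \eqref{Gamma_n Fundamental}. The $(2,1)$-entry gives $D_{T_n^p}T_i=E_iD_{T_n^p}+E_{n-i}^*D_{T_n^p}T_n^p$.

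\emph{Condition (1).} By Theorem~\ref{Thm 5}, $(\gamma_1V_1,\dots,\gamma_{n-1}V_{n-1})$ is a $\Gamma_{n-1}$-contraction. Because $\mathbf V$ extends to a $\mathbf{\Theta}_n$-unitary $\mathbf N$, Theorem~\ref{Thm 1}(3) shows that $(\gamma_1N_1,\dots,\gamma_{n-1}N_{n-1})$ is a commuting normal tuple with spectrum in $\Gamma_{n-1}$. Thus $\widetilde T_i:=V_i|_{\ell^2}$ restricted appropriately, or more directly $V_i$ itself, is subnormal with normal extension $U_i=N_i$. Let $\Lambda:\mathcal D_{T_n^p}\to\mathcal K_0$ be the isometric embedding onto the first copy of $\mathcal D_{T_n^p}$ in $\ell^2$. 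Then $\Lambda^*V_i\Lambda$ is the $(0,0)$ Taylor coefficient of $M_{\Phi_i}$ compressed, which is $E_i$. This gives (1).

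\emph{Conditions (2) and (3).} These should come from $V_iV_j=V_jV_i$, read off on the first-column entries $C_{21},C_{31}$ exactly as in Step~3 of Theorem~\ref{Conditional Dilation}. For $p=1$, the $(3,1)$ entry of $V_iV_j$ is $E_{n-i}^*D_{T_n^p}T_j+E_{n-i}^*E_j^*D_{T_n^p}T_n^p$ (up to the labelling of factors). Equating it with the $i\leftrightarrow j$ version and restricting to vectors where the $D_{T_n^p}$ terms vanish isolates (2). The companion $(2,1)$-relation, after substituting the $(2,1)$-identity above, isolates (3). Some care will be needed to rewrite $D_{T_n^p}T_j$ through $D_{T_n}$, since the statement mixes $D_{T_n}$ and $D_{T_n^p}$. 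I would use $D_{T_n^p}^2=\sum_{k<p}T_n^{*k}D_{T_n}^2T_n^k$. This identity shows $\ker D_{T_n^p}\subseteq\ker D_{T_n}$, so the restrictions are consistent.

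\emph{Condition (4).} Here I would use the minimal Sch\"affer realization of $V_n$ itself, on $\mathcal H\oplus\ell^2(\mathcal D_{T_n})$ from Theorem~\ref{Conditional Dilation}. The embedding of its first defect copy is $\Lambda_*$. Comparing two expressions for the first-column entry of $V_{n-i}^*V_n^p$, one from the $T_n$-model via condition (3) of Theorem~\ref{Conditional Dilation} and one from the $T_n^p$-model via $\widetilde T_{n-i}^*\Lambda D_{T_n^p}T_n$, the two agree modulo terms that vanish on $\ker D_{T_n}\cap\ker D_{T_n^p}$.

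I expect this last identification to be the main obstacle: matching the two Sch\"affer models, for $T_n$ and for $T_n^p$, inside one space $\mathcal K$. The intertwining unitary between the minimal spaces must be tracked explicitly, and $\Lambda,\Lambda_*$ chosen compatibly.
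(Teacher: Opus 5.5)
Your argument breaks at the reduction to $\overline{\operatorname{span}}\{V_n^{pk}h:h\in\mathcal H,\ k\ge0\}$. That space is invariant under $V_n^p$, but commuting with $V_n^p$ does not make it invariant under $V_i$. Indeed $V_iV_n^{pk}h=V_n^{pk}V_ih$, and $V_ih\notin\mathcal H$ in general, because $\mathcal H$ is only co-invariant.

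Here is a concrete counterexample. Take $n=2$, $m=p=1$, $\mathbf T=(0,0)$ on $\mathcal H=\mathbb C e_1\subset H^2(\mathbb C^2)$ (constants), and $\mathbf V=(M_{F+F^*z},M_z)$ with $F=\left[\begin{smallmatrix}0&0\\1&0\end{smallmatrix}\right]$. By Theorem~\ref{Thm 5}(2), $\mathbf V$ is a $\mathbf{\Theta}_2$-isometry. Since $V_1^*e_1=F^*e_1=0=V_2^*e_1$, it dilates $\mathbf T$. Yet $V_1e_1=e_2\notin H^2\otimes e_1=\overline{\operatorname{span}}\{V_2^ke_1\}$.

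For $p\ge2$ the $V_n^p$-cyclic space need not even be $V_n$-invariant: if $T_n=0$, then $V_nh\perp V_n^{pk}h'$ for all $k\ge0$ and $h,h'\in\mathcal H$. So you would also lose the $V_n$ you need for (4). This is the real source of the ``two Sch\"affer models'' difficulty you anticipated.

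Nor can the reduction be rescued. On a Sch\"affer space your Toeplitz picture forces $M_{\Phi_i}M_{\Phi_j}=M_{\Phi_j}M_{\Phi_i}$, i.e.\ condition (2) of Theorem~\ref{Conditional Dilation}. Example~\ref{example} is a $\Gamma_3$-contraction (the case $p=1$) that has a $\Gamma_3$-isometric dilation although that condition fails. So three parts of your proof rest on a structure that is not available: your choice of $\Lambda$ as the first-copy embedding in (1), the entries you read off for (2)--(3), and the model comparison in (4). Separately, $D_{T_n^p}^2=\sum_{k<p}T_n^{*k}D_{T_n}^2T_n^k$ only yields $\ker D_{T_n^p}\subseteq\ker D_{T_n}$; it does not convert $D_{T_n^p}T_j$ into $D_{T_n}T_j$.

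No model is needed. Write $V_i=\left[\begin{smallmatrix}T_i&0\\C_i&\widetilde T_i\end{smallmatrix}\right]$ relative to $\mathcal H\oplus(\mathcal K\ominus\mathcal H)$, so that $\widetilde T_i=V_i|_{\mathcal K\ominus\mathcal H}$; these are jointly subnormal by your $\mathbf{\Theta}_n$-unitary argument. Let $C=\sum_{l=0}^{p-1}\widetilde T_n^{\,l}C_nT_n^{p-1-l}$ be the $(2,1)$ entry of $V_n^p$. Since $V_n$ and $V_n^p$ are isometries, $C_n^*C_n=D_{T_n}^2$ and $C^*C=D_{T_n^p}^2$. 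This gives isometries $\Lambda_*,\Lambda$ into the same space $\mathcal K\ominus\mathcal H$ with $\Lambda_*D_{T_n}=C_n$ and $\Lambda D_{T_n^p}=C$. The relation $V_i=V_{n-i}^*V_n^p$ gives $C_i=\widetilde T_{n-i}^*C$. Then the $(2,1)$ entry of $V_iV_n=V_nV_i$ reads
\[
\widetilde T_{n-i}^*\Lambda D_{T_n^p}T_n-\Lambda_*D_{T_n}T_i
=\widetilde T_n\widetilde T_{n-i}^*\Lambda D_{T_n^p}-\widetilde T_i\Lambda_*D_{T_n},
\]
whose right side vanishes on $\ker D_{T_n}\cap\ker D_{T_n^p}$. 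That is (4), and it is exactly the paper's argument.

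For (1)--(3) the paper applies Mandal's necessary conditions for $\Gamma_n$-isometric dilations to $(T_1,\dots,T_{n-1},T_n^p)$. Your first paragraph correctly produces the needed dilation $(V_1,\dots,V_{n-1},V_n^p)$. If you want (1) self-contained with the same $\Lambda$: the $(1,1)$ entry of $V_i=V_{n-i}^*V_n^p$ gives $T_i-T_{n-i}^*T_n^p=C_{n-i}^*C=D_{T_n^p}(\Lambda^*\widetilde T_i\Lambda)D_{T_n^p}$. Hence $E_i=\Lambda^*\widetilde T_i\Lambda$ by uniqueness of $\Gamma_n$ fundamental operators. Likewise, the $(2,1)$ entry of $V_iV_j=V_jV_i$, restricted to $\ker D_{T_n^p}$ and compressed by $\Lambda^*$, gives (2) in its $D_{T_n^p}$ form.
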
	
\begin{proof}
Suppose that $\mathbf{V}=(V_1,\dots,V_n)$ is a $\mathbf{\Theta}_n$-isometric dilation of $\mathbf{T}$. Since $\overline{\mathbf{\Theta}}_n$ is polynomially convex, it suffices to work with polynomials instead of the entire algebra $\mathcal{O}(\overline{\mathbf{\Theta}}_n)$. Hence, without loss of generality, we may assume that
\[
\mathcal{K}
=
\overline{\operatorname{span}}
\left\{
V_1^{k_1}\cdots V_n^{k_n}h:
h\in\mathcal H,\,
k_1,\dots,k_n\in\mathbb N\cup\{0\}
\right\}.
\]
By the definition of a $\mathbf{\Theta}_n$-isometric dilation,
\[
(V_1^*,\dots,V_n^*)|_{\mathcal H}
=
(T_1^*,\dots,T_n^*).
\]
Since $\mathbf{T}$ is a $\mathbf{\Theta}_n$-contraction, Lemma~\ref{Lem 1} implies that
$
(T_1,\dots,T_{n-1},T_n^p)$
is a $\Gamma_n$-contraction. Therefore, assertions $(1)$, $(2)$ and $(3)$ follow immediately from \cite[Theorem~2.6]{Mandal}. It remains only to establish $(4)$.
Since $\mathbf{V}$ is a $\mathbf{\Theta}_n$-isometric dilation of $\mathbf{T}$, the operator $V_n$ is an isometry. With respect to the orthogonal decomposition
$
\mathcal K
=
\mathcal H
\oplus
(\mathcal K\ominus\mathcal H),$
each operator $V_i$ admits the block matrix representation
\[
V_i=
\begin{bmatrix}
T_i & 0\\
C_i & \widetilde{T}_i
\end{bmatrix},
\quad
1\le i\le n.
\]
Since $V_n$ is an isometry, we have
\begin{equation}\label{N1}
\begin{aligned}
T_n^*T_n+C_n^*C_n=I_{\mathcal H},
\qquad
\widetilde{T}_n^*\widetilde{T}_n
=
I_{\mathcal K\ominus\mathcal H}.
\end{aligned}
\end{equation}
It follows from \eqref{N1} that there exists an isometry
$
\Lambda_*:\mathcal D_{T_n}\longrightarrow\mathcal K\ominus\mathcal H$
such that
\begin{equation}\label{N2}
\Lambda_*D_{T_n}=C_n.
\end{equation}
Observe that the $2\times2$ block operator matrix representation of $V_n^p$ is
\[
V_n^p=
\begin{bmatrix}
T_n^p & 0\\
C & \widetilde{T}_n^p
\end{bmatrix},
\]
where
$
C=\sum_{l=0}^{p-1}\widetilde{T}_n^{\,l}C_nT_n^{\,p-1-l}.$
Since $V_n^p$ is also an isometry, we obtain
\begin{equation}\label{N3}
\begin{aligned}
T_n^{*p}T_n^p+C^*C=I_{\mathcal H},
\qquad
\widetilde{T}_n^{*p}\widetilde{T}_n^p
=
I_{\mathcal K\ominus\mathcal H}.
\end{aligned}
\end{equation}
Hence, arguing as above, there exists an isometry
$
\Lambda:\mathcal D_{T_n^p}\longrightarrow\mathcal K\ominus\mathcal H$
such that
\begin{equation}\label{N4}
\Lambda D_{T_n^p}=C.
\end{equation}
Since $\mathbf V$ is a $\mathbf{\Theta}_n$-isometry, Theorem~\ref{Thm 5} yields
$
V_i=V_{n-i}^*V_n^p,1\le i\le n-1.$
Therefore,
\begin{equation}\label{N5}
\begin{aligned}
\begin{bmatrix}
T_i & 0\\
C_i & \widetilde{T}_i
\end{bmatrix}
&=
\begin{bmatrix}
T_{n-i}^* & C_{n-i}^*\\
0 & \widetilde{T}_{n-i}^*
\end{bmatrix}
\begin{bmatrix}
T_n^p & 0\\
C & \widetilde{T}_n^p
\end{bmatrix}  \\
&=
\begin{bmatrix}
T_{n-i}^*T_n^p+C_{n-i}^*C
&
C_{n-i}^*\widetilde{T}_n^p\\
\widetilde{T}_{n-i}^*C
&
\widetilde{T}_{n-i}^*\widetilde{T}_n^p
\end{bmatrix}.
\end{aligned}
\end{equation}
Comparing the corresponding matrix entries in \eqref{N5}, we obtain
\begin{equation}\label{N6}
\begin{aligned}
T_i-T_{n-i}^*T_n^p = C_{n-i}^*C,\quad
C_{n-i}^*\widetilde{T}_n^p =0,\quad
C_i =\widetilde{T}_{n-i}^*C,\quad
\widetilde{T}_i &=\widetilde{T}_{n-i}^*\widetilde{T}_n^p,
\end{aligned}
\quad 1\le i\le n-1.
\end{equation}
Since $V_i$ commutes with $V_n$, comparing the $(2,1)$ entries of the operator matrices for $V_iV_n$ and $V_nV_i$, we obtain
\begin{equation}\label{N7}
\begin{aligned}
C_iT_n+\widetilde{T}_iC_n
=
C_nT_i+\widetilde{T}_nC_i,
\qquad 1\le i\le n-1.
\end{aligned}
\end{equation}
Substituting the identities from \eqref{N6} into \eqref{N7}, we obtain
\begin{equation}\label{N8}
\begin{aligned}
\widetilde{T}_{n-i}^*CT_n+\widetilde{T}_iC_n
=
C_nT_i+\widetilde{T}_n\widetilde{T}_{n-i}^*C.
\end{aligned}
\end{equation}
Using \eqref{N2}, \eqref{N4}, and \eqref{N8}, it follows that
\[
\widetilde{T}_{n-i}^*\Lambda D_{T_n^p}T_n
-
\Lambda_*D_{T_n}T_i
=
\widetilde{T}_n\widetilde{T}_{n-i}^*\Lambda D_{T_n^p}
-
\widetilde{T}_i\Lambda_*D_{T_n}.
\]
Now let
$
x\in\operatorname{ker} {D_{T_n}}\cap\operatorname{ker} {D_{T_n^p}}.$
Since
$
D_{T_n}x=D_{T_n^p}x=0,$
the right-hand side vanishes when applied to $x$. Consequently,
\[
\left(
\widetilde{T}_{n-i}^*\Lambda D_{T_n^p}T_n
-
\Lambda_*D_{T_n}T_i
\right)x=0.
\]
Hence,
\[
\left(
\widetilde{T}_{n-i}^*\Lambda D_{T_n^p}T_n
-
\Lambda_*D_{T_n}T_i
\right)\Big|_{\operatorname{ker} {D_{T_n}}\cap\operatorname{ker} {D_{T_n^p}}}
=0,
\]
which establishes $(4)$ and completes the proof.
\end{proof}	
From the above discussion on the dilation theory of $\mathbf{\Theta}_n$-contractions, it follows that the isometric dilation theory for $\Gamma_n$-contractions arises as a special case of the $\mathbf{\Theta}_n$-isometric dilation theory. In the following example, we exhibit a $\Gamma_3$-contraction that admits a $\Gamma_3$-isometric dilation, while condition $(3)$ of \cite[Proposition~2.10]{Mandal} fails; namely,
\[E_1^*E_1-E_1E_1^*\neq E_2^*E_2-E_2E_2^*.\]
Consequently, the sufficient conditions for the existence of a $\Gamma_3$-isometric dilation established in \cite[Theorem~6.6 and Corollary~6.7]{A. Pal} are, in general, not necessary. This remains true even for $\Gamma_3$-contractions $(T_1,T_2,T_3)$ satisfying the following conditions:
\begin{enumerate}
\item $(T_1,T_2)$ is a pair of commuting contractions on a Hilbert space $\mathcal H$;
\item $T_3$ is a partial isometry on $\mathcal H$.
\end{enumerate}	
We shall show that $(T_1,T_2,T_3)$ is a $\Gamma_3$-contraction admitting a
$\Gamma_3$-isometric dilation, while its fundamental operators fail to satisfy
\[
E_1^*E_1-E_1E_1^*\neq E_2^*E_2-E_2E_2^*.
\]
Hence the sufficient conditions in
\cite[Theorem~6.6 and Corollary~6.7]{A. Pal}
are not necessary.
\begin{exam}\label{example}
We illustrate that the sufficient conditions for the existence of a
$\Gamma_3$-isometric dilation given in
\cite[Theorem~6.6 and Corollary~6.7]{A. Pal}
are not necessary.
Let $n=3$, $m=3$, and $p=1$. Consider the following commuting triple of
contractions on $H^2\oplus H^2\oplus H^2$:
\[
(S_1,S_2,V)=
\left(
\begin{bmatrix}
0&0&I_{H^2}\\
0&0&0\\
I_{H^2}&0&0
\end{bmatrix},
\begin{bmatrix}
T_z&0&0\\
0&0&0\\
0&0&T_z
\end{bmatrix},
\begin{bmatrix}
I_{H^2}&0&0\\
0&T_z&0\\
0&0&I_{H^2}
\end{bmatrix}
\right),
\]
where $T_z$ denotes the unilateral shift on $H^2$.
Define
\[
T_1=\frac{1}{3}(S_1^3+S_2^3+V^3)
=\frac{1}{3}
\begin{bmatrix}
I_{H^2}+T_z^3&0&I_{H^2}\\
0&T_z&0\\
I_{H^2}&0&I_{H^2}+T_z^3
\end{bmatrix},
\]
\[
T_2=\frac{1}{3}(S_1^3S_2^3+S_2^3V^3+S_1^3V^3)
=\frac{1}{3}
\begin{bmatrix}
T_z^3&0&I_{H^2}+T_z^3\\
0&0&0\\
I_{H^2}+T_z^3&0&T_z^3
\end{bmatrix},\quad \text{and}\quad
T_3=S_1^3S_2^3V^3=
\begin{bmatrix}
0&0&T_z^3\\
0&0&0\\
T_z^3&0&0
\end{bmatrix}.
\]
Therefore, by \cite[Theorem~2.9]{Mandal}, the triple
$(T_1,T_2,T_3)$ is a $\Gamma_3$-contraction admitting a
$\Gamma_3$-isometric dilation. Since $T_z$ is an isometry, it is
straightforward to verify that $T_3$ is a partial isometry. Hence,
by \cite[Proposition~2.10]{Mandal}, we determine the fundamental
operators of $(T_1,T_2,T_3)$.
We first compute the defect operator of $T_3$. Since
\[
\begin{aligned}
D_{T_3}^2
&=I-T_3^*T_3\\
&=
\begin{bmatrix}
I_{H^2} & 0 & 0\\
0 & I_{H^2} & 0\\
0 & 0 & I_{H^2}
\end{bmatrix}
-
\begin{bmatrix}
0 & 0 & T_z^3\\
0 & 0 & 0\\
T_z^3 & 0 & 0
\end{bmatrix}^{*}
\begin{bmatrix}
0 & 0 & T_z^3\\
0 & 0 & 0\\
T_z^3 & 0 & 0
\end{bmatrix}\\
&=
\begin{bmatrix}
0 & 0 & 0\\
0 & I_{H^2} & 0\\
0 & 0 & 0
\end{bmatrix},
\end{aligned}
\]
it follows that
\[
D_{T_3}=
\begin{bmatrix}
0 & 0 & 0\\
0 & I_{H^2} & 0\\
0 & 0 & 0
\end{bmatrix},
\]
since $D_{T_3}$ is an orthogonal projection.
Now set
\[
(E_1,E_2)=\left(\frac{T_z^3}{3},\,0\right).
\]
We verify that $(E_1,E_2)$ is the fundamental operator pair associated with the $\Gamma_3$-contraction $(T_1,T_2,T_3)$. A direct computation shows that
\begin{align*}
T_1-T_2^*T_3
&=\frac{1}{3}
\begin{bmatrix}
I_{H^2}+T_z^3 & 0 & I_{H^2}\\
0&T_z&0\\
I_{H^2}&0&I_{H^2}+T_z^3
\end{bmatrix}
-\frac{1}{3}
\begin{bmatrix}
T_z^3&0&I_{H^2}+T_z^3\\
0&0&0\\
I_{H^2}+T_z^3&0&T_z^3
\end{bmatrix}^{*}
\begin{bmatrix}
0&0&T_z^3\\
0&0&0\\
T_z^3&0&0
\end{bmatrix} \\
&=\frac{1}{3}
\begin{bmatrix}
0&0&0\\
0&T_z^3&0\\
0&0&0
\end{bmatrix}
=
D_{T_3}
\begin{bmatrix}
0&0&0\\
0&\dfrac{T_z^3}{3}&0\\
0&0&0
\end{bmatrix}
D_{T_3}
=D_{T_3}E_1D_{T_3},
\end{align*}
where
\[
E_1=
\begin{bmatrix}
0&0&0\\
0&\dfrac{T_z^3}{3}&0\\
0&0&0
\end{bmatrix}.
\]
Similarly,
\begin{align*}
T_2-T_1^*T_3
&=\frac{1}{3}
\begin{bmatrix}
T_z^3&0&I_{H^2}+T_z^3\\
0&0&0\\
I_{H^2}+T_z^3&0&T_z^3
\end{bmatrix}
-\frac{1}{3}
\begin{bmatrix}
I_{H^2}+T_z^3&0&I_{H^2}\\
0&T_z^3&0\\
I_{H^2}&0&I_{H^2}+T_z^3
\end{bmatrix}^{*}
\begin{bmatrix}
0&0&T_z^3\\
0&0&0\\
T_z^3&0&0
\end{bmatrix}\\
&=0
=D_{T_3}E_2D_{T_3},
\end{align*}
where $E_2=0$.
Hence, by the uniqueness of the fundamental operators, we conclude that
\[
(F_1,F_2)=(E_1,E_2)
=\left(
\begin{bmatrix}
0&0&0\\
0&\dfrac{T_z^3}{3}&0\\
0&0&0
\end{bmatrix},
\,0
\right),
\]
where $(F_1,F_2)$ denotes the fundamental operator pair of the
$\Gamma_3$-contraction $(T_1,T_2,T_3)$. A direct computation shows that
\begin{equation}\label{Ex 1}
\begin{aligned}
F_1^*F_1-F_1F_1^*
=
\frac{1}{9}(I-T_z^3T_z^{*3})
\quad\text{and}\quad
F_2^*F_2-F_2F_2^*=0.
\end{aligned}
\end{equation}
It is straightforward to verify that
$
\operatorname{ker} T_3=\{0\}\oplus H^2\oplus\{0\}$
and
$
(T_1,T_2)|_{\operatorname{ker} T_3}
=(D_1,D_2)
=
\left(\frac{T_z^3}{3},\,0\right).$
Moreover,
\begin{equation}\label{Ex 2}
\begin{aligned}
D_1^*D_1-D_1D_1^*
=
\frac{1}{9}(I-T_z^3T_z^{3*})
\quad\text{and}\quad
D_2^*D_2-D_2D_2^*=0.
\end{aligned}
\end{equation}
Hence,
\[
D_1^*D_1-D_1D_1^*
\neq
D_2^*D_2-D_2D_2^*,
\]
showing that condition $(3)$ of \cite[Proposition~2.10]{Mandal} is not satisfied.
Now let
$
A_0^{(1)}=F_1,
A_1^{(1)}=F_2.$
Then
$
A_0^{(2)}=A_1^{(1)*}=F_2^*,
A_1^{(2)}=A_0^{(1)*}=F_1^*.$
Therefore, by \eqref{Ex 1},
\begin{align*}
\sum_{l=0}^{1}[A_l^{(1)},A_{1-l}^{(2)}]
&=
[A_0^{(1)},A_1^{(2)}]
+
[A_1^{(1)},A_0^{(2)}]\\
&=
F_1F_1^*-F_1^*F_1
+
F_2F_2^*-F_2^*F_2\\
&=
\frac{1}{9}(T_z^3T_z^{*3}-I)\neq0.
\end{align*}
Consequently, condition $(2)$ of Theorem~\ref{Conditional Dilation} is not necessary for the existence of a $\mathbf{\Theta}_n$-isometric dilation.
\end{exam}

\subsection{Special class of  $\mathbf{\Theta}_n$-contractions and its $\mathbf{\Theta}_n$-isometric dilations}
Apart from the class of $\mathbf{\Theta}_n$-contractions admitting  dilations, there are other $\mathbf{\Theta}_n$-contractions that possess $\mathbf{\Theta}_n$-isometric dilations. We conclude this paper by exhibiting one such class.
\begin{thm}\label{Thm 9}
Let $(T_1,T_2)$ be a pair of commuting contractions on a Hilbert space $\mathcal{H}$. Then
\[
\mathbf{T}=(T_1^m+T_2^m,(T_1T_2)^{m/p})
\]
is a $\mathbf{\Theta}_2$-contraction.
\end{thm}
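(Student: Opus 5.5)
The plan is to realize $\mathbf{T}$ as the image of $(T_1,T_2)$ under the generalized symmetrization map. Then Andô's theorem gives the spectral-set inequality. For $n=2$ the map is
\[
\theta(z_1,z_2)=(z_1^m+z_2^m,\,(z_1z_2)^{q}),\qquad q=m/p ,
\]
and by definition $\theta(\overline{\mathbb D}^{\,2})=\overline{\mathbf{\Theta}}_2$. Since $T_1,T_2$ commute, for every polynomial $f$ in two variables we have
\[
f(T_1^m+T_2^m,(T_1T_2)^{q})=(f\circ\theta)(T_1,T_2),
\]
where $f\circ\theta$ is a polynomial in two variables. By Theorem~\ref{Theta_n Contraction Char} it therefore suffices to show
\[
\|f(\mathbf T)\|\le \|f\|_{\infty,\overline{\mathbf{\Theta}}_2}
\]
for all polynomials $f$. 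The commutativity of the two components of $\mathbf T$ is clear, since both are polynomials in $T_1,T_2$.

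The key step is Andô's inequality: for commuting contractions $T_1,T_2$ and every polynomial $g$ in two variables,
\[
\|g(T_1,T_2)\|\le\sup_{\overline{\mathbb D}^{\,2}}|g| .
\]
This follows from Andô's commuting unitary dilation $(U_1,U_2)$ on $\mathcal K\supseteq\mathcal H$, with $g(T_1,T_2)=P_{\mathcal H}g(U_1,U_2)|_{\mathcal H}$, together with the spectral theorem for the commuting normal pair $(U_1,U_2)$, whose joint spectrum lies in $\mathbb T^2$. Applying this with $g=f\circ\theta$ gives
\[
\|f(\mathbf T)\|\le \sup_{z\in\overline{\mathbb D}^{\,2}}|f(\theta(z))|=\sup_{w\in\overline{\mathbf{\Theta}}_2}|f(w)|,
\]
and the equality uses $\theta(\overline{\mathbb D}^{\,2})=\overline{\mathbf{\Theta}}_2$.

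The spectral inclusion $\sigma(\mathbf T)\subseteq\overline{\mathbf{\Theta}}_2$ is handled inside Theorem~\ref{Theta_n Contraction Char} via polynomial convexity. Alternatively, it follows directly from the spectral mapping theorem, because $\sigma(T_1,T_2)\subseteq\overline{\mathbb D}^{\,2}$.

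The only nontrivial ingredient is Andô's theorem. This is also why the result is special to $n=2$: for three or more commuting contractions von Neumann's inequality can fail, so the same argument would not yield $\mathbf{\Theta}_n$-contractions in general. I expect no other obstacle beyond checking that the composition $f\circ\theta$ is again a polynomial. This holds because $q=m/p$ is a positive integer, since $p$ divides $m$.
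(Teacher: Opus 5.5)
Your proposal is correct and is essentially the paper's argument. You compose with $\pi(z_1,z_2)=(z_1^m+z_2^m,(z_1z_2)^{m/p})$, apply the two-variable inequality $\|g(T_1,T_2)\|\le\sup_{\overline{\mathbb D}^2}|g|$ for commuting contractions, and use $\pi(\overline{\mathbb D}^2)\subseteq\overline{\mathbf{\Theta}}_2$ with the polynomial characterization of $\mathbf{\Theta}_n$-contractions; your attribution of that inequality to And\^o's dilation theorem is more accurate than the paper's appeal to ``von Neumann's inequality.''
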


\begin{proof}
Define the map $\pi:\mathbb{C}^2\to\mathbb{C}^2$ by
\[
\pi(z_1,z_2)=\bigl(z_1^m+z_2^m,(z_1z_2)^{m/p}\bigr).
\]
Observe that
$
\pi(\overline{\mathbb{D}}^2)\subseteq\overline{\mathbf{\Theta}}_2.$
Hence, for every polynomial $p\in\mathbb{C}[z_1,z_2]$, we have
\[
\begin{aligned}
\|p(T_1^m+T_2^m,(T_1T_2)^{m/p})\|
&=\|(p\circ\pi)(T_1,T_2)\|\\
&\le \|p\circ\pi\|_{\infty,\overline{\mathbb{D}}^2}
\qquad\text{(by von Neumann's inequality)}\\
&=\|p\|_{\infty,\pi(\overline{\mathbb{D}}^2)}\\
&\le \|p\|_{\infty,\overline{\mathbf{\Theta}}_2}.
\end{aligned}
\]
Therefore, $\mathbf{T}$ is a $\mathbf{\Theta}_2$-contraction. This completes the proof.
\end{proof}
The next theorem shows that the $\mathbf{\Theta}_2$-contraction constructed in Theorem~\ref{Thm 9} always admits a $\mathbf{\Theta}_2$-isometric dilation.
\begin{thm}\label{Thm 10}
Let $(T_1,T_2)$ be a pair of commuting contractions on a Hilbert space $\mathcal{H}$. Then
\[
\mathbf{T}=(T_1^m+T_2^m,(T_1T_2)^{m/p})
\]
admits a $\mathbf{\Theta}_2$-isometric dilation.
\end{thm}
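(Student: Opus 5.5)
The plan is to use Andô's theorem. Since $(T_1,T_2)$ is a pair of commuting contractions on $\mathcal H$, Andô's theorem provides a Hilbert space $\mathcal K\supseteq\mathcal H$ and a pair of commuting isometries $(W_1,W_2)$ on $\mathcal K$ that form a joint isometric dilation of $(T_1,T_2)$. Replacing this by the co-isometric formulation, we may also arrange that $\mathcal H$ is co-invariant, so that $W_j^*|_{\mathcal H}=T_j^*$ for $j=1,2$. This is the form of the Andô dilation that matches Definition~\ref{Dilation}. Concretely, one applies Andô to the commuting contractions $(T_1^*,T_2^*)$ and passes to the compression to the smallest invariant subspace containing $\mathcal H$. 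Alternatively, one takes the standard Andô construction, which already gives $\mathcal H^\perp$ invariant under $W_1^*,W_2^*$.

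We then set
\[
\mathbf V=(V_1,V_2)=\bigl(W_1^m+W_2^m,(W_1W_2)^{m/p}\bigr).
\]
Since $\mathcal H$ is co-invariant for the commuting pair $(W_1,W_2)$, every polynomial satisfies $f(W_1,W_2)^*|_{\mathcal H}=f(T_1,T_2)^*$. In particular, $V_1^*|_{\mathcal H}=(T_1^m+T_2^m)^*$ and $V_2^*|_{\mathcal H}=((T_1T_2)^{m/p})^*$, which is condition (2) of Definition~\ref{Dilation}. Clearly $V_1,V_2$ commute. It remains to show that $\mathbf V$ is a $\mathbf{\Theta}_2$-isometry.

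For this we extend the commuting isometries $(W_1,W_2)$ to commuting unitaries $(U_1,U_2)$ on a larger space $\widetilde{\mathcal K}\supseteq\mathcal K$, using the standard fact that any commuting family of isometries has a commuting unitary extension (It\^o/Douglas). Set
\[
\mathbf N=\bigl(U_1^m+U_2^m,(U_1U_2)^{m/p}\bigr).
\]
Taking $n=2$ in Theorem~\ref{Thm 1}(2) with the commuting unitaries $U_1^m,U_2^m$, we must check the form $N_1=U_1'+U_2'$ and $N_2=(U_1'U_2')^q$ with $q=m/p$. Some care is needed here. In the definition of $\theta$, we have $\theta_1=z_1^m+z_2^m$ and $\theta_2=(z_1z_2)^q$, so with $U_j'=U_j^m$ one has $(U_1'U_2')^q=(U_1U_2)^{mq}$, which does not match $(U_1U_2)^{q}$. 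The correct bookkeeping is therefore to verify condition (3) of Theorem~\ref{Thm 1} directly:
\begin{itemize}
\item $N_2$ is unitary, since it is a power of the unitary $U_1U_2$.
\item $\gamma_1N_1=\tfrac12(U_1^m+U_2^m)$ is a $\Gamma_1$-contraction, i.e.\ a contraction, which is clear.
\item $N_1=N_1^*N_2^p$, since $N_1^*N_2^p=(U_1^{*m}+U_2^{*m})(U_1U_2)^m=U_2^m+U_1^m$.
\end{itemize}
Hence $\mathbf N$ is a $\mathbf{\Theta}_2$-unitary. Since $\mathcal K$ is invariant under $U_1,U_2$, we get $V_i=N_i|_{\mathcal K}$, so $\mathbf V$ is a $\mathbf{\Theta}_2$-isometry by Definition~\ref{Defition 1}.

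A shortcut for the last step is to use Theorem~\ref{Thm 5}(4) instead of passing to unitaries. Theorem~\ref{Thm 9} applied to the commuting isometries $(W_1,W_2)$ shows that $\mathbf V$ is a $\mathbf{\Theta}_2$-contraction. Moreover, $V_2=(W_1W_2)^{m/p}$ is an isometry. Theorem~\ref{Thm 5} then gives that $\mathbf V$ is a $\mathbf{\Theta}_2$-isometry. This route avoids the unitary extension entirely, and I would present it as the main argument.

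The main obstacle is ensuring that the Andô dilation is taken in the co-invariant (lifting) form $W_j^*|_{\mathcal H}=T_j^*$, rather than merely a power dilation $P_{\mathcal H}W^{\alpha}|_{\mathcal H}=T^{\alpha}$. The power dilation alone would not give $V_i^*|_{\mathcal H}=T_i^*$ for the nonlinear expressions $T_1^m+T_2^m$ and $(T_1T_2)^{m/p}$. I would cite Andô's theorem in its standard form, which constructs commuting isometries on $\mathcal H\oplus\ell^2(\mathcal H\oplus\mathcal H\oplus\mathcal H\oplus\mathcal H)$ with $\mathcal H$ co-invariant. If necessary, I would remark that any isometric dilation restricted to $\overline{\operatorname{span}}\{W_1^{a}W_2^{b}h\}$ has $\mathcal H$ co-invariant, by the usual Sarason-type argument.
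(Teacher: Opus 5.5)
Your proof is correct and is essentially the paper's argument: take an And\^o isometric lifting $(W_1,W_2)$ of $(T_1,T_2)$, form $\mathbf V=(W_1^m+W_2^m,(W_1W_2)^{m/p})$, and apply Theorem~\ref{Thm 5}. The differences are minor. The paper checks condition (2) of that theorem directly (for commuting isometries $W_1^{*m}W_1^mW_2^m=W_2^m$ and $W_2^{*m}W_1^mW_2^m=W_1^m$, so $V_1=V_1^*V_2^p$), whereas your main route checks condition (4) via Theorem~\ref{Thm 9}; you also make explicit the co-invariance $W_j^*|_{\mathcal H}=T_j^*$, which the paper uses tacitly.

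One small correction to your side remarks: the standard And\^o construction makes $\mathcal K\ominus\mathcal H$ invariant under $W_1,W_2$ (not under $W_1^*,W_2^*$), and applying And\^o to $(T_1^*,T_2^*)$ yields a co-isometric extension of $(T_1,T_2)$ rather than the lifting you need. Your Sarason-type restriction argument already settles this point.
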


\begin{proof}
Since $(T_1,T_2)$ is a pair of commuting contractions, it admits an Ando isometric dilation. Let $(V_1,V_2)$ be an Ando isometric dilation of $(T_1,T_2)$. Define
\[
\mathbf{V}=(V_1^m+V_2^m,(V_1V_2)^{m/p}),
\]
and set
\[
\widetilde{V}_1=V_1^m+V_2^m,
\qquad
\widetilde{V}_2=(V_1V_2)^{m/p}.
\]
It is straightforward to verify that $\widetilde{V}_2$ is an isometry and
$
\widetilde{V}_1=\widetilde{V}_1^*\widetilde{V}_2^{\,p}.$
Moreover,
$
\frac{1}{2}\widetilde{V}_1$ is a $\mathbf{\Theta}_1$-contraction. Therefore, Theorem~\ref{Thm 5} implies that $\mathbf{V}$ is a $\mathbf{\Theta}_2$-isometry. Since $(V_1,V_2)$ is an isometric dilation of $(T_1,T_2)$, it follows that $\mathbf{V}$ is a $\mathbf{\Theta}_2$-isometric dilation of $\mathbf{T}$. This completes the proof.
\end{proof}
\begin{rem}\label{Rem 2}
Bhattacharyya, Pal, and Roy proved in \cite[Theorem~4.3]{Roy} that every $\Gamma$-contraction admits a $\Gamma$-isometric dilation. However, the constructions of the operators $V_i$ in Theorems~\ref{Conditional Dilation} and \ref{Re Conditional Dilation} do not establish that every $\mathbf{\Theta}_2$-contraction admits a $\mathbf{\Theta}_2$-isometric dilation when $m\ge2$ and $p\ge1$. Consequently, whether every $\mathbf{\Theta}_2$-contraction admits a $\mathbf{\Theta}_2$-isometric dilation remains an open problem. More generally, the following problem remains open.

\medskip

\noindent\textbf{Open Problem.}
Does every $\mathbf{\Theta}_n$-contraction admit a $\mathbf{\Theta}_n$-isometric dilation for arbitrary integers $n\ge2$, $m\ge2$, and $p\ge1$?
\end{rem}

	\noindent (A. Gupta) \sc{Department of Mathematics, IIT Bhilai, 6th Lane Road, Jevra, Chhattisgarh 491002}\\
	{E-mail address:} {aparnagupta@iitbhilai.ac.in}
	
	\vspace{.5cm}
	
	\noindent (A. Pal) \sc{Department of Mathematics, IIT Bhilai, 6th Lane Road, Jevra, Chhattisgarh 491002}\\
	{E-mail address:} {avijit@iitbhilai.ac.in}
	
	\vspace{.5cm}
	
	\noindent (B. Paul) \sc{Department of Mathematics, IIT Bhilai, 6th Lane Road, Jevra, Chhattisgarh 491002}\\
	{E-mail address:} {bhaskarpaul@iitbhilai.ac.in}  
	\end{document}